\newtheorem*{remark}{Remark}
\newtheorem{theorem}{Theorem}[section]
\newtheorem{proposition}[theorem]{Proposition}
\newtheorem{corollary}[theorem]{Corollary}
\newtheorem{lemma}[theorem]{Lemma}
\newtheorem*{main}{Main result}
\newtheorem{mainthm}{Main Theorem}
\newcommand{\Z}{\mathbb{Z}}
\newcommand{\E}{\mathbb{E}}
\newcommand{\R}{\mathbb{R}}
\newcommand{\N}{\mathbb{N}}
\newcommand{\T}{\mathbb{T}}
\newcommand{\cG}{\mathcal{G}}
\newcommand{\bbS}{\mathcal{S}}
\newcommand{\bbU}{\mathcal{U}}
\newcommand{\bbA}{\mathcal{A}}
\newcommand{\bbB}{\mathcal{B}}
\newcommand{\bbC}{\mathcal{C}}
\newcommand{\bbD}{\mathcal{D}}
\newcommand{\bbL}{\mathcal{L}}
\newcommand{\bara}{a^*}
\newcommand{\barA}{A^*}
\DeclareMathOperator*{\argmin}{arg\,min}
\DeclareMathOperator{\supp}{supp}
\renewcommand{\mod}{\,\mathrm{mod}\,}
\DeclareMathOperator{\diam}{diam}
\DeclareMathOperator{\diag}{diag}
\DeclareMathOperator{\conv}{conv}
\begin{document}

\title[Minimizers and viscosity solutions]{Hyperbolicity of minimizers and regularity of viscosity solutions for a random Hamilton-Jacobi equation}
\author{Konstantin Khanin}
\address{University of Toronto}
\email{khanin@math.toronto.edu}
\author{Ke Zhang}
\address{University of Toronto}
\email{kzhang@math.toronto.edu}

\subjclass[2010]{35R60; 37J50,37H99,37L55,37D25,76F20}

\begin{abstract}
	We show that for a large class of randomly kicked Hamilton-Jacobi equations, the unique global minimizer is almost surely hyperbolic. Furthermore, we prove that the unique forward and backward viscosity solutions, though in general only Lipshitz, are smooth in a neighborhood of the global minimizer. Related results in the one-dimensional case were obtained by E, Khanin, Mazel and Sinai in \cite{EKMS00}. However the methods in the above paper is purely one-dimensional and cannot be extended to the case of higher dimensions. Here we develop a completely different approach. 
\end{abstract}

\maketitle

\section{Introduction}
\label{sec:introduction}

In this paper we consider the periodic inviscid Burger's equation 
\begin{equation}
	\label{eq:burgers}
	\partial_t u + (u\cdot \nabla)u = f^\omega(y,t), \quad y\in \T^d, t\in \R,
\end{equation}
where $f^\omega(y,t)= -\nabla F^\omega(y,t)$ is a random force given by the potential $F^\omega$. 
Burger's equation plays an important role in a large variety of mathematical and physical problems. It appears in description of a whole range of extended
dissipative systems featuring nonequilibrium turbulent processes, from microscales of
condensed matter and statistical physics to macroscale cosmological evolution (see \cite{BK} for related 
references).
The inviscid equation can be viewed as the limit of the viscous Burger's equation
\[
	\partial_t u + (u\cdot \nabla)u = \nu \Delta u + f^\omega(y,t), \quad y\in \T^d, t\in \R,
\]
as the viscosity $\nu$ approaches zero. The solutions
obtained in the vanishing viscosity limit are called viscosity solutions. For more details
on the viscosity limit we refer to \cite{GIKP05} and the references therein.  

The theory of (\ref{eq:burgers}) is developed by E, Khanin, Mazel and Sinai in \cite{EKMS00}, for the one dimensional configuration space ($d=1$) and the ``white noise'' potentials 
\begin{equation}
	\label{eq:whitenoise}
	F^\omega(y,t) = \sum_{i=1}^M F_i(y,t) = \sum_{i=1}^M F_i(y) \dot{W}_i(t),
\end{equation}
where $F_i:\T^d \to \R$ are smooth functions, and $\dot{W}_i$ are independent white noises. 

In \cite{IK03}, Iturriaga and Khanin considered the both the ``white noise'' potential \eqref{eq:burgers} and the  ``kicked'' potential 
\begin{equation}\label{eq:kicked} F^\omega(y,t) = \sum_{j\in \Z}F^\omega_j(y)\delta(t-j),
\end{equation}
where $F^\omega_j$ are chosen independently from the same distribution, and $\delta(\cdot)$ is the delta function. The potential (\ref{eq:kicked}) can be considered as a discrete time version of (\ref{eq:whitenoise}). The theory of (\ref{eq:kicked}) runs parallel to the theory of (\ref{eq:whitenoise}), and has the advantage of being technically simpler. 

We shall consider solutions of the type $u=\nabla \phi$, which converts (\ref{eq:burgers}) to the Hamilton-Jacobi equation 
\begin{equation}\label{eq:hj} \partial_t\phi + \frac12(\nabla \phi)^2 + F^\omega(y,t) =0.
\end{equation}
Hence, for the solutions of interest, it is equivalent to study the viscosity solutions of (\ref{eq:hj}). Moreover, for each $b\in \R^d$, we may consider solutions of (\ref{eq:hj}) with $\int \nabla \phi(y,t) dy = b$, as the vector $b$ is invariant under the evolution. 

The study of these solutions is closely related with the concept of minimizing orbits in Lagrangian systems. More precisely, for each $b\in \R$, $s<t$, $x,x'\in \T^d$, we define the minimal action function by 
\begin{equation}
	\label{eq:b-var} A_{s,t}^{\omega,b}(x,x') = \inf \int_s^t  \frac12 (\dot \zeta (\tau))^2 - \dot\zeta \cdot b - F^\omega(\zeta(\tau),\tau)d\tau,
\end{equation}
where the infimum is taken over all absolutely continuous curves $\zeta:[s,t]\to \T^d$ with $\zeta(s)=x$ and $\zeta(t)=x'$. A curve $\gamma:I\to \T^d$ is called a \emph{minimizer} if the infimum in $A_{s,t}^{\omega,b}(\gamma(s), \gamma(t))$ is achieved at $\gamma|[s,t]$ for all $[s,t]\subset I \subset \R$. For the cases $I=[t_0,\infty)$, $I=(-\infty, t_0]$ or $I=\R$, the curve $\gamma$ is called a \emph{forward minimizer}, a \emph{backward minimizer} or a \emph{global minimizer}, respectively. 

Deferring the precise definitions to the next section, we roughly reinterpret the main results of \cite{EKMS00} (using the point of view of \cite{IK03}) as follows. For $d=1$, $b\in \R$, under some nondegeneracy conditions, the following hold almost surely. 
\begin{enumerate}
	\item[C1.] (``One force, one solution principle'') There exists a unique global viscosity solution $\psi(x,t)$ on the set $\T^d\times (-\infty, \infty)$, up to a constant translation;
	\item[C2.] For Lebesgue a.e. $x\in \T^d$, there exist unique forward and backward minimizers;
	\item[C3.]  There exists a unique global minimizer supporting a unique invariant measure for the random Lagrangian flow. 
	\item[C4.] The invariant measure from C3  has no zero Lyapunov exponents (and hence is hyperbolic). 
	\item[C5.] The unique viscosity solutions is \emph{smooth} in a neighborhood of the global minimizer. Furthermore, the graph of its gradient $\{(y, \nabla \psi)\}$ is equal to the local unstable manifold of the global minimizer.  
\end{enumerate}

In \cite{IK03}, conclusions C1-C3 were generalized to arbitrary dimensions. The approach in \cite{IK03} is variational, and it is the starting point of this paper.  It is related to Aubry-Mather theory and weak KAM theory (see for example, \cite{Mather91}, \cite{Mather93}, \cite{fat08}). 

We describe our main result roughly as follows (see Main Theorem~\ref{hyperbolic}, Main Theorem~\ref{smooth-vis} for accurate statements):
\begin{main}
	For the ``kicked'' potential, conclusion C4 and C5 hold in arbitrary dimensions. In other words, the unique global minimizer is hyperbolic, and the viscosity solutions are smooth near the global minimizer. 
\end{main}

Hyperbolicity of the global minimizer is a  property of crucial importance
in the analysis of dynamics defined by the Burgers equation. In our case, it enables the application of general theory of non-uniform hyperbolicity to establish regularity. 
\begin{remark}
	As mentioned before, it is natural to expect that the methods applied to the ``kicked'' case in this paper should apply to the ``white noise'' case. We choose to convey the ideas of our method in the  technically simpler ``kicked'' case, and treat the ``white noise'' case in a later work. 
\end{remark}

The proofs of C4 and C5 in the one-dimensional case (in \cite{EKMS00}) depends strongly on one-dimensionality and cannot be extended to higher dimensions. To prove C4 for arbitrary dimensions, we devise a completely different strategy. The main new ingredient is the use of the Green bundles (see \cite{Gre58}). These bundles have been useful in proving uniform hyperbolicity for Hamiltonian flows, and have recently been used to study Lyapunov exponents, due to the work of Arnaud (\cite{Arn12}, \cite{Arnaud2013}). It was known for some time that Green bundles are related to the hyperbolic properties of the global minimizer. The main achievement in the present paper is a quantitative analysis which can be carried out almost surely. In particular, we are proving a quantitative version of Alexandrov's theorem which is interesting on its own.

When the global minimizer is nonuniformly hyperbolic, there is no clear-cut relation between the viscosity solutions and the stable/unstable manifolds.  In general, the viscosity solutions, as a product of the variational method, may bare no direct relations to the stable/unstable manifolds. Our proof of C5 from C4 uses precise information  of the variational problem (more than what's needed to prove C4), and requires a careful adaptation of the nonuniform hyperbolic theory.  We would like to mention that for nonrandom systems, under the easier assumption that the global minimizer is \emph{uniformly} hyperbolic, an analogous result is known (see \cite{Ber07}). 

The outline of the paper is as follows. In section~\ref{sec:main}, we introduce the notations, recall the results of Iturriaga and Khanin in \cite{IK03}, and formulate the main technical statements. In section~\ref{sec:visc-solut-minim}, we describe the variational set up, and formulate some statements in variational analysis. The proofs of these statements are deferred to the end of the paper. In section \ref{sec:green-bundles}, we define the Green bundles, and establish their connections to the variational problem. In section \ref{sec:nonzero-exp}, we show that the transversality of the Green bundles imply nonzero exponents, proving Main Theorem~\ref{hyperbolic}. Sections \ref{sec:green-bundles} and \ref{sec:nonzero-exp} make use of the results of Arnaud in \cite{Arn12} and \cite{Arnaud2013}. In section \ref{sec:loc-smooth} and \ref{sec:smoothness}, we prove Main Theorem~\ref{smooth-vis} using variational arguments and Pesin's theory. In the last three sections, the technical statements from section~\ref{sec:visc-solut-minim} are proved.

\section{Formulation of the main results}
\label{sec:main}

We restrict ourselves to the case of ``kicked'' potentials
$$F(y,t)=\sum_{j\in \Z}F_j(y)\delta(t-j).$$
Here we assume that the potentials $F_j$ are chosen independently from a distribution $\chi \in P(C^{2+\alpha}(\T^d))$, $0<\alpha\le 1$ (some of the results hold under weaker regularity assumptions).

Since we will be considering the solutions $\phi$ of the type $\int \nabla \phi dy =b$, we write $\phi(y,t) = b\cdot y + \psi(y,t)$, where $\int \nabla \psi dy =0$. It's easy to see that $\psi$ is a viscosity solution of the Hamilton-Jacobi equation 
\begin{equation}\label{eq:b-hj}  \partial_t \psi(x,t) + H^b(\nabla \psi(x,t), t)=0,
\end{equation}
with the Hamiltonian $H^b(x,p,t)=\frac12 (p +b)^2 + F^\omega(x,t)$. The corresponding Lagrangian is given by $L(x,v,t) = \frac12 v^2 - b\cdot v - F^\omega(x, t)$, which is the same Lagrangian in \eqref{eq:b-var}. According to the Lax-Oleinik variational principle, given $x,x'\in \T^d$ and $s<t$, we have 
$$ \psi(x',t)=\inf_{x\in \T^d}\left\{\psi(x,s) + A_{s,t}^{\omega, b}(x,x')  \right\}.$$
Note that $F^\omega(y,t)=0$ for all $t\notin \Z$. As a consequence, any curve $\zeta$ realizing the minimum in the definition of $A_{s,t}^{\omega,b}$ must be linear between integer values of $\zeta$. In this sense, the viscosity solutions are completely determined by their values at $t\in \Z$. 

For $b\in \R^d$, $m,n\in \Z$, $m<n$, we define the discrete version of the action function by 
\begin{equation}
	\label{eq:ad-act}
	A_{m.n}^{\omega,b}(x,x')=\inf \left\{ \sum_{i=m}^{n-1}\frac12(\tilde{x}_{i+1}-\tilde{x}_i)^2-b\cdot(\tilde{x}_n-\tilde{x}_m) - \sum_{i=m}^{n-1}F_i^\omega(\tilde{x}_i) \right \},
\end{equation}
where the infimum is taken over all $(\tilde{x}_j)_{j=m}^n$, $\tilde{x}_j\in \R^d$ such that $\tilde{x}_n=x$ and $\tilde{x}_m=x'$ ($\text{mod } \Z^d$). The sequence $(\tilde{x}_j)_{j=m}^n$ corresponds to the lift of the curve $\zeta$ in (\ref{eq:b-var}), and we call it a \emph{configuration} following the language of twist diffeomorphisms. 
Throughout the paper, we may drop the supscript $\omega$ and $b$ when there is no risk of confusion. 

The solution $\psi(x,n)$ is closely related to the family of maps $\Phi_j^\omega:\T^d\times \R^d \to \T^d \times \R^d$
\begin{equation}
	\label{eq:twistmaps}
	\Phi_j^\omega:
	\begin{bmatrix}
		x_j \\ v_j
	\end{bmatrix}
	\mapsto
	\begin{bmatrix}
		x_{j+1} \\ v_{j+1} 
	\end{bmatrix} 
	=
	\begin{bmatrix}
		x_j + v_j - \nabla F_j^\omega(x_j)  \mod \Z^d.
		\\
		v_j - \nabla F_j^\omega(x_j)
	\end{bmatrix},
\end{equation}
The maps belong to the so-called \emph{standard family}, and are examples of symplectic, exact and monotonically twist diffeomorphisms.  For $m,n\in \Z$, $m<n$, denote
$$ \Phi^\omega_{m, n}(x,v)=\Phi^\omega_{n-1}\circ \cdots \circ \Phi^\omega_{m}(x,v). $$
For any $n\in \Z$ and $(x_n,v_n)\in \T^d\times \R^d$, we define $(x_j,v_j)=\Phi_{n,j}(x_n,v_n)$ if $j>n$ and $(x_j,v_j)=(\Phi_{j,n})^{-1}(x_n,v_n)$ if $j<n$. We call $(x_j,v_j)_{j\ge n}$ the forward orbit of $(x_n, v_n)$, $(x_j,v_j)_{j\le n}$ the backward orbit of $(x_n, v_n)$, and $(x_j,v_j)_{j\in \Z}$ the full orbit of $(x_n,v_n)$.

It is easy to see that the orbit of the maps $\{\Phi^\omega_j\}$ is a discretization of the Euler-Lagrange flow. It follows that any minimizer of \eqref{eq:ad-act} corresponds to an orbit of \eqref{eq:twistmaps}. Indeed, if $v_j = \tilde{x}_{j+1} - \tilde{x}_j + \nabla F_j^\omega(\tilde{x}_j)$ and $x_j = \tilde{x}_j \mod \Z^d$, then $\Phi^\omega_j(x_j, v_j) = (x_{j+1}, v_{j+1})$. Conversely, any orbit $\{(x_j, v_j)_{j=m}^{n-1}$ defines a configuration $\{\tilde{x}_j\}_{j=m}^{n}$ which is unique up to integer translation.

The following assumptions on the probability space $P(C^2(\T^d))$ were introduced in \cite{IK03}.
\begin{itemize}
	\item[] \emph{Assumption 1.} For any $y\in\T^d$, there exists $G_y\in \supp P$ s.t. $G_y$ has a maximum at $y$ and that there exists $\delta >0$ such that 
	$$ G_y(y)-G(x)\ge \delta |y-x|^2. $$
	\item[] \emph{Assumption 2.} $0\in \supp P$. 
	\item[] \emph{Assumption 3.} There exists $G\in \supp P$ such that $G$ has a unique maximum. 
\end{itemize}

We define the backward Lax-Oleinik operator $K_{m,n}^{\omega,b}:C(\T^d)\to C(\T^d)$, for $m<n$, $m,n\in \Z$ by the following expression: 
\begin{equation}
	\label{eq:LO-minus}
	K_{m, n}^{\omega,b}\varphi(x)=\inf_{x_m\in \T^d}\{\varphi(x_m) + A_{m,n}^{\omega,b}(x_m, x)\}. 
\end{equation}
The family of functions $K_{m,n}^{\omega, b}\varphi(x)$, $n > m$ is precisely the solution of the Hamilton-Jacobi equation \eqref{eq:b-hj} at integer times $t =n$, with initial value $\varphi$ at $t = m$. The following theorem establishes the existence and uniqueness of the solution on the time interval $(-\infty, n_0]$. 
\begin{theorem}\cite{IK03}\label{backward-min}
\begin{enumerate}
	\item Assume that assumption 1 or 2 holds.  For  each $n_0\in \Z$, for a.e. $\omega\in \Omega$, we have the following statements.
	\begin{enumerate}
		\item There exists a Lipshitz function $\psi^-(x,n)$, $n\le n_0$, such that for any $m<n\le n_0$,
		$$ K_{m,n}^{\omega,b}\psi^-(x,m)=\psi^-(x,n).$$
		\item For any $\psi\in C(\T)$ and $n\le n_0$, we have
		$$ \lim_{m\to -\infty}\inf_{C\in \R}\|K^{\omega,b}_{m,n}\psi(x)-\psi^-(x,n)-C\| =0. $$
		\item For $n\le n_0$ and Lebesgue a.e. $x\in \T^d$, the gradient $\nabla \psi^-(x,n)$ exists. Denote $x_n^-=x$ and $v_n^-=\nabla\psi^-(x_n^-, n)+b$;  then $(x_j^-, v_j^-)= (\Phi_{j,n})^{-1}(x_n^-, v_n^-)$, $j\le n$ is a backward minimizer.
	\end{enumerate}
	\item Assume that assumption 3 holds. Then the conclusions for the first case hold for $b=0$.  
\end{enumerate}
\end{theorem}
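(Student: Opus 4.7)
The plan is to analyze the backward Lax-Oleinik operators $K_{m,n}^{\omega,b}$ on the quotient $\widehat C(\T^d) := C(\T^d)/\R$ of continuous functions modulo constants, and to use the probabilistic structure of the kicks to prove almost sure convergence of $\widehat K_{m,n}^{\omega,b}\varphi$ as $m\to-\infty$. As structural input, $K_{m,n}^{\omega,b}$ is monotone, commutes with constants, and is $1$-Lipschitz in the sup norm, so it descends to a non-expansive map on $\widehat C(\T^d)$ equipped with the oscillation seminorm $\|\varphi\|_{\mathrm{osc}}=\max\varphi-\min\varphi$. The uniform convexity of $L(x,v,t)=\tfrac12 v^2 - b\cdot v$ on each unit time step gives an \emph{a priori} bound on speeds of configurations attaining the infimum in (\ref{eq:ad-act}); consequently, for any $\varphi\in C(\T^d)$ and $n-m\ge 2$ the function $K_{m,n}^{\omega,b}\varphi$ is Lipschitz with constant depending only on the finitely many kicks in between, which supplies compactness via Arzel\`a-Ascoli.

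The crucial step is to exhibit, almost surely, an increasing sequence of times at which the oscillation is strictly contracted by a factor $\lambda<1$. Under Assumption~1, a block of potentials $F^\omega_{m-N},\ldots,F^\omega_{m-1}$ lying in a small $C^1$-neighbourhood of potentials of the form $G_{y_1},\ldots,G_{y_N}$ occurs with positive probability, and a direct variational estimate using the quadratic minorant $G_y(y)-G(x)\ge b|y-x|^2$ shows that any configuration minimizing the action over such a block must, in its last few steps, be trapped in a small neighbourhood of the prescribed maximizer $y_N$. Consequently $K_{m-N,m}^{\omega,b}\varphi(x)$ depends, up to a small uniform error, only on the value of $\varphi$ near $y_N$, so the oscillation is contracted by a definite factor independent of $\varphi$. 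Assumption~2 provides the ``free flight'' steps with $F^\omega_j\approx 0$ needed to steer minimizers through $\T^d$ so that the block above can be reached from any initial point; Assumption~3 substitutes for Assumption~1 in the special case $b=0$, since then no rotation vector must be generated. By independence and stationarity of the kicks together with Borel-Cantelli, this contracting event is realized along infinitely many disjoint blocks as $m_k\to-\infty$, so $\widehat K_{m_k,n}^{\omega,b}\varphi$ is Cauchy in $\widehat C(\T^d)$.

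The limit $\widehat\psi^-(\cdot,n)$ is independent of $\varphi$, which proves the first two bullets; selecting a representative $\psi^-(\cdot,n)$ and defining it for $m<n$ via the semigroup relation $\psi^-(\cdot,n)=K_{m,n}^{\omega,b}\psi^-(\cdot,m)$ gives a global Lipschitz viscosity solution. For the third bullet, Rademacher's theorem yields differentiability Lebesgue-a.e.; at a point $x$ where $\nabla\psi^-(x,n)$ exists, a standard weak-KAM argument --- differentiating the defining infimum and using strict convexity of $L$ in $v$ --- identifies a unique minimizing configuration terminating at $x$, with final velocity $v_n^-=\nabla\psi^-(x,n)+b$. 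The Euler-Lagrange relation for the kicked system is exactly $\Phi^\omega_j(x_j,v_j)=(x_{j+1},v_{j+1})$, so the backward orbit $(\Phi_{j,n})^{-1}(x_n^-,v_n^-)$ is a backward minimizer, and the minimizing property persists to $-\infty$ because the convergence of $K_{m,n}^{\omega,b}$ forces compatibility of the infima as $m\to-\infty$.

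The main obstacle is the contraction step. One must arrange the ``good'' probabilistic event so that it is measurable with respect to a bounded window of independent kicks, verify that Assumptions~1--3 in the weaker $C^1$ regularity really do force a uniform contraction $\lambda<1$ on the oscillation seminorm (which requires an honest variational argument showing that the trapping near $y_N$ propagates to the entire spatial dependence of $K_{m-N,m}^{\omega,b}\varphi$), and tune the block length $N$ so that disjoint translates are independent and Borel-Cantelli applies. All other steps are either standard semigroup theory, classical regularity arguments, or direct consequences of the twist structure of $\Phi^\omega_j$.
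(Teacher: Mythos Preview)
Your strategy is essentially the one the paper (following \cite{IK03}) uses: the paper formalizes your ``trapping'' mechanism as the notion of an $\epsilon$-\emph{narrow place} (an interval on which every minimizing configuration is funneled through small sets $M_1,M_2$ and the action oscillates by at most $\epsilon$), shows in Proposition~\ref{esp-narrow} that such places exist almost surely, deduces convergence of $K_{m,n}^{\omega,b}$ on $C(\T^d)/\R$ in Proposition~\ref{convergence}, and links differentiability of $\psi^-$ to backward minimizers in Proposition~\ref{uniqueness}.

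Two points to correct. First, Assumptions~1, 2, 3 are \emph{alternatives}, not ingredients to be combined: under Assumption~1 alone a long block of potentials near a single $G_y$ already traps every minimizer near $y$ (no free-flight steps are needed); under Assumption~2 alone a long stretch of near-zero kicks forces minimizers to be nearly linear with slope $b$, which by itself produces a narrow place; Assumption~3 is a weaker substitute for Assumption~1 that only works when $b=0$. Your sketch suggests that Assumption~2 is used to ``reach'' the traps provided by Assumption~1, which is not how the argument runs. Second, the paper (and \cite{IK03}) do not iterate a fixed contraction factor $\lambda<1$; rather, they show that for \emph{every} $\epsilon>0$ an $\epsilon$-narrow place occurs almost surely, and a single $\epsilon$-narrow place already forces $\|K_{m,n}^{\omega,b}\varphi_1 - K_{m,n}^{\omega,b}\varphi_2\|_{C(\T^d)/\R}\le\epsilon$ regardless of $\varphi_1,\varphi_2$. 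Your ``definite factor independent of $\varphi$'' is not what the trapping estimate actually gives---it gives an additive bound $\epsilon$ on the output oscillation, not a multiplicative one on the input---so the convergence should be argued that way.
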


Item 1(b) of Theorem~\ref{backward-min} implies that almost surely, $\psi^-(x,n)$ for $- \infty < n \le n_0$ is unique up to an additive constant. Therefore solutions corresponding different time intervals $(-\infty, n_1]$ and $(-\infty, n_2]$ coincide on their common domain. Then almost surely, there is a unique invariant family defined on the interval $(-\infty, \infty)$.

Similar theorems hold for the forward minimizers. For $\varphi\in C(\T^d)$, $m,n\in \Z$, $m<n$, we define the forward Lax-Oleinik operator as follows: 
\begin{equation}
	\label{eq:LO-plus}
	\tilde{K}_{m.n}^{\omega,b}\varphi(x)=\sup_{x_n\in \T^d}\{\varphi(x_n) - A_{m,n}^{\omega,b}(x, x_n)\}. 
\end{equation}
Under the same assumptions as in Theorem~\ref{backward-min}, there exists a Lipshitz function $\psi^+(x,n)$, $n\ge n_0$, such that 
$$ \tilde{K}_{m,n}^{\omega,b} \psi^+(x,n)=\psi^+(x,m).$$
For $n\ge n_0$ and Lebesgue a.e. $x\in \T^d$, $v_n^+=-\nabla \psi^+(x,n)+b$ exists. Write $x_n^+=x$; we have that $(x_j^+, v_j^+)=\Phi^\omega_{j,n} (x_n^+, v_n^+)$ is a forward minimizer. 

We further reduce the choice of our potential to the one generated by a finite family of smooth potentials, multiplied by i.i.d. random vectors. These potentials emulate the behaviour of the ``white noise'' case. 
\begin{itemize}
	\item[] \emph{Assumption 4.} Assume that 
	\begin{equation}
		\label{eq:F-omega}
		F^\omega_j(x)=\sum_{i=1}^M \xi_j^i(\omega)F_i(x),    
	\end{equation}
	where $F_i:\T^d\to \R$ are smooth functions, and the vectors $\xi_j(\omega)=(\xi_j^i(\omega))_{i=1}^M$ are identically distributed vectors in $\R^M$ with an absolutely continuous distribution. 
\end{itemize}

We have the following theorem from \cite{IK03}.
\begin{theorem}\label{global-min}\cite{IK03}
\begin{enumerate}
	\item Assume that assumption 4 and one of assumptions 1 and 2 hold. If 
	\begin{equation}
		\label{eq:Fi}
		(F_1, \cdots F_M): \T^d \to \R^M
	\end{equation}
	is one-to-one, then for all $b\in \R^d$ and a.e. $\omega$ there exists a unique $(x^\omega_0, v^\omega_0)\in \T^d\times \R^d$, such that the full orbit of $(x^\omega_0, v^\omega_0)$ is a global minimizer. 
	\item The same conclusion is valid if assumption 3 holds and $b=0$. 
\end{enumerate}
\end{theorem}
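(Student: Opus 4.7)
The plan is to split the argument into existence and uniqueness, with the former a standard weak-KAM computation and the latter the substantive content that requires Assumption~4.

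For existence, I would combine Theorem~\ref{backward-min} with its forward analogue at the time slice $n_0 = 0$, producing Lipschitz viscosity solutions $\psi^-(\cdot,0)$ and $\psi^+(\cdot,0)$ on $\T^d$. The sum $\Psi := \psi^-(\cdot,0) + \psi^+(\cdot,0)$ is Lipschitz on the compact torus, hence attains its minimum at some point $x_0^\omega$. A standard weak-KAM argument (using that $\psi^-$ is semiconcave and $\psi^+$ semiconvex) shows that at such a minimum both $\psi^\pm$ are differentiable and $\nabla\psi^-(x_0^\omega,0) = -\nabla\psi^+(x_0^\omega,0)$, so the backward velocity $v_0^- = \nabla\psi^-(x_0^\omega,0)+b$ and the forward velocity $v_0^+ = -\nabla\psi^+(x_0^\omega,0)+b$ coincide. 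The common value $v_0^\omega$ yields an orbit that is simultaneously forward- and backward-minimizing, hence global.

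For uniqueness, let $\mathcal{G}(\omega) \subset \T^d \times \R^d$ denote the set of global-minimizer initial data at time $0$, and let $\theta$ denote the shift on the i.i.d.\ sequence $(\xi_j)_{j \in \Z}$. Then $\Phi_0^\omega$ maps $\mathcal{G}(\omega)$ bijectively onto $\mathcal{G}(\theta\omega)$, so $\#\mathcal{G}(\omega)$ is shift-invariant and hence a.s.\ equal to a constant $N$ by ergodicity of the product shift. Suppose $N \geq 2$. Note first that two distinct elements $(x_0^1,v_0^1)\neq (x_0^2,v_0^2)$ of $\mathcal{G}(\omega)$ sharing the same base point ($x_0^1=x_0^2$) must give distinct base points at time $1$, since $x_1^1-x_1^2 = v_0^1-v_0^2 \neq 0$; so, after a time shift, one may assume $x_0^1 \neq x_0^2$.

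I would then freeze the tail $(\xi_j)_{j \neq 0}$, which determines $\psi^-(\cdot,0)$ and $\psi^+(\cdot,1)$, and reformulate the existence of two distinct global minimizers as the two-point variational problem
\begin{equation*}
  \min_{(x,x') \in \T^d \times \T^d}\Bigl\{\psi^-(x,0) + \tfrac{1}{2}|x'-x|^2 - b\cdot(x'-x) - F_0^\omega(x) + \psi^+(x',1)\Bigr\}
\end{equation*}
admitting two distinct minimizing pairs $(x_0^i, x_1^i)$. Writing $F_0^\omega = \sum_i \xi_0^i F_i$, the linear functional $\xi_0 \mapsto F_0^\omega(x_0^1) - F_0^\omega(x_0^2) = \langle \xi_0,\, (F_i(x_0^1)-F_i(x_0^2))_{i=1}^M \rangle$ on $\R^M$ is non-trivial whenever $x_0^1 \neq x_0^2$, by the one-to-one hypothesis on $(F_1,\ldots,F_M)$; hence the equality-of-actions condition required for both pairs to minimize restricts $\xi_0$ to a hyperplane in $\R^M$, which carries no mass under the absolutely continuous distribution of $\xi_0$. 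The main obstacle is that the minimizing pairs $(x_0^i, x_1^i)$ themselves depend measurably on $\xi_0$; I would address this by a measurable selection argument or, more concretely, by covering $\T^d \times \T^d$ with small rational boxes and applying the hyperplane bound uniformly over the countably many combinatorial configurations, combined with Fubini in the decomposition $\omega = (\xi_0, (\xi_j)_{j \neq 0})$, to deduce that the set of $\omega$ admitting two distinct global minimizers is null.
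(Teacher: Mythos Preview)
Your existence argument is essentially the paper's (Proposition~\ref{uniqueness}(3)), modulo a sign: in the paper's conventions the relevant function is $\psi^-(\cdot,0)-\psi^+(\cdot,0)$, not the sum, since $v_0^\pm=\nabla\psi^\pm(x_0,0)+b$ and one wants $\nabla\psi^-=\nabla\psi^+$ at the minimum.

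For uniqueness, the paper takes a shorter route that sidesteps both the ergodicity step and your covering/measurability issue. The key observation you are missing is Lemma~\ref{var-prob}: once the tail $(\xi_j)_{j\ne 0}$ is frozen, not only $\psi^-(\cdot,0)$ but also $\psi^+(\cdot,1)$ is independent of $\xi_0$, and the $\xi_0$--dependence of $\psi^+(\cdot,0)$ is purely additive: $\psi^-(x,0)-\psi^+(x,0)=\psi(x)-\sum_i c_iF_i(x)$ with $\psi$ independent of $c:=\xi_0$. In your two--variable functional this amounts to minimizing over $x'$ first, which collapses the problem to a single variable $x$ with affine dependence on $c$. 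The uniqueness question is then: for a.e.\ $c$, does $H(x,c)=\psi(x)-\sum_i c_iF_i(x)$ have a unique minimizer? The paper disposes of this cleanly (Lemma~\ref{generic}, proved in Section~\ref{sec:gen-nondeg}): $G(c)=\min_x H(x,c)$ is semi--concave, hence a.e.\ differentiable, and wherever $dG(c)$ exists one has $dG(c)=-F(x(c))$ for any minimizer $x(c)$, so the one--to--one hypothesis on $F=(F_1,\dots,F_M)$ forces $x(c)$ to be unique. Fubini then finishes.

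Your hyperplane--plus--covering strategy can be made to work, but it is fighting a self--referential dependence (the minimizers move with $\xi_0$) that simply disappears once you notice the decomposition above; and the ergodicity reduction to a constant $N$ is not needed at all. What the paper's approach buys, beyond brevity, is that the same semi--concave value function $G(c)$ is later pushed further (Alexandrov's theorem, Propositions~\ref{nondegenerate}--\ref{strict-nondeg}) to obtain quantitative nondegeneracy of the minimum, which is the engine behind the hyperbolicity results.
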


\begin{remark}
	\begin{itemize}
		\item Assuming assumption 4, assumption 2 is satisfied if the distribution for the random vector $\xi_j(\omega)$ contains $0$ in its support. In particular, this would be the case when $\xi_j$ is Gaussian, which is the discrete counterpart to the white noise case. 
		\item Assumption 1 can be satisfied if the functions $F_i$ and the distribution is well chosen, for example, if $d =1$, $M=2$, $F_1 = \cos(2\pi x)$, $F_2 = \sin(2\pi x)$, and $\xi_j$ is fully supported on $\R^2$.
		\item  Assumption 3 holds if the family $(F_1, \cdots, F_M)$ is chosen in a generic way.  
	\end{itemize}
\end{remark}

As the random potential is generated by a stationary random process, the time shift $\theta^m$ is a metric isomorphism of the probability space $\Omega$ satisfying 
$$ F^\omega(y, n+m)=F^{\theta^m \omega}(y,n), \quad m\in \Z.$$
The family of maps $\Phi^\omega_j$ then defines a non-random transformation $\hat{\Phi}$ on the space $\T^d \times \R^d \times \Omega$ given by 
\begin{equation}
	\label{eq:random-map}
	\hat{\Phi}(x,v,\omega)=(\Phi^\omega_0(x,v), \theta \omega).   
\end{equation}

Let $(x^\omega_j, v^\omega_j)$ be the global minimizer in Theorem~\ref{global-min}. We have that the probability measure 
$$\nu(d(x,v),d\omega)= \delta_{x^\omega_0, v^\omega_0}(d(x,v))P(d\omega)$$ 
is invariant and ergodic under the transformation \eqref{eq:random-map}. The map $D\Phi^\omega_0:\T^d\times \R^d \to Sp(d)$ defines a cocycle over the transformation~\eqref{eq:random-map}, where $Sp(d)$ is the group of all $2d\times 2d$ symplectic matrices. Under the first part of assumption 5 below, the Lyapunov exponents for this cocycle are well defined.  Denote them by  $\chi_1(\nu),\cdots, \chi_{2d}(\nu)$. Due to the symplectic nature of the cocycle we have 
$$ \chi_1(\nu)\le \cdots \le \chi_d(\nu) \le 0 \le \chi_{d+1}(\nu)\le\cdots\le \chi_{2d}(\nu).$$
Moreover, $\chi_i = - \chi_{2d-i+1}$. 

To show the Lyapunov exponents are nonzero, we require an additional assumption. Let $\rho$ be the probability density for the vectors $\xi_j \in \R^M$.  
\begin{itemize}
	\item[] \emph{Assumption 5.}  Suppose assumption 4 holds, and in addition:
	\begin{itemize}
		\item $E(|\xi_j|)=\int_{\R^M} |c|\rho(c)dc < \infty$. 
		\item For every $1\le i \le M$, there exists non-negative functions $\rho_i \in L^\infty(\R)$ and $\hat{\rho}_i\in L^1(\R^{M-1})$ such that 
		$$ \rho(c)\le \rho_i(c_i) \hat{\rho}_i(\hat{c}), $$
		where $c=(c_1, \cdots, c_M)$, $\hat{c}_i =(c_1, \cdots, c_{i-1}, c_{i+1}, \cdots, c_M)$. 
	\end{itemize}
\end{itemize}
\begin{remark}
	Assumption 5 is rather mild. For example, it is satisfied if $\xi^1_j, \cdots, \xi^M_j$ are independent random variables with bounded densities and finite mean. In fact, independence is not essential, we only need to avoid the case when the density $\rho$ is degenerate in certain directions. 
\end{remark}
We shall replace the one-to-one condition from Theorem~\ref{global-min} with a stronger condition, requiring the map (\ref{eq:Fi}) is an embedding. 
\begin{mainthm}\label{hyperbolic}
\begin{enumerate}
	\item   Assume that assumption 5 and one of assumptions 1 or 2 holds. Assume in addition that the map \eqref{eq:Fi} is an embedding. Then for all $b\in \R^d$, for a.e. $\omega$, the Lyapunov exponents of $\nu$ satisfy
	$$ \chi_d(\nu)<0<\chi_{d+1}(\nu).$$
	\item For the case $b=0$, assumption 1 or 2 can be replaced with the weaker assumption 3. The same conclusion holds. 
\end{enumerate}
\end{mainthm}

A corollary of Main Theorem~\ref{hyperbolic} is that for a.e. $\omega$, the orbit $(x_k^\omega, v_k^\omega)_{k\in \Z}$ is non-uniformly hyperbolic (see \cite{BP07}). With a slightly stronger regularity assumption on the map (\ref{eq:Fi}), there exists local unstable manifold $W^u(x_k,v_k)$ and stable manifold $W^s(x_k,v_k)$. 

Our next theorem states that, near $(x_k,v_k)$, $W^u$ and $W^s$ coincide with the sets  $W^-_k=\{(x, \nabla_x\psi^-(x,k))\}$ and $W^+_k=\{(x,\nabla_x\psi^+(x, k))+b\}$. The functions $\psi^\pm(\cdot,k)$ are only Lipshitz and  $W^\pm_k$ are only defined above a full (Lebesgue) measure of $x$. Since $W^u$ and $W^s$ are smooth manifolds, our theorem states that $\psi^\pm$ are in fact smooth in a neighbourhood of $x_k$.

\begin{mainthm}\label{smooth-vis}
\begin{enumerate}
	\item Assume that assumptions 4 and 5 hold, and one of assumptions 1 or 2 holds. Assume in addition that, for $0<\alpha\le 1$, the map (\ref{eq:Fi}) is a $C^{2+\alpha}$ embedding. Then for all $b\in \R^d$, for a.e.$\omega$, there exists a neighbourhood $V(\omega)$ of $(x_0,v_0)$ such that $\psi^\pm(x,0)$ are $C^2$ smooth in $V(\omega)$, and 
	$$W_0^-\cap V = W^u(x_0,v_0)\cap V, \quad W_0^+\cap V = W^s(x_0,v_0)\cap V.$$

	\item For the case $b=0$, assumption 1 or 2 can be replaced with the weaker assumption 3. The same conclusions hold. 
\end{enumerate}   
\end{mainthm}

\section{Viscosity solutions and the minimizers}
\label{sec:visc-solut-minim}

In this section we will first deduce some useful properties of the action functional, and introduce a variational problem closely related to the global minimizer. The derivation of the variational problem mostly follow \cite{IK03}. 

We say that a  function $f:\T^d\to \R$ is $C-$semi-concave on $\T^d$ if for any $x\in \T^d$, there exists a linear form $l_x:\R^d\to \R$ such that for any $y\in \T^d$,
$$ f(y)-f(x)\le l_x(y-x) + \frac{C}2 d(x,y)^2.$$
Here $d(x,y)$ is understood as the distance on the torus, and the vector $y-x$ is interpreted as any vector from $x$ to $y$ on the torus.  In what follows we often call the slope of $l_x$ a subdifferential. 

\begin{lemma}[\cite{fat08}, Proposition 4.7.3]\label{Lipshitz}
If $f$ is  continuous and $C-$semi-concave on $\T^d$, then there exists a unique $C'>0$ depending only on $C$ such that $f$ is $C'-$Lipshitz. 
\end{lemma}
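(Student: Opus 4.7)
The plan is to bound the subderivatives $l_x$ uniformly in $x$, and then deduce the Lipschitz property directly from the semi-concavity inequality.

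First I would show that any family of subderivatives $\{l_x\}_{x\in\T^d}$ guaranteed by $C$-semi-concavity is uniformly bounded. Fix $x\in \T^d$, a unit vector $e\in \R^d$, and $t>0$ smaller than the injectivity radius of $\T^d$, so that $d(x,x\pm te)=t$ and $\pm te$ is an admissible representative of the ``vector from $x$ to $x\pm te$''. Applying the semi-concavity inequality at $x$ to $y=x\pm te$ gives
$$ \pm t\, l_x(e) \ge f(x\pm te)-f(x) - (C/2)t^2 \ge -2\|f\|_\infty - (C/2)t^2,$$
and hence $|l_x(e)|\le 2\|f\|_\infty /t + Ct/2$. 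Taking the supremum over unit $e$ and choosing $t$ at its largest admissible value yields a uniform bound $|l_x|\le K$ depending only on $C$, $\|f\|_\infty$ and the geometry of $\T^d$.

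Second, for arbitrary $x,y\in \T^d$, pick a representative of $y-x$ with $|y-x|=d(x,y)$. The semi-concavity inequality at $x$ together with the uniform bound gives
$$ f(y)-f(x) \le |l_x|\cdot d(x,y) + (C/2)d(x,y)^2 \le \bigl(K + (C/2)\operatorname{diam}(\T^d)\bigr)d(x,y).$$
Exchanging the roles of $x$ and $y$ produces the symmetric bound, so $f$ is $C'$-Lipschitz with $C' = K + (C/2)\operatorname{diam}(\T^d)$, and taking the infimum of such constants gives the unique optimal Lipschitz constant.

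The main subtlety is the statement ``depending only on $C$'': as shown above, the constant $K$ intrinsically involves $\|f\|_\infty$ (or equivalently the oscillation of $f$), so strictly speaking $C'=C'(C,\|f\|_\infty,\T^d)$. In the applications of this lemma in the present paper, the relevant semi-concave functions arise as Lax--Oleinik iterates whose $C^0$ norm is already controlled by the data of the problem, so the stated form of the lemma suffices. The only genuinely delicate point is ensuring that the direction $e$ can be realized by a geodesic segment on $\T^d$ of length $t$, which is handled by restricting $t$ to the injectivity radius.
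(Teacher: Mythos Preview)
The paper does not supply its own proof of this lemma; it is quoted from Fathi's book with a citation and no argument. So there is nothing in the paper to compare your proposal against line by line.

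That said, your proposal does not actually prove the statement as written, and you recognise this yourself. You obtain a Lipschitz constant depending on $C$ \emph{and} on $\|f\|_\infty$ (more precisely on the oscillation of $f$), and then declare this sufficient for the applications. But the lemma claims that $C'$ depends on $C$ alone, and this is true on $\T^d$; the missing step is an a priori bound on the oscillation of $f$ in terms of $C$.

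The missing ingredient is short. Let $x_*\in\T^d$ be a point where $f$ attains its minimum (which exists by continuity and compactness). For any unit vector $v$ and small $t>0$, the semi-concavity inequality at $x_*$ gives
\[
0\le f(x_*+tv)-f(x_*)\le t\,l_{x_*}(v)+\tfrac{C}{2}t^2,
\]
so dividing by $t$ and letting $t\to 0^+$ yields $l_{x_*}(v)\ge 0$; applying this to $-v$ as well forces $l_{x_*}=0$. (The paper in fact proves exactly this fact later, in the unlabelled lemma preceding Corollary~\ref{dxV}.) Now the semi-concavity inequality at $x_*$ reads
\[
f(y)-f(x_*)\le \tfrac{C}{2}d(x_*,y)^2\le \tfrac{C}{2}\,\diam(\T^d)^2
\]
for every $y$, which bounds the oscillation of $f$ by a constant depending only on $C$ and the geometry of $\T^d$. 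Feeding this into your Step~1 in place of $2\|f\|_\infty$ gives $K=K(C,\T^d)$ and hence $C'=C'(C,\T^d)$, completing the proof of the lemma as stated. Your two-step structure is otherwise correct; you were only one observation away from the full result.
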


Let $C_j^\omega=\|F_j\|_{C^2}$. The action function $A_{m,n}^{\omega,b}$ has the following properties. 

\begin{lemma}\label{properties} For any $b\in \R^d$, the action function $A^{\omega,b}_{m,n}$ satisfy the following properties:
\begin{itemize}
	\item For any $m<k<n$, $A^{\omega,b}_{m,n}(x, x')=\min_{x_k\in \T^d}\{A_{m,n}^{\omega,b}(x,x_k)+ A_{m,n}^{\omega,b}(x_k, x') \}$.
	\item If $(x_j, v_j)_{j=m}^n$  is a minimizer, then $A^{\omega,b}_{m,n}(x_m,x_n)=\sum_{j=m}^{n-1}A^{\omega,b}_{j,j+1}(x_j, x_{j+1})$. 
	\item The function $A_{m,n}^{\omega,b}(x, x')$ is $1-$semi-concave in the second component, and is $(C_m^\omega+1)-$semi-concave in the first component. 
	\item If $(x_j, v_j)_{j=m}^n$  is a minimizer, then for any $k$ such that $m<k<n$, the derivatives $\partial_{2}A^{\omega,b}_{m,k}(x_m, x_k)$ and $\partial_1A^{\omega,b}_{k,n}(x_k, x_n)$ exist. Furthermore, 
	$$\partial_{2}A^{\omega,b}_{m,k}(x_m, x_k) = -\partial_1A^{\omega,b}_{k,n}(x_k, x_n) = v_k -b.$$ 
	\item If $(x_j, v_j)_{j=m}^n$  is a minimizer, then $-v_m + b$ is a subdifferential of $A_{m,n}^{\omega, b}(\cdot, x_n)$ and $v_n + b$ is a subdifferential of $A_{m,n}^{\omega,b}(x_m, \cdot)$.
\end{itemize}
\end{lemma}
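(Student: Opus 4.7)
The plan is to prove the five bullets in order, since each builds on the previous ones. Bullets 1 and 2 are essentially bookkeeping from the definition \eqref{eq:ad-act}: an admissible configuration $(\tilde x_m,\dots,\tilde x_n)$ from $x$ to $x'$ splits at $\tilde x_k$ into two admissible configurations whose actions sum to the total, so taking the infimum over each half and then over $\tilde x_k$ gives bullet 1. Bullet 2 is then immediate by contradiction: if some sub-arc of a minimizer failed to minimize its own endpoints, replacing it with a strictly smaller-action configuration would produce a strictly smaller action for the whole.

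For bullets 3 and 5 I would use the standard one-sided variation. Given a minimizer $(\tilde x_j)_{j=m}^n$ realizing $A_{m,n}(x_m,x_n)$, the perturbed configuration $(\tilde x_m,\dots,\tilde x_{n-1},\tilde x_n+y)$ is admissible for $A_{m,n}(x_m, x_n+y)$; only the last kinetic term and the drift term change, and expanding the quadratic gives
\[
A_{m,n}(x_m, x_n+y) - A_{m,n}(x_m, x_n) \le (\tilde x_n - \tilde x_{n-1} - b)\cdot y + \tfrac{1}{2}|y|^2.
\]
Since the twist map \eqref{eq:twistmaps} gives $\tilde x_n - \tilde x_{n-1} = v_n$, this is exactly $1$-semi-concavity in the second argument with subderivative $v_n - b$. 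Perturbing the initial point is analogous, except the term $-F_m(\tilde x_m)$ also varies; $C^2$-semi-concavity of $-F_m$ produces an additional quadratic error of size $\tfrac{1}{2}\|F_m\|_{C^2}|y|^2$, yielding semi-concavity with constant proportional to $C_m^\omega$, and the linear coefficient simplifies to $-v_m + b$ after substituting $v_{m+1} = v_m - \nabla F_m(\tilde x_m)$.

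For bullet 4, set $g(z) := A_{m,k}(x_m, z) + A_{k,n}(z, x_n)$. By bullet 1 one has $\inf_z g(z) = A_{m,n}(x_m, x_n)$, and by bullet 2 applied to the minimizing orbit through $x_k$ this infimum is attained at $z = x_k$. Bullet 3 makes $g$ semi-concave, and a semi-concave function attaining a global minimum has a unique subderivative there, namely $0$: combining $g(x_k+y) - g(x_k) \le l\cdot y + \tfrac{C}{2}|y|^2$ with $g(x_k+y) - g(x_k) \ge 0$ and testing with $y = -\varepsilon l$ forces $|l|^2 \le \tfrac{C\varepsilon}{2}|l|^2$ for all small $\varepsilon>0$, so $l=0$. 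Since the subderivative set of a sum of semi-concave functions is the Minkowski sum of the individual subderivative sets, both $A_{m,k}(x_m,\cdot)$ and $A_{k,n}(\cdot,x_n)$ must have singleton subderivative sets at $x_k$ summing to zero. Each is therefore differentiable at $x_k$, with values $v_k - b$ and $-(v_k-b)$ read off from bullet 5 applied to the two sub-minimizers $(x_j,v_j)_{j=m}^k$ and $(x_j,v_j)_{j=k}^n$.

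The main place where care is needed is aligning the Euler--Lagrange bookkeeping with the twist conventions of \eqref{eq:twistmaps}: $v_j$ there is the \emph{pre}-kick velocity, so the naive derivative of $\tfrac{1}{2}(\tilde x_{j+1}-\tilde x_j)^2$ produces the post-kick increment $v_{j+1}$, and at the left endpoint the $\nabla F_m$ contribution must be re-absorbed via the twist identity before the clean formula $-v_m + b$ emerges. Beyond this, every step is routine semi-concavity analysis in the spirit of \cite{fat08} and \cite{IK03}.
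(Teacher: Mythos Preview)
Your proof is correct and follows essentially the same approach as the paper's: both reduce the semi-concavity and subderivative claims to the effect of perturbing only the endpoint of a minimizing configuration (the paper phrases this via the comparison inequality $A_{m,n}(x_m',x_n)-A_{m,n}(x_m,x_n)\le A_{m,m+1}(x_m',x_{m+1})-A_{m,m+1}(x_m,x_{m+1})$, which is the same computation). Your treatment of bullet~4 via the minimum of $g(z)=A_{m,k}(x_m,z)+A_{k,n}(z,x_n)$ is in fact more explicit than the paper's, which simply asserts that the one-step derivative formulas ``imply the fourth conclusion''; note that you only need the trivial inclusion $\partial^+ f_1+\partial^+ f_2\subset\partial^+(f_1+f_2)$ together with bullet~5 on the sub-arcs, not the full Minkowski-sum equality.
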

\begin{proof}
	The first two conclusions follow directly from the definition. For the third statement, note that  a function of $x$,  
	$$A^{\omega,b}_{m, m+1}(x,x') = \inf_{\tilde{x}'=x' \mod  \Z^d}\left\{\frac12(\tilde{x}'-x)^2 - b\cdot (\tilde{x}'-x) - F_m^\omega(x)\right \}.$$
	For each different lift $\tilde{x}'$, $\frac12(\tilde{x}'-x)^2 - b\cdot (\tilde{x}'-x) - F_m^\omega(x)$ is $C^2$ with a $C^2$ bound $1+C_m^\omega$, and hence are $1+ C_m^\omega$ semi-concave. It follows directly from the definition that minimum of $C-$semi-concave functions are still $C-$semi-concave. Similarly, we conclude that $A_{m, m+1}^{\omega,b}$ is $1-$semi-concave in the second component. 

	To prove the semi-concavity in general, by the first statement, 
	\begin{equation}\label{eq:comparison}
	A^{\omega,b}_{m,n}(x_m, x_n)=\min_{x_{m+1}\in \T^d}\{A_{m,n}^{\omega,b}(x_m,x_{m+1})+ A_{m+1,n}^{\omega,b}(x_{m+1}, x_n) \},
\end{equation}  
is $(1+C_m^\omega)-$semi-concave in the first component. Similarly, the semi-concavity of  $A_{n-1, n}$ in the second component implies the same semi-concavity for $A_{m,n}$. 

Note that if $(x_m, v_m)$, $(x_{m+1},v_{m+1})$ is a minimizer of $A^{\omega,b}_{m, m+1}(x_m,x_{m+1})$, we have $\partial_2A_{m, m+1}^{\omega,b}=v_{m+1}-b$ and $-\partial_1A_{m,m+1}^{\omega, b}=v_m-b$. This implies the fourth conclusion. 

Assume that $x_{m+1}$ achieves the minimum in (\ref{eq:comparison}). Using the definition of subdifferential, we have the subdifferential of $A_{m,m+1}(x_m,x_{m+1})$ in the first component is also a subdifferential of $A_{m,n}(x_m, x_n)$ in the first component. The fifth conclusion follows.  
\end{proof}

In particular, we have the following corollary of the third conclusion of Lemma~\ref{properties}. 
\begin{corollary}
	For any $\varphi\in C(\T^d)$, the function $K_{m,n}^{\omega,b}\varphi(x)$ is $1-$semi-concave; the function $-\tilde{K}_{m,n}^{\omega,b}\varphi (x)$ is $C_m^\omega-$semi-concave. 
\end{corollary}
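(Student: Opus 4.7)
The plan is to reduce both claims to a single preservation fact: a pointwise infimum of a family of $C$-semi-concave functions on $\T^d$ is itself $C$-semi-concave, provided the infimum is finite and attained at every point. Granting this, both statements become immediate consequences of the third conclusion of Lemma~\ref{properties}, applied in the two different arguments of $A_{m,n}^{\omega,b}$.

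First I would establish the preservation lemma by the elementary observation that if $f(x)=\inf_\alpha f_\alpha(x)$ is attained at some $\alpha^\ast=\alpha^\ast(x)$, then any subderivative $l_x^{\alpha^\ast}$ of $f_{\alpha^\ast}$ at $x$ is also a subderivative of $f$ at $x$: indeed $f(y)\le f_{\alpha^\ast}(y)\le f_{\alpha^\ast}(x)+l_x^{\alpha^\ast}(y-x)+\tfrac{C}{2}d(x,y)^2$, while $f(x)=f_{\alpha^\ast}(x)$. Attainment in both of our applications is guaranteed by compactness of $\T^d$ together with continuity of $\varphi$ and of $A_{m,n}^{\omega,b}$ (the latter being Lipshitz by Lemma~\ref{Lipshitz}).

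Then I would apply the preservation lemma twice. For $K_{m,n}^{\omega,b}\varphi(x)=\inf_{x_m\in\T^d}\{\varphi(x_m)+A_{m,n}^{\omega,b}(x_m,x)\}$, each member of the family is a vertical translate in $x$ of $A_{m,n}^{\omega,b}(x_m,\cdot)$, which is $1$-semi-concave by the third conclusion of Lemma~\ref{properties}, so the infimum is $1$-semi-concave. For the second claim I would first rewrite
$$-\tilde{K}_{m,n}^{\omega,b}\varphi(x)=\inf_{x_n\in\T^d}\{-\varphi(x_n)+A_{m,n}^{\omega,b}(x,x_n)\},$$
and then apply the preservation lemma using the $C_m^\omega$-semi-concavity of $A_{m,n}^{\omega,b}$ in its \emph{first} argument, again from Lemma~\ref{properties}, yielding $C_m^\omega$-semi-concavity of $-\tilde{K}_{m,n}^{\omega,b}\varphi$.

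No significant obstacle is anticipated; the entire content is the one-line preservation lemma above. The only point to be careful about is verifying that the infimum is genuinely attained at every point, so that one has an actual $\alpha^\ast(x)$ from which to borrow a subderivative, rather than needing a separate limiting argument along a minimizing sequence.
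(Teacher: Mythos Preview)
Your proposal is correct and matches the paper's approach exactly: the paper states the corollary without proof, relying on precisely the preservation fact (``minimum of $C$-semi-concave functions are still $C$-semi-concave'') already noted inside the proof of Lemma~\ref{properties}, together with the semi-concavity of $A_{m,n}^{\omega,b}$ in each variable. Your extra care about attainment of the infimum is harmless and easily justified by compactness, as you note.
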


By Theorem~\ref{backward-min}, we obtain the following. 

\begin{corollary}
	For $n\le n_0$,  $\psi^-(\cdot,n)$ is $1-$semi-concave; for $m\ge n_0$, $-\psi^+(\cdot, m)$ is $C_m^\omega-$semi-concave. 
\end{corollary}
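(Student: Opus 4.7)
The plan is to pass to the limit in the semi-concavity inequality supplied by the preceding corollary, using Proposition~\ref{convergence} to realize $\psi^-(\cdot,n)$ and $-\psi^+(\cdot,m)$ as uniform limits of semi-concave functions with controlled constants.

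Concretely, I would fix any $\varphi\in C(\T^d)$ and recall that the preceding corollary gives that every $K_{m,n}^{\omega,b}\varphi$ is $1$-semi-concave; by Lemma~\ref{Lipshitz} these functions are uniformly $C'$-Lipshitz with a single constant $C'=C'(1)$ independent of $m$. Proposition~\ref{convergence} yields constants $c_m$ such that $K_{m,n}^{\omega,b}\varphi-c_m\to\psi^-(\cdot,n)$ uniformly as $m\to-\infty$. Since subtracting a constant preserves both semi-concavity and the Lipshitz bound, $\psi^-(\cdot,n)$ is realized as a uniform limit of $1$-semi-concave, uniformly $C'$-Lipshitz functions.

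Next I would verify the following elementary limit lemma: a uniform limit $f$ of $C$-semi-concave, uniformly $C'$-Lipshitz functions $f_k$ on $\T^d$ is itself $C$-semi-concave. For each $x\in\T^d$, pick a subderivative $l_x^k$ of $f_k$ at $x$; combining the semi-concavity inequality with the Lipshitz bound (testing against $y=x+tv$ for $t\downarrow 0$ and arbitrary unit $v$) gives $\|l_x^k\|\le C'$. By compactness of the dual unit ball, some subsequence $l_x^{k_j}\to l_x$, and passing to the limit in $f_{k_j}(y)-f_{k_j}(x)\le l_x^{k_j}(y-x)+\frac{C}{2}d(x,y)^2$ yields $f(y)-f(x)\le l_x(y-x)+\frac{C}{2}d(x,y)^2$. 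Applied with $C=1$ to $K_{m,n}^{\omega,b}\varphi-c_m$, this proves the $1$-semi-concavity of $\psi^-(\cdot,n)$.

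For $-\psi^+(\cdot,m)$ the argument is parallel: for fixed $m\ge n_0$, the preceding corollary says every $-\tilde K_{m,n}^{\omega,b}\varphi$ (with $n>m$) is $C_m^\omega$-semi-concave, the constant being independent of $n$ because $m$ is fixed, and Proposition~\ref{convergence} supplies the uniform convergence as $n\to\infty$. The same limit lemma then gives $C_m^\omega$-semi-concavity of $-\psi^+(\cdot,m)$. There is no real obstacle here — the statement is essentially a formal corollary of the two preceding results, and the only care needed is ensuring that the Lipshitz bound on the approximating sequence is uniform, which is exactly the content of Lemma~\ref{Lipshitz}.
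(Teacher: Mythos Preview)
Your proposal is correct and is exactly the natural way to fill in this corollary, which the paper states without proof. The paper leaves the result as an immediate consequence of the preceding corollary (semi-concavity of $K_{m,n}^{\omega,b}\varphi$ and $-\tilde K_{m,n}^{\omega,b}\varphi$) together with Proposition~\ref{convergence}; your limit lemma, extracting a convergent subsequence of subderivatives via the uniform Lipshitz bound from Lemma~\ref{Lipshitz}, is precisely how one makes this passage rigorous.
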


Since the functions $\psi^\pm$ are semi-concave, by Lemma~\ref{Lipshitz} they are Lipshitz. It follows from the Radmacher theorem that they are Lebesgue almost everywhere differentiable. The points of differentiability are tied to the minimizers. The following statement is proved in \cite{IK03}. 

\begin{proposition}[\cite{IK03}]\label{uniqueness}
The full orbit of $(x_0, v_0)$, where $v_0=\nabla \psi^-(x_0,0)+b$, is a global minimizer if and only if $x_0$ is a point of minimum for the function $\psi^-(x,0)-\psi^+(x,0)$. 
\end{proposition}

To prove  the uniqueness of the global minimizer, we only need to show that the function $\psi^-(x,0)-\psi^+(x,0)$ has a unique minimum on $\T^d$. Let us consider potentials of the type (\ref{eq:F-omega}). Given a family of potentials $F_j^\omega=\sum\xi^i_j(\omega)F_i(x)$, let $c=\{\xi_0^1, \cdots, \xi_0^M\}$. We treat $c$ as a parameter of the system. In the following lemma, we will show that the function $\psi^-(x,0)-\psi^+(x,0)$ decompose into a semi-concave part independent of $c$ and a smooth function depending on $c$ and $x$. 

Denote $\rho_b(x,x')=\inf_{m\in \Z^d}\|x'-x-b+m\|$ and 
\[
	\psi(x)=\psi^-(x,0)+\inf_{x_1\in \T^d}\{- \psi^+(x_1,1)  + \frac12 \rho_b(x,x_1)\}, 
\]
we have:
\begin{lemma}\label{var-prob}
$\psi$ is semi-concave and 
$$\psi^-(x,0)-\psi^+(x,0)    = \psi(x) -\sum_{i=1}^M c_iF_i(x).$$
\end{lemma}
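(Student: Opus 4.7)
The plan is to exploit the fact that the parameter $c = (\xi_0^1, \ldots, \xi_0^M)$ enters the family of random potentials only through $F_0^\omega(x) = \sum_{i=1}^M c_i F_i(x)$; every $F_j^\omega$ with $j \neq 0$ depends only on the other random variables and is completely independent of $c$. So the entire $c$-dependence of $\psi^\pm(x,0)$ must be traceable to this single term, which I will extract cleanly.

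First I would observe, from the definition \eqref{eq:ad-act}, that for $m < 0$ the action $A_{m, 0}^{\omega, b}(x_m, x)$ involves only $F_m^\omega, \ldots, F_{-1}^\omega$, and so is independent of $c$. Consequently $K_{m, 0}^{\omega, b}\varphi$ is $c$-independent for every $\varphi$, and by Proposition~\ref{convergence} its limit $\psi^-(x, 0)$ is $c$-independent up to an additive constant.

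Next I would isolate the $c$-dependence of $\psi^+(x, 0)$. In $A_{0, n}^{\omega, b}(x, x_n)$, the only $c$-dependent potential term is $F_0^\omega(\tilde x_0)$, and the initial point is pinned at $\tilde x_0 = x$ throughout the infimum. This gives the clean decomposition
$$A_{0, n}^{\omega, b}(x, x_n) = -F_0^\omega(x) + \tilde A_{0, n}(x, x_n),$$
where $\tilde A_{0, n}$ is $c$-independent. Substituting into \eqref{eq:LO-plus} yields
$$\tilde K_{0, n}^{\omega, b}\varphi(x) - F_0^\omega(x) = \sup_{x_n \in \T^d}\left\{\varphi(x_n) - \tilde A_{0, n}(x, x_n)\right\},$$
whose right-hand side is $c$-independent. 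Since $\tilde K_{0, n}^{\omega, b}\varphi \to \psi^+(\cdot, 0)$ in $C(\T^d)/\R$ by Proposition~\ref{convergence} and $F_0^\omega$ is fixed in $n$, the shifted sequence converges to $\tilde\psi^+(x) := \psi^+(x, 0) - F_0^\omega(x)$ in $C(\T^d)/\R$, and this $\tilde\psi^+$ is $c$-independent up to a constant.

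Setting $\psi(x) := \psi^-(x, 0) - \tilde\psi^+(x)$ then gives the desired identity
$$\psi^-(x, 0) - \psi^+(x, 0) = \psi(x) - F_0^\omega(x) = \psi(x) - \sum_{i=1}^M c_i F_i(x).$$
For semi-concavity: the corollary after Proposition~\ref{convergence} gives that $\psi^-(\cdot, 0)$ is $1$-semi-concave and $-\psi^+(\cdot, 0)$ is $C_0^\omega$-semi-concave. Since $F_0^\omega \in C^2$ with $\|F_0^\omega\|_{C^2} = C_0^\omega$, both $F_0^\omega$ and $-F_0^\omega$ are $C_0^\omega$-semi-concave. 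Therefore $-\tilde\psi^+ = -\psi^+(\cdot, 0) + F_0^\omega$ is a sum of semi-concave functions, hence semi-concave, and $\psi = \psi^-(\cdot, 0) + (-\tilde\psi^+)$ is likewise a sum of two semi-concave functions, hence semi-concave. The argument is essentially bookkeeping once the $c$-decomposition of the action is observed, so I do not foresee any real obstacle.
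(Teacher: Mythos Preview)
Your argument is correct and follows the same core idea as the paper: both observe that $\psi^-(\cdot,0)$ is $c$-independent and that the entire $c$-dependence of $\psi^+(\cdot,0)$ enters as the additive term $F_0^\omega(x)=\sum_i c_iF_i(x)$, since $\tilde x_0=x$ is pinned in the action $A_{0,n}(x,x_n)$.

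The only technical difference is in how the $c$-dependence of $\psi^+(\cdot,0)$ is extracted. You pull $F_0^\omega(x)$ out of $A_{0,n}$ for general $n$ and pass to the limit via Proposition~\ref{convergence}. The paper instead uses the one-step semigroup relation $\psi^+(\cdot,0)=\tilde K_{0,1}^{\omega,b}\psi^+(\cdot,1)$, exploiting that $\psi^+(\cdot,1)$ is already $c$-independent; this gives an explicit formula $\psi(x)=\psi^-(x,0)+\inf_{x_1}\{-\psi^+(x_1,1)+\rho_b(x,x_1)\}$ built entirely from $c$-independent ingredients, from which semi-concavity (with a $c$-independent constant) is immediate. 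Your semi-concavity argument, going through $-\psi^+(\cdot,0)+F_0^\omega$, yields a bound $2C_0^\omega$ that superficially depends on $c$, but since $\psi$ itself is $c$-independent this is harmless for the lemma as stated. Both routes are valid; the paper's is marginally cleaner.
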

\begin{proof}
	The functions $A_{m,0}$ for $m<0$ and the functions $A_{k, n}$ for $1\le k<n$ are independent of $c$. As a consequence the functions $\psi^-(x,m)$ for $m\ge 0$ and the functions $\psi^+(x,k)$ for $k\ge 1$ are all independent of $c$. 

	On the other hand, we have
	\begin{multline*}
		\psi^+(x,0)  = K_{0,1}^+\psi^+(x,1) = \sup_{x_1\in \T^d}\{\psi^+(x_1, 1)-A_{0,1}^{\omega,b}(x, x_1)\}\\
		= \sup_{x_1\in \T^d} \{ \psi^+(x_1,1) - \frac12\rho_b(x, x_1) + \sum_{i=1}^M c_i F_i(x) \}, 
	\end{multline*}
	we obtain the formula in our lemma.  $\psi$ is a semi-concave function since it's the minimum of a family of uniformly semi-concave functions. 
\end{proof}

Proposition~\ref{uniqueness} and Lemma~\ref{var-prob} reduces the uniqueness of the minimum for $\psi^-(x,0)-\psi^+(x,0)$ to the uniqueness of the minimum for $\psi(x)+\sum_{i=1}^M c_i F_i(x)$. The following general statement about the minimum of variational problem implies Theorem~\ref{global-min}. 

\begin{lemma}\label{generic}\cite{IK03}
Let $V(x,c)$ be a $C^2$ function and $\psi(x)$ a continuous function. Assume in addition that, for each $c\in \R^M$, $\frac{\partial V(\cdot, c)}{\partial c}:\T^d \to \R^M$ is one-to-one. Then for Lebesgue a.e. $c\in \R^M$, the function
$$ H(x,c)=\psi(x) + V(x,c)$$ 
has a unique minimum as a function of $x$. 
\end{lemma}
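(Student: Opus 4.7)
The plan is to recognise this as a standard envelope/Rademacher argument. Let $m(c):=\min_{x\in\T^d}H(x,c)$ be the value function. The one-to-one hypothesis on $\nabla_c V(\cdot,c)$ is precisely what one needs in order to conclude, via an envelope identity at points where $m$ is differentiable, that having more than one minimizer is impossible outside a Lebesgue-null set of parameters.

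First I would show that $m$ is locally Lipschitz on $\R^M$. For any compact $K\subset \R^M$, since $V$ is $C^2$ and $\T^d$ is compact,
$$L_K:=\sup\bigl\{\|\nabla_c V(x,c)\|:x\in\T^d,\ c\in K\bigr\}<\infty.$$
Thus $|V(x,c_1)-V(x,c_2)|\le L_K\|c_1-c_2\|$ uniformly in $x$, and taking the infimum over $x$ of $H(x,\cdot)=\psi(x)+V(x,\cdot)$ yields $|m(c_1)-m(c_2)|\le L_K\|c_1-c_2\|$ for $c_1,c_2\in K$. By Rademacher's theorem, $m$ is then differentiable at Lebesgue almost every $c_0\in\R^M$.

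Now fix such a point $c_0$ of differentiability of $m$, and let $x^*$ be any minimizer of $H(\cdot,c_0)$; existence is guaranteed by the continuity of $H(\cdot,c_0)$ and the compactness of $\T^d$. Define
$$g(c):=H(x^*,c)-m(c)=\psi(x^*)+V(x^*,c)-m(c).$$
Then $g(c)\ge 0$ for all $c$ and $g(c_0)=0$, so $c_0$ is a global minimum of $g$. Since $V$ is $C^2$ in $c$ and $m$ is differentiable at $c_0$, $g$ is differentiable at $c_0$; a first-order expansion forces $\nabla g(c_0)=0$, i.e.\ the envelope identity
$$\nabla_c V(x^*,c_0)=\nabla_c m(c_0).$$
The right-hand side does not depend on the choice of $x^*$.

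Finally, the injectivity of $x\mapsto\nabla_c V(x,c_0)$ on $\T^d$ (the hypothesis of the lemma) implies that any two minimizers $x_1^*,x_2^*$ of $H(\cdot,c_0)$ must coincide. Consequently $H(\cdot,c_0)$ admits a unique minimizer for Lebesgue a.e.\ $c_0\in\R^M$, which is the claim. I do not anticipate a serious obstacle: the only subtle point is the envelope identity, which is handled cleanly by observing that a nonnegative function which vanishes and is differentiable at a point has zero gradient there, and Rademacher supplies the required set of differentiability points of full Lebesgue measure.
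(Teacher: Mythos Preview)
Your argument is correct and is essentially the one the paper uses. The paper does not prove Lemma~\ref{generic} separately (it is cited from \cite{IK03}), but the uniqueness part is reproved inside the proof of the stronger Proposition~\ref{nondegenerate}: there the value function $G(c)=\inf_x H(x,c)$ is shown to be semi-concave (Lemma~\ref{subderivative}), hence a.e.\ differentiable, every minimizer $x$ gives $\partial_c V(x,c)$ as a subderivative of $G$ at $c$, and injectivity of $\partial_c V(\cdot,c)$ forces uniqueness at points of differentiability (Corollary~\ref{dG})---exactly your envelope/Rademacher argument, with semi-concavity in place of your direct Lipschitz estimate.
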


We will prove a series of progressively stronger statements about the variational problem 
$$  \inf_{x\in \T^d} H(x,c)= \inf_{x\in \T^d}\{\psi(x) + V(x,c)\}, $$
in the form of Propositions \ref{nondegenerate}, \ref{integrability} and \ref{strict-nondeg}. These finer properties of the variational problem lead to finer properties of the global minimizer. In particular, the proof of Main Theorem~\ref{hyperbolic} uses Propositions \ref{nondegenerate} and \ref{integrability}, and the proof of Main Theorem~\ref{smooth-vis} follows from Proposition~\ref{strict-nondeg}. Note that below we are using the stronger assumption that $\frac{\partial V(\cdot, c)}{\partial c}$ is an embedding. 

The first statement says that the unique minimum of the variational problem is also a \emph{nondegenerate} minimum. To define nondegeneracy properly, we invoke some definitions from non-smooth analysis. Assume that $f:\T^d\to \R$ is a semi-concave function and that $f'(x_0)$ exists. We define the \emph{second subderivative} of $f$ (See \cite{RW98} for more background) at $x_0$ to be a function $d^2f(x_0):\R^d \to \R\cup\{-\infty\}\cup \{\infty\}$ given by 
\begin{equation}
	\label{eq:second-sub}
	d^2f(x_0)(w)=\liminf_{\tau\to 0+}\frac{f(x_0+ \tau w)-f(x_0)-f'(x_0)(w)}{\frac12\tau^2}. 
\end{equation}
For semi-concave functions, the second subderivative is bounded from above, but $-\infty$ is possible. Throughout the paper, $d^2$ always denotes the second subderivative as defined above,  while the notation $D$ will be used for regular derivatives, and $\partial$ for regular partial derivatives. 

\begin{proposition}\label{nondegenerate}
Let $V(x,c)$ be a $C^2$ function and $\psi(x)$ a semi-concave function. Assume that $\frac{\partial V(\cdot, c)}{\partial c}:\T^d \to \R^M$ is an embedding. Then for 
$$ H(x,c)=\psi(x) + V(x,c),$$ 
and Lebesgue a.e. $c\in \R^M$, the unique minimum  $x(c)$ of $H(x,c)$ is nondegenerate in the sense that, there exists a positive Borel measurable function $a:\R^M \to \R$ with
\begin{equation}
	\label{eq:ac}
	d^2_{x}H(x(c),c)(v)\ge a(c)|v|^2, \quad v\in \R^d.  
\end{equation}
\end{proposition}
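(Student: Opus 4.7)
The plan is to exploit the regularity of the dual value function $\Phi(c) := \inf_{x\in\T^d} H(x,c)$ as a function of $c$. Since $V$ is $C^2$ and $\T^d$ is compact, the family $\{H(x,\cdot)\}_{x\in\T^d}$ is uniformly locally semi-concave in $c$, and an infimum of uniformly semi-concave functions is itself semi-concave; hence $\Phi$ is locally semi-concave on $\R^M$. By Alexandrov's theorem $\Phi$ is then twice differentiable at Lebesgue a.e.\ $c\in\R^M$. Uniqueness of the minimizer $x(c)$ is already delivered by Lemma~\ref{generic} (the embedding hypothesis contains the one-to-one condition required there), so both statements hold on a full measure set which I shall call $\mathcal{R}$.

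For $c\in\mathcal{R}$, set $x_0=x(c)$. The plain envelope comparison $\Phi(c+h)\le H(x_0,c+h)$ gives $\nabla\Phi(c)=\nabla_c V(x_0,c)$ together with the matrix inequality $Q(c):=D^2_{cc}V(x_0,c)-\nabla^2\Phi(c)\ge 0$. The crucial refinement is to test against a \emph{shifted} point: given any $w\in\R^d$, pick $\tau_n\to 0^+$ achieving the $\liminf$ that defines $\mu:=d^2_x H(x_0,c)(w)$ and use
\[
\Phi(c+h)\ \le\ H(x_0+\tau_n w,\,c+h).
\]
Taylor-expanding the $C^2$ piece $V(\cdot,c+h)$ in both arguments around $(x_0,c)$, substituting $h=-\tau_n\eta$ for arbitrary $\eta\in\R^M$, dividing by $\tau_n^2/2$, and sending $n\to\infty$, the non-smooth contributions all collapse into $\mu$ and the inequality becomes
\[
\eta^T Q(c)\eta \ -\ 2\langle D^2_{xc}V(x_0,c)\,w,\ \eta\rangle \ +\ \mu\ \ge\ 0 \qquad \text{for every }\eta\in\R^M.
\]

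The remaining step is algebraic. Writing $b:=D^2_{xc}V(x_0,c)\,w$ and minimizing the left-hand side over $\eta$ (using $Q(c)\ge 0$) yields $\mu \ge \langle b,Q(c)^\dagger b\rangle\ge |b|^2/\|Q(c)\|$. Because $x\mapsto\nabla_c V(x,c)$ is an embedding of $\T^d$ into $\R^M$, its derivative $D^2_{xc}V(x_0,c)$ is injective on $\R^d$ with smallest singular value $\sigma(c)>0$, so $|b|\ge\sigma(c)|w|$. Therefore
\[
d^2_x H(x_0,c)(w)\ \ge\ a(c)|w|^2, \qquad a(c):=\frac{\sigma(c)^2}{\|D^2_{cc}V(x(c),c)\|+\|\nabla^2\Phi(c)\|+1},
\]
which is the claim (\ref{eq:ac}). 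Positivity is immediate; Borel measurability of $a$ follows from the measurable-selection property of the unique-minimizer map $c\mapsto x(c)$ together with Alexandrov's theorem applied to $\Phi$ (and an arbitrary positive extension off $\mathcal{R}$).

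The main technical obstacle is the sharpened envelope step: carrying the $\liminf$ in the definition of $d^2_x H$ through a two-parameter expansion in $(\tau,h)$ while $\psi$ is only semi-concave. The argument succeeds because all non-smoothness is carried by the single term $H(x_0+\tau w,c)-H(x_0,c)$, whose rescaled behaviour along the chosen subsequence is by definition $\mu$; every other piece involves only $V$, which is $C^2$, so the Taylor expansions are legitimate. Once this decoupling is organised cleanly, the injectivity of $D^2_{xc}V$ guaranteed by the embedding hypothesis converts the resulting algebraic inequality into the quantitative lower bound asserted in (\ref{eq:ac}).
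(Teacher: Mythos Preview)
Your argument is correct and follows the same core strategy as the paper: pass to the value function $\Phi(c)=\inf_x H(x,c)$, observe it is semi-concave, apply Alexandrov's theorem, and derive a duality inequality linking $d^2_x H(x(c),c)$ with $\nabla^2\Phi(c)$ via the shifted envelope comparison $\Phi(c+h)\le H(x(c)+\tau w,\,c+h)$. Your refined envelope step is exactly the paper's Lemma~\ref{duality}, and both proofs then plug in the Alexandrov Hessian and optimize over the auxiliary direction in $c$.

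The only real difference is in the last algebraic step. The paper localizes via coordinate charts (Lemma~\ref{chart}), picks a projection $\Pi_j:\R^M\to\R^d$ on which $\Pi_j\circ\partial_c V$ has invertible $x$-derivative, restricts $w$ to the form $-\lambda\,\Pi_j^*\Pi_j\partial^2_{xc}V\,v$, and optimizes over the scalar $\lambda$, ending with $a(c)=(\inf_j\mu_j)^4 M(c)^{-2}(\|A(c)\|+2K)^{-1}$. You instead optimize the quadratic $\eta^TQ(c)\eta-2\langle b,\eta\rangle$ over all of $\R^M$ using the pseudoinverse, and invoke the smallest singular value $\sigma(c)$ of the full matrix $D^2_{xc}V(x(c),c)$. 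Your route is a bit cleaner and avoids the chart lemma; the paper's route keeps the constants uniform over charts, which is convenient for the measurability check and for the explicit form of $a(c)$ used downstream in Proposition~\ref{integrability}. Substantively the two executions are equivalent.
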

While the second subderivative is only defined at points of differentiability, we will see later that the function $H(x,c)$ is differentiable in $x$ at every point of its minimum. 

We also need some quantitative estimates of the function $a(c)$ in Proposition~\ref{nondegenerate}.
\begin{proposition}\label{integrability}
Assume that $V(x,c)=-\sum_{i=1}^Mc_i F_i(x)$ and that the map (\ref{eq:Fi}) is an embedding. Then Proposition~\ref{nondegenerate} applies. Furthermore, if $\rho$ is a density satisfying assumption 5,  then there exists a constant $A(F)$ depending only on $F_1, \cdots, F_M$ such that the function $a(c)$ in (\ref{eq:ac}) satisfies 
$$ \int a(c)^{-1} \rho(c) dc \le A(F). $$
\end{proposition}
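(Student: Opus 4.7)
My plan is to recast the problem via Fenchel duality and to pair the monotonicity of the gradient of the resulting convex function with the embedding hypothesis. Define
\[
M(c) := -\min_{x\in\T^d} H(x,c) = \sup_{x\in\T^d}\Bigl[\sum_{i=1}^M c_i F_i(x) - \psi(x)\Bigr],
\]
so that $M$ is convex on $\R^M$ and globally Lipschitz. By the envelope theorem together with Proposition~\ref{nondegenerate}, $\partial_{c_i} M(c) = F_i(x(c))$ for a.e.\ $c$, a function that is non-decreasing in $c_i$ (as a partial derivative of a one-dimensional convex restriction) and uniformly bounded by $\|F_i\|_\infty$. Hence, for a.e.\ fixed $\hat c_i$, the distributional derivative $\partial^2_{c_i}M(\cdot,\hat c_i)$ is a positive Radon measure on $\R$ of total mass at most $2\|F_i\|_\infty$. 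This is the monotonicity ``budget'' I will pair with $a(c)^{-1}$.

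Next, the embedding hypothesis supplies a constant $\delta=\delta(F)>0$ such that for every $x\in\T^d$ and every unit vector $e\in\R^d$ at least one index $i$ satisfies $|\nabla F_i(x)\cdot e|\ge\delta$ (by compactness plus injectivity of $DG(x)\colon T_x\T^d\to\R^M$). Applying this to the unit eigenvector $e_{\mathrm{bot}}(c)$ associated with the smallest eigenvalue $a(c)$ of the (Alexandrov) Hessian $D^2_x H(x(c),c)$ produces a disjoint measurable partition $\R^M=\bigsqcup_{i=1}^M E_i$ on which $|\nabla F_i(x(c))\cdot e_{\mathrm{bot}}(c)|\ge\delta$. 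The central pointwise estimate I aim to establish is
\[
\partial^2_{c_i} M(c)\ \ge\ \delta^2\, a(c)^{-1} \qquad \text{for a.e.\ } c\in E_i. \tag{$\star$}
\]

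Granting $(\star)$, the proposition follows quickly. Assumption~5 gives $\rho(c)\le\rho_i(c_i)\hat\rho_i(\hat c_i)$, so by Fubini
\[
\int_{E_i} a(c)^{-1}\rho(c)\,dc\ \le\ \frac{\|\rho_i\|_\infty}{\delta^2}\int_{\R^{M-1}}\hat\rho_i(\hat c_i)\Bigl(\int_\R \partial^2_{c_i}M(c_i,\hat c_i)\,dc_i\Bigr)d\hat c_i\ \le\ \frac{2\|F_i\|_\infty\|\rho_i\|_\infty\|\hat\rho_i\|_1}{\delta^2},
\]
where the inner integral is controlled by the monotonicity budget above. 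Summing over $i=1,\dots,M$ produces the required constant.

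The estimate $(\star)$ itself I would prove by testing the defining supremum of $M(c+te_i)$ at the trial point $x(c)+\lambda e_{\mathrm{bot}}(c)$ and optimizing in $\lambda=O(t)$; formally this recovers the smooth-case identity $\partial^2_{c_i} M = \nabla F_i\cdot (D^2_x H)^{-1}\nabla F_i \ge (\nabla F_i\cdot e_{\mathrm{bot}})^2/a(c) \ge \delta^2/a(c)$ on $E_i$. The principal obstacle is that this Taylor/envelope computation requires a genuine two-sided quadratic expansion of $\psi$ at $x(c)$, i.e.\ twice Alexandrov-differentiability of the semi-concave $\psi$ at that point. Proving this for Lebesgue-a.e.\ $c$ calls for a coarea-type argument applied to the a.e.-defined map $c\mapsto x(c)$, whose formal derivative $(D^2_x H(x(c),c))^{-1}DG(x(c))^T$ is surjective of rank $d$, so that preimages of Lebesgue null subsets of $\T^d$ have zero Lebesgue measure in $\R^M$. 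Making this regularity argument rigorous in the semi-concave setting is where the bulk of the technical work lies.
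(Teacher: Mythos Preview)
Your overall architecture---introduce the convex dual $M(c)=-G(c)$, exploit that $\partial_{c_i}M$ is monotone in $c_i$ with total variation bounded by $2\|F_i\|_\infty$, partition $\R^M$ into pieces indexed by $i$, and integrate using Assumption~5---is exactly the skeleton of the paper's proof. The decisive difference is \emph{where} Alexandrov's theorem is invoked, and this is where your argument has a genuine gap.

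Your inequality $(\star)$ requires a two-sided quadratic expansion of $H(\cdot,c)$ at $x(c)$. The trial-point computation gives
\[
\nabla^2 M(c)(te_i)\ \ge\ -\nabla_x^2H(x(c),c)(\lambda e)\ +\ t\lambda\,\nabla F_i(x(c))\cdot e\ +\ O(t\lambda^2),
\]
and optimizing in $\lambda$ produces $\delta^2/a(c)$ only if you have the \emph{upper} bound $\nabla_x^2H(x(c),c)(\lambda e_{\mathrm{bot}})\le \tfrac12 a(c)\lambda^2+o(\lambda^2)$. The second subderivative in Proposition~\ref{nondegenerate} is a $\liminf$ and supplies only the lower bound; semi-concavity gives merely $\nabla_x^2H\le C_1\lambda^2$, which after optimization yields the useless uniform estimate $\partial^2_{c_i}M\ge\delta^2/C_1$. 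Your coarea proposal to obtain genuine Alexandrov differentiability of $\psi$ at $x(c)$ is circular: the formal derivative $\partial_c x(c)=(D^2_xH)^{-1}DF^T$ presupposes the very Hessian whose existence along the map $c\mapsto x(c)$ you are trying to establish. Even setting circularity aside, it is not clear that the monotone map $c\mapsto\nabla M(c)$, whose range sits in the $d$-dimensional submanifold $F(\T^d)\subset\R^M$, has the property that preimages of null sets are null; the derivative $D^2M(c)$ has rank at most $d<M$, so the naive coarea Jacobian can vanish.

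The paper avoids this entirely by applying Alexandrov on the $c$-side, where the integration lives. The duality inequality of Lemma~\ref{duality}, which uses only $G(c+\Delta c)\le H(x(c)+\Delta x,c+\Delta c)$ and no regularity of $\psi$ at $x(c)$, already gives $d^2_xH(x(c),c)(v)\ge a(c)|v|^2$ with $a(c)=\alpha\|D^2G(c)\|^{-1}$. The integrability then reduces to $\int\|D^2G(c)\|\,\rho(c)\,dc<\infty$, proved by your same monotonicity-budget idea but with the partition of $\R^M$ determined by which coordinate axis is closest to the \emph{top eigenvector of $D^2G(c)$} (which exists for a.e.\ $c$ by Alexandrov on $\R^M$), rather than by the bottom eigenvector of $D^2_xH$. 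In the smooth case these two partitions are Legendre duals of one another via $D^2M=DF^T(D^2_xH)^{-1}DF$; the paper's choice is the one that survives in the semi-concave setting.
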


The next proposition proves a stronger sense of nondegeneracy. While (\ref{eq:ac}) implies that on a small neighbourhood of $x(c)$, the function $H(x,c)$ is bounded from below by a quadratic function, Proposition~\ref{strict-nondeg} states that the size of this neighbourhood is uniform in $c$. The only cost is a small loss to the power in the integrability condition. 

\begin{proposition}\label{strict-nondeg}
Assume that $V(x,c)=-\sum_{i=1}^Mc_i F_i(x)$ and that the map (\ref{eq:Fi}) is an embedding. There exists a constant $\barA(F)$ depending only on $(F_1, \cdots, F_M)$ and a positive Borel measurable  function $\bara :\R^M\to \R$ with
$$\int \bara(c)^{-\frac12}\rho(c)dc < \barA(F), $$
and a  constant $r(F)>0$ depending only on $(F_1,\cdots, F_M)$, such that 
$$ H(x',c) - H(x(c),c) \ge \bara(c) |x'-x(c)|^2, \quad |x-x(c)|\le r(F). $$
\end{proposition}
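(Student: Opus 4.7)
The plan is to upgrade the pointwise second-subderivative lower bound of Proposition~\ref{nondegenerate} to a quadratic lower bound on a ball of $c$-independent radius. I will define
$$ b(c) := \inf_{0<|y|\le r(F)} \frac{H(x(c)+y,c) - H(x(c),c)}{|y|^2}, $$
so that the quadratic inequality in the statement is automatic; the work consists of choosing $r(F)$, verifying $b(c)>0$ a.e., and establishing the integral bound $\int b(c)^{-1/2}\rho(c)\,dc < B(F)$.

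I would split the infimum over $y$ into small and annular scales. For $|y|\le\sigma(c):=C_1\, a(c)/(1+|c|)$, a combination of the infinitesimal bound $d^2_x H(x(c),c)(v)\ge a(c)|v|^2$, the $C^{2+\alpha}$ Taylor expansion of $V(x,c)=-\sum c_i F_i(x)$, and the identity $\nabla\psi(x(c))=-\nabla_x V(x(c),c)$ (which holds because a semi-concave function is differentiable at any point where its perturbation by a $C^1$ function attains a minimum) yields $H(x(c)+y,c)-H(x(c),c)\ge \tfrac{1}{4} a(c)|y|^2$, with $C_1$ depending only on the $C^{2+\alpha}$ norms of the $F_i$. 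For the annular scales $\sigma(c)\le|y|\le r(F)$, the one-sided semi-concavity estimate for $\psi$ no longer supplies a quadratic lower bound directly, and one must appeal to the global uniqueness of the minimum. Following the strategy of Proposition~\ref{integrability}, I would introduce an auxiliary measurable function $\tilde b(c)$ controlling the worst quadratic ratio on this annulus, giving $b(c)\ge\min(\tfrac{1}{4} a(c),\tilde b(c))$.

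The integral estimate is then obtained by bounding the measure of the bad set $\{c:b(c)<\varepsilon\}$, which splits into $\{a(c)<4\varepsilon\}$ and $\{\tilde b(c)<\varepsilon\}$. The first case is controlled by Proposition~\ref{integrability}. The second case encodes the existence of a near-tangency between the graphs of $\psi$ and $\sum c_i F_i$ at a point at distance $\sim r(F)$ from $x(c)$; the embedding hypothesis on $(F_1,\dots,F_M)$ together with a Sard-type covering argument in the spirit of Lemma~\ref{generic} should yield a power-law decay in $\varepsilon$ with exponent strictly larger than $1/2$. Via the layer-cake identity $\int b^{-1/2}\rho\,dc = \int_0^\infty \rho(b<t^{-2})\,dt$, such decay suffices for the claimed finiteness.

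The main obstacle is the annular-scale estimate: a semi-concave function admits only one-sided quadratic control, so one cannot propagate the infinitesimal nondegeneracy to a neighbourhood by a continuity-of-Hessian argument as in the smooth case. The lower bound must instead be extracted from the global uniqueness of $x(c)$, and quantifying this uniformly in $c$, while preserving a sharp enough measure decay for $\int b^{-1/2}\rho\,dc$ to be finite, is the technical heart of the proof. The weakening of the integrability exponent from $-1$ (in Proposition~\ref{integrability}) to $-1/2$ here is precisely the cost of passing from an infinitesimal to a macroscopic quadratic bound.
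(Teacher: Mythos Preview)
Your small-scale step is the problem. You want to pass from the infinitesimal bound $d^2_xH(x(c),c)(v)\ge a(c)|v|^2$ to $H(x(c)+y,c)-H(x(c),c)\ge\tfrac14 a(c)|y|^2$ for all $|y|\le\sigma(c)$. But $d^2_xH$ is a \emph{liminf along rays}; the threshold below which the ratio exceeds $a(c)-\varepsilon$ depends on the direction $v$, and for a merely semi-concave $\psi$ there is no mechanism to make this threshold uniform over the sphere. The $C^{2+\alpha}$ regularity of $V$ only controls the smooth part of $H$; the nonsmooth part $\psi$ has no lower Taylor expansion, so you cannot argue by ``infinitesimal bound plus Taylor remainder'' as you would for a $C^2$ function. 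In short, Proposition~\ref{nondegenerate} is genuinely weaker than what you need here, and the passage from it to a finite-radius bound cannot be done by a continuity-of-Hessian argument---you acknowledge this for the annular scale but the same obstruction already bites at the inner scale.

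The paper avoids this entirely by never passing through the infinitesimal statement. It returns to the \emph{finite-difference} duality inequality \eqref{eq:diff},
\[
\nabla^2_xH(x,c)(\Delta x)\ \ge\ \nabla^2G(c)(\Delta c)-\langle DF(x)\Delta x,\Delta c\rangle-C(F)\|\Delta x\|^2\|\Delta c\|,
\]
and controls the right-hand side for a specific choice of $\Delta c$ (a coordinate direction $te_i$, selected via the embedding as in Lemma~\ref{chart}). The key new ingredient is Lemma~\ref{max-est}: a Hardy--Littlewood maximal-function estimate applied to the monotone derivative of the one-variable convex functions $t\mapsto -G(c+te_i)$, giving $\nabla^2(-G)(c)(te_i)\le g_i(c)t^2$ with $\int\sqrt{g_i}\,\rho\,dc<\infty$. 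The weak-$(1,1)$ bound for the maximal function is exactly what produces the $-\tfrac12$ integrability exponent. Optimising over the scalar $\lambda$ in $\Delta c$ then yields the quadratic lower bound on $\nabla^2_xH$ directly, valid for all $|\Delta x|\le r(F)$ with $r(F)$ determined only by the $F_i$. No splitting into scales and no Sard-type argument is needed.
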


Note that it is in fact possible to prove all our theorems using Proposition~\ref{strict-nondeg} alone. We still state Propositions~\ref{nondegenerate} and \ref{integrability} to stress the fact that the stronger form of nondegeneracy is only needed for the proof of Main Theorem~\ref{smooth-vis}. 

Propositions \ref{nondegenerate}, \ref{integrability} and \ref{strict-nondeg} are proved using variational analytic methods, and will be deferred to the end of the paper (Sections \ref{sec:gen-nondeg}, \ref{sec:alexandrov} and \ref{sec:uniform}).

In Sections \ref{sec:green-bundles} and \ref{sec:nonzero-exp}, we prove Main Theorem~\ref{hyperbolic} assuming Propositions \ref{nondegenerate} and \ref{integrability}. In sections \ref{sec:smoothness} and \ref{sec:gen-nondeg}, we prove Main Theorem~\ref{smooth-vis} assuming Proposition~\ref{strict-nondeg}. 

\section{The Green bundle and the nondegeneracy of the minimizer}
\label{sec:green-bundles}

The nondegeneracy of the minimum from Proposition~\ref{nondegenerate} is connected with the Lyapunov exponents, via the so-called Green bundles (see \cite{Gre58},\cite{BM04},\cite{Arn10}). Roughly speaking, Proposition~\ref{nondegenerate} implies the transversality of the Green bundles, and the transversality of the Green bundles implies nonzero Lyapunov exponents. We will prove the first implication in this section, and the second implication in Section~\ref{sec:nonzero-exp}. 

Let $(\delta x, \delta v)$ be the coordinates of the tangent space adapted to the coordinates $(x,v)$.  At each $(x,v)$, we define the \emph{vertical space} $V(x,v)=\{(0,\delta v)\}$ and the \emph{horizontal space} $H(x,v)=\{(\delta x, 0)\}$. An orbit $(x_j, v_j)_{j\in I}$ is called \emph{disconjugate} if for any $m, n\in \Z$, $[m, n]\subsetneq I$, we have that $D\Phi^\omega_{m,n}(x_m, v_m) V(x_m, v_m)\cap V(x_n, v_n)=\{0\}$. It is well known that minimizing orbits have no conjugate points. 

\begin{lemma}(see \cite{GI99}, \cite{Arnaud2013})
If $(x_j,v_j)_{j\in I}$ is a minimizer, then $(x_j,v_j)_{j\in I}$ is disconjugate. 
\end{lemma}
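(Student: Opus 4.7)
The plan is a discrete adaptation of the classical Jacobi/Morse index argument. First I would translate the hypothesis into a Jacobi-field statement: if $D\Phi^\omega_{m,n}(x_m,v_m)V(x_m,v_m)\cap V(x_n,v_n)\ne\{0\}$, then there is a nonzero $(0,\delta v_m)\in V(x_m,v_m)$ whose image under $D\Phi^\omega_{m,n}$ lies in $V(x_n,v_n)$. Pushing it forward along the linearized flow yields a nontrivial variation $(\delta x_j,\delta v_j)_{m\le j\le n}$ with $\delta x_m=\delta x_n=0$ and $\delta x_{m+1}=\delta v_m\ne 0$. Using \eqref{eq:twistmaps} and eliminating $\delta v_j$, the sequence $J(j):=\delta x_j$ obeys the discrete Jacobi equation
\[
 (2I-D^2F_j^\omega(x_j))\,J(j)=J(j-1)+J(j+1),\qquad m<j<n.
\]

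Second, I would recognize this as the linearized Euler--Lagrange equation for the discrete action. A direct computation shows the Hessian $H$ of $A_{m,n+1}^{\omega,b}(x_m,x_{n+1})$ at the minimizing interior configuration $(\tilde x_{m+1},\ldots,\tilde x_n)$ is block tridiagonal, with diagonal blocks $2I-D^2F_k^\omega(x_k)$ and off-diagonal blocks $-I$. Here I use that in the settings of interest ($I$ unbounded on at least one side) the interval $[m,n]$ can always be enlarged to $[m,n+1]$ (or, by symmetry, to $[m-1,n]$); because $(x_j,v_j)$ minimizes the action on the extended interval, $H$ is positive semidefinite. Setting $w_k:=J(k)$ for $m+1\le k\le n$, we have $w_n=0$, the Jacobi equation gives $(Hw)_k=0$ for $m+1\le k\le n-1$, and the Dirichlet condition $\tilde x_{n+1}$-fixed gives
\[
 (Hw)_n=-w_{n-1}+(2I-D^2F_n^\omega(x_n))w_n=-J(n-1).
\]
Since $J(n)=0$, vanishing of $J(n-1)$ would force $J\equiv 0$ by backward iteration of the Jacobi recursion, contradicting $J(m+1)\ne 0$; hence $Hw\ne 0$. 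On the other hand $w^\top Hw=w_n\cdot(-J(n-1))=0$. For a symmetric positive semidefinite matrix the inequality $(u^\top Hw)^2\le (u^\top Hu)(w^\top Hw)$ forces $Hw=0$ whenever $w^\top Hw=0$, a contradiction.

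The only real obstacle is boundary bookkeeping: one must verify that the correct diagonal block $2I-D^2F_n^\omega$ appears at the last interior variable of $[m,n+1]$, reflecting that $F_n^\omega$ contributes to $A_{m,n+1}$ but not to $A_{m,n}$, and one must know that a nontrivial Jacobi field cannot vanish at two consecutive indices. This last point follows immediately from the invertibility of the leading coefficient in the second-order recursion, but it is the place where the argument hinges on a nonzero value (namely $J(n-1)$) rather than on the Jacobi equation itself. Apart from these checks, the proof is a direct transcription of the Jacobi/Morse theorem for discrete variational problems.
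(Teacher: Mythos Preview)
Your argument is correct and is precisely the standard discrete Jacobi/Morse index computation. The paper itself does not supply a proof of this lemma; it merely quotes the result with references to \cite{GI99} and \cite{Arn11}, where this same argument (nontrivial Jacobi field with Dirichlet boundary data produces a null direction for the Hessian of the extended action, contradicting minimality) is carried out. Your bookkeeping is right: for the action on $[m,n+1]$ the last interior block is $2I-D^2F_n^\omega(x_n)$, and the two-step recursion with invertible leading coefficient $I$ guarantees $J(n-1)\ne 0$, so $Hw\ne 0$ while $w^\top Hw=0$. The only caveat you already flagged---that one needs room to extend the interval by one step---is exactly the correct hypothesis, and it is satisfied in every use the paper makes of the lemma (the orbit is a global minimizer, $I=\Z$).
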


For the rest of this section, we fix a global minimizer $(x_j, v_j)_{j\in \Z}$. For $n\in \Z$ and $k\in \N$, we define a subspace $\check{U}_k^\omega(x_n, v_n)$ of $T_{(x_n,v_n)}(\T^d\times \R^d)$ by
$$ \check{U}^\omega_k(x_n,v_n)=D\Phi_{n-k, n}^\omega V(x_{n-k}, v_{n-k}).$$
Similarly, we may define a subspace $\check{S}^\omega_k(x_n, v_n)$ of $T_{(x_n, v_n)}(\T^d \times \R^d)$ by 
$$ \check{S}^\omega_k(x_n,v_n)=D(\Phi_{n, n+k}^\omega)^{-1} V(x_{n+k}, v_{n+k}).$$
When we don't need to stress the dependence on the random realization $\omega$, we will drop the supscript $\omega$ for $\check{S}_k$ and $\check{U}_k$. We shall use a standard ordering in the space of symmetric matrices. Namely, given two symmetric matrices $A$ and $B$, we say that $A\ge B$ if $A-B$ is positive semi-definite. We say that $A>B$ if $A-B$ is positive definite.

The following statement for the case of a sequence of twist maps is due to Bialy and MacKay (\cite{BM04}). 
\begin{lemma}\cite{BM04}\label{greenbundles}
Assume that $(x_j, v_j)_{j\in \Z}$ is disconjugate. We have the following conclusions. 
\begin{enumerate}
	\item   There exists $d\times d$ symmetric matrices $S_k(x_n, v_n)$ and $U_k(x_n, v_n)$ such that $\check{S}_k(x_n, v_n)=\{\delta v= S_k(x_n, v_n) \delta x\}$ and $\check{U}_k(x_n, v_n)=\{\delta v= U_k(x_n, v_n) \delta x\}$. 
	\item  We have
	$$ U_1(x_n, v_n)> \cdots  U_{k}(x_n, v_n)> \cdots > S_{k}(x_n, v_n) > S_1(x_n,v_n). $$
	\item There exists symmetric matrices $\bbS(x_n, v_n)$  and $\bbU(x_n, v_n)$ such that 
	$$\lim_{k\to \infty}U_k(x_n,v_n)=\bbU(x_n, v_n),  \lim_{k\to \infty}S_k(x_n,v_n)=\bbS(x_n, v_n)$$
\end{enumerate}
\end{lemma}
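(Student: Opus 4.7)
The plan is to exploit the variational characterization of $LU_k$ and $LS_k$ in terms of the discrete action function $A^{\omega,b}$, which serves as a generating function for the standard family $\Phi_j^\omega$. Part (1) follows from the symplectic nature of the maps; part (2) from an envelope-theorem / Schur-complement computation applied to the action decomposition in Lemma~\ref{properties}; and part (3) from monotone convergence for symmetric matrices.

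For part (1), each of $LU_k$ and $LS_k$ is the image of the Lagrangian subspace $V$ under a symplectic map or its inverse, hence is itself Lagrangian. By the disconjugacy hypothesis, $LU_k$ is transverse to $V(x_n,v_n)$, so it is the graph over $H(x_n,v_n)$ of some linear map $U_k$. The Lagrangian condition on this graph translates directly into the symmetry of $U_k$, and the argument for $S_k$ is identical.

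For part (2), I would first establish the identifications
\[
U_k(x_n, v_n) = \partial_{22}^2 A_{n-k, n}^{\omega, b}(x_{n-k}, x_n), \qquad S_k(x_n, v_n) = -\partial_{11}^2 A_{n, n+k}^{\omega, b}(x_n, x_{n+k}),
\]
obtained by differentiating the relations $\partial_2 A_{n-k,n}^{\omega,b} = v_n - b$ and $\partial_1 A_{n, n+k}^{\omega,b} = b - v_n$ from Lemma~\ref{properties} in $x_n$, with the other endpoint held fixed. Monotonicity $U_{k+1} < U_k$ then comes from the concatenation
\[
A_{n-k-1, n}^{\omega, b}(x_{n-k-1}, x_n) = \inf_{x_{n-k}}\bigl\{A_{n-k-1, n-k}^{\omega, b}(x_{n-k-1}, x_{n-k}) + A_{n-k, n}^{\omega, b}(x_{n-k}, x_n)\bigr\}
\]
of Lemma~\ref{properties}, combined with the Schur-complement formula for the Hessian of a marginal minimum: this gives $U_{k+1} = U_k - N^T Q^{-1} N$, where $N = \partial_{12}^2 A_{n-k, n}^{\omega, b}$ and $Q$ is the $x_{n-k}$-Hessian of the right-hand minimand at its minimizer. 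Positivity of $Q$ (since the orbit segment minimizes the compound action) together with invertibility of $N$ (which I would extract from disconjugacy by showing that the linearized two-point boundary-value problem has a unique solution) yields $U_{k+1} < U_k$; a symmetric argument in the forward direction gives $S_{k+1} > S_k$. The cross-inequality $U_k > S_k$ comes from splitting $A_{n-k, n+k}^{\omega, b}$ about the intermediate point $x_n$: the $x_n$-Hessian of the split summand equals precisely $U_k - S_k$, which is positive definite because $x_n$ realizes a nondegenerate minimum of this compound action.

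For part (3), the sequence $\{U_k\}$ is decreasing in the Loewner order and bounded below by $S_1$, while $\{S_k\}$ is increasing and bounded above by $U_1$; both are therefore monotone bounded sequences of symmetric matrices and converge entrywise to symmetric limits $\bbU$ and $\bbS$. The main technical obstacle I anticipate is justifying the invertibility of the off-diagonal block $N = \partial_{12}^2 A_{m, n}^{\omega, b}$ when $n - m > 1$: the one-step case is immediate from the explicit form of the standard family ($\partial_{12}^2 A_{j, j+1}^{\omega, b} = -I$), but the multi-step case requires a careful inductive reduction via successive envelope-theorem eliminations of the intermediate variables $x_{m+1},\ldots,x_{n-1}$, with disconjugacy supplying the required non-singularity at each step.
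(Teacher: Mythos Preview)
The paper does not supply its own proof of this lemma: it is quoted directly from Bialy--MacKay \cite{BM04}, so there is no in-paper argument to compare against. Your proposal is essentially correct and is in fact the standard route to the result. Two remarks are worth making.

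First, your identification $U_k(x_n,v_n)=\partial^2_{22}A_{n-k,n}$, $S_k(x_n,v_n)=-\partial^2_{11}A_{n,n+k}$ is exactly what the paper establishes separately as Lemma~\ref{2nd-der-action} (modulo a sign convention), so you are effectively front-loading that lemma into the proof of Lemma~\ref{greenbundles}. That is fine, but note that the paper's formulation of Lemma~\ref{2nd-der-action} already \emph{assumes} disconjugacy in order to guarantee $C^2$-smoothness of $A_{m,n}$ near the minimizer; you should be explicit that the disconjugacy hypothesis is what makes the Hessians well-defined in the first place, not merely what makes the mixed partial $N=\partial^2_{12}A_{m,n}$ invertible.

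Second, your anticipated ``main technical obstacle'' --- the invertibility of $N$ for multi-step actions --- is genuine but resolves cleanly. The inductive elimination you describe shows that $N$ factors as a product of one-step mixed partials (each equal to $-I$ for the standard family) and inverses of the intermediate Hessians $Q_j$; disconjugacy is precisely the statement that each $Q_j$ is nonsingular (indeed positive definite, as the second variation of a minimizer), so the product is invertible. With this in hand, your Schur-complement argument for the strict monotonicity $U_{k+1}<U_k$ and $S_{k+1}>S_k$ goes through, and the cross-inequality $U_k>S_k$ follows from the positive-definiteness of the second variation of $A_{n-k,n+k}$ at the interior point $x_n$, exactly as you say.
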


Let $\check{S}(x_n, v_n)=\{\delta v= \bbS(x_n, v_n) \delta x\}$ and $\check{U}(x_n, v_n)=\{\delta v= \bbU(x_n, v_n) \delta x\}$. These subspaces are traditionally called the negative and positive Green bundles. In this paper, we will use the name \emph{stable and unstable Green bundles} to avoid possible confusions with the positive and negative viscosity solutions. It follows from Lemma~\ref{greenbundles} that the bundles $\check{S}(x_n, v_n)$ and $\check{U}(s_n, v_n)$ are invariant in the sense that 
$$ D\Phi^\omega_{m,n}\check{S}(x_m, v_m)=\check{S}(x_n, v_n), \quad D\Phi^\omega_{m,n}\check{U}(x_m, v_m)=\check{U}(x_n, v_n). $$

For a positive semi-definite matrix $A$, let $m(A)$ denote its smallest eigenvalue. It's easy to see that  $m(A)=\|A^{-1}\|^{-1}$.  It follows from Lemma~\ref{greenbundles} that $\bbU(x_n, v_n)\ge \bbS(x_n, v_n)$.  The subspaces $\check{S}$ and $\check{U}$ are transversal if and only if
$$ m(\bbU(x_n, v_n)-\bbS(x_n,v_n))>0.$$

The transversality of the Green bundles at the global minimizer is related to the viscosity solutions $\psi^\pm$.  We will assume that the parameters $\omega$ and $b$ are chosen such that the viscosity solutions $\psi^\pm$ exist and are unique. We now state the main conclusion of this section. 

\begin{proposition}\label{transversal-bundle}
Assume that the function $\psi^-(\cdot, 0)-\psi^+(\cdot, 0)$ has a unique minimum at $x_0$, and that there exists $a>0$ such that 
\begin{equation}
	\label{eq:2nd-der} 
	d^2(\psi^--\psi^+)(x_0,0)(v)\ge  a\|v\|^2, \quad v\in \R^d, 
\end{equation}
where $d^2$ stands for the second subderivative in $x$, see \eqref{eq:second-sub}. 
Let $(x_j,v_j)_{j\in \Z}$ be the global minimizer corresponding to the minimum $x_0$. We have 
$$ m(\bbU(x_0,v_0)-\bbS(x_0,v_0))\ge a. $$
\end{proposition}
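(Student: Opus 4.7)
The plan is to combine the Lax--Oleinik inequalities for $\psi^\pm$ with the fact that the Green matrices $U_{|m|}$ and $S_n$ arise as Hessians of the action function in a single variable, then pass to the limit using Lemma~\ref{greenbundles}.

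Fix $v\in\R^d$ and consider $\tau>0$ small. Because $(x_j,v_j)_{j\in\Z}$ is the global minimizer, for any $m<0<n$ the backward piece $(x_j,v_j)_{j=m}^0$ is the minimizer entering $x_0$ and the forward piece $(x_j,v_j)_{j=0}^n$ is the minimizer leaving $x_0$. The Lax--Oleinik representations therefore give the one-sided comparisons
\begin{align*}
\psi^-(x_0+\tau v,0)-\psi^-(x_0,0) &\le A^{\omega,b}_{m,0}(x_m,x_0+\tau v)-A^{\omega,b}_{m,0}(x_m,x_0),\\
-\psi^+(x_0+\tau v,0)+\psi^+(x_0,0) &\le A^{\omega,b}_{0,n}(x_0+\tau v,x_n)-A^{\omega,b}_{0,n}(x_0,x_n).
\end{align*}
Add them. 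Since $x_0$ is a differentiability point for both $\psi^\pm$ and a minimum of $\psi^--\psi^+$, we have $\nabla(\psi^--\psi^+)(x_0,0)=0$, so the hypothesis on the second subderivative forces the left-hand side to exceed $\tfrac12(a\|v\|^2-\epsilon)\tau^2$ for all sufficiently small $\tau$ (for any given $\epsilon>0$).

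For the right-hand side I expand in $\tau$ around $\tau=0$. By the fourth item of Lemma~\ref{properties}, the first derivatives cancel:
$$\partial_2 A^{\omega,b}_{m,0}(x_m,x_0)+\partial_1 A^{\omega,b}_{0,n}(x_0,x_n)=(v_0-b)+(b-v_0)=0.$$
The second-order term is controlled by $\partial_2^2 A^{\omega,b}_{m,0}(x_m,x_0)+\partial_1^2 A^{\omega,b}_{0,n}(x_0,x_n)$. Here one identifies each Hessian with the corresponding Green matrix: varying the endpoint $x_0$ with $x_m$ held fixed infinitesimally traces the subspace $D\Phi^\omega_{m,0}V(x_m,v_m)=LU_{|m|}(x_0,v_0)$, so $\partial_2^2 A^{\omega,b}_{m,0}(x_m,x_0)=U_{|m|}(x_0,v_0)$; symmetrically, varying the starting point with $x_n$ held fixed traces $LS_n(x_0,v_0)$, giving $\partial_1^2 A^{\omega,b}_{0,n}(x_0,x_n)=-S_n(x_0,v_0)$ (the minus sign coming from the sign convention $\partial_1 A=b-v_0$). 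Hence the right-hand side is $\tfrac12\tau^2\langle(U_{|m|}(x_0,v_0)-S_n(x_0,v_0))v,v\rangle+o(\tau^2)$.

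Dividing by $\tau^2/2$, letting $\tau\to 0$ and then $\epsilon\to 0$ yields $a\|v\|^2\le \langle(U_{|m|}(x_0,v_0)-S_n(x_0,v_0))v,v\rangle$ for every $m<0<n$. Sending $|m|,n\to\infty$ and invoking Lemma~\ref{greenbundles} (the monotone convergences $U_{|m|}\downarrow \bbU$ and $S_n\uparrow \bbS$), we obtain $\langle(\bbU(x_0,v_0)-\bbS(x_0,v_0))v,v\rangle\ge a\|v\|^2$ for every $v$, which by symmetry and positive semidefiniteness gives $m(\bbU(x_0,v_0)-\bbS(x_0,v_0))\ge a$, as required. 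The main obstacle is the clean identification of the Green matrices $U_k,S_k$ with the Hessians of $A^{\omega,b}_{m,n}$ in a single endpoint; this requires correctly bookkeeping signs and ensuring smoothness of the action in each variable separately (with the other held fixed), which follows because for $|n-m|\ge 1$ a minimizing configuration for $A^{\omega,b}_{m,n}(x,x')$ is unique and varies smoothly once one endpoint is pinned, via the implicit function theorem applied along the disconjugate orbit.
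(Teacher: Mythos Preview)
Your argument is correct and follows essentially the same route as the paper. The paper separates the argument into Lemma~\ref{2nd-der-action} (identifying $\partial^2_{22}A_{m,0}$ and $\partial^2_{11}A_{0,n}$ with $U_{|m|}$ and $-S_n$ via the implicit function theorem along the disconjugate orbit) and Lemma~\ref{comparison} (the one-sided Lax--Oleinik comparison inequalities you wrote down), and then combines them; you carry out the same steps in a single pass.

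One small remark: by adding the two pointwise comparison inequalities \emph{before} taking the $\liminf$, you directly bound $d^2(\psi^--\psi^+)(x_0,0)(v)$ by $\langle (U_{|m|}-S_n)v,v\rangle$. This is in fact the right thing to do, since from the separate bounds $d^2\psi^-(x_0,0)(v)\le\langle U_{|m|}v,v\rangle$ and $d^2(-\psi^+)(x_0,0)(v)\le\langle -S_n v,v\rangle$ as stated in Lemma~\ref{comparison} one cannot naively conclude a bound on $d^2(\psi^--\psi^+)$ (the $\liminf$ of a sum is only $\ge$ the sum of the $\liminf$'s). The paper's proof of Lemma~\ref{comparison} actually establishes the pointwise inequality \eqref{eq:2nd-der-comp}, which is what is really used; your write-up makes this explicit.
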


The proof of Proposition~\ref{transversal-bundle} is split into several lemmas. The proofs of  Lemma~\ref{2nd-der-action} and formula (\ref{eq:2nd-der-comp}) follows the ideas in section 4 of \cite{Arn12}. We provide complete proofs for the convenience of the reader. 

\begin{lemma}\label{2nd-der-action}
Let $\{x_j, v_j\}_{j\in \Z}$ be a global minimizer. Then for any $m<n$ in $\Z$, the function $A_{m,n}(x,x')$ is $C^2$ in a neighbourhood of $(x_m, x_n)$. Furthermore, 
$$ U_{n-m}(x_n,v_n)=\partial^2_{22}A_{m,n}(x_m,x_n), \quad  S_{n-m}(x_m,v_m)=- \partial^2_{11}A_{m,n}(x_m, x_n).$$
\end{lemma}

The subdifferential of a semi-concave function is upper semi-continuous as a set function. In particular, if the subdifferential is unique at $x_0$, then any subdifferential at $x_n\to x_0$ must converge to the derivative at $x_0$.

\begin{lemma}[\cite{RW98}, Proposition 8.7]\label{semi-cont}
Let $f(x)$ be a semi-concave function and $x_0$ be such that $f'(x_0)$ exists. Then for any $x_n\to x$ and  $l_{x_n}$  any subdifferential of $f(x)$ at $x_n$,  we have 
$$ l_{x_n}\to f'(x_0). $$
\end{lemma}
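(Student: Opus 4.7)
The plan is to combine three ingredients: the defining semi-concavity inequality satisfied by each $l_{x_n}$, the uniform Lipschitz bound of Lemma~\ref{Lipshitz} which gives boundedness of the sequence $\{l_{x_n}\}$, and the existence of $f'(x_0)$ to pin down every subsequential limit.

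First I would note that, since $f$ is $C$-semi-concave, every subderivative satisfies $|l_{x_n}| \le C'$ for a uniform constant $C'$ depending only on $C$ (this is essentially the same estimate that underlies Lemma~\ref{Lipshitz}). So $\{l_{x_n}\}$ is a bounded sequence in $\R^d$, and it suffices to show that every accumulation point equals $f'(x_0)$. Let $l^* = \lim_k l_{x_{n_k}}$ along some subsequence. Fix a unit direction $w \in \R^d$ and a small $t > 0$; for $k$ sufficiently large, the points $x_{n_k}$ and $x_{n_k} + tw$ lie in a single coordinate chart around $x_0$, so there is no ambiguity in the lift. The defining inequality gives
$$f(x_{n_k} + tw) - f(x_{n_k}) \le l_{x_{n_k}}(tw) + \tfrac{C}{2}\, t^2 |w|^2.$$
Sending $k \to \infty$ and using continuity of $f$ (which again follows from Lemma~\ref{Lipshitz}) yields
$$f(x_0 + tw) - f(x_0) \le t\, l^*(w) + \tfrac{C}{2}\, t^2.$$

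Next I would invoke differentiability of $f$ at $x_0$: the left-hand side equals $t f'(x_0)(w) + o(t)$, so dividing by $t$ and letting $t \to 0^+$ gives $f'(x_0)(w) \le l^*(w)$. Replacing $w$ with $-w$ produces the reverse inequality, hence $l^*(w) = f'(x_0)(w)$ for all $w$, i.e.\ $l^* = f'(x_0)$. Since every subsequential limit of the bounded sequence $\{l_{x_n}\}$ equals $f'(x_0)$, the full sequence converges to $f'(x_0)$.

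There is no real obstacle here: the proof is a short soft-analysis argument, and the only mild subtlety is the torus bookkeeping, namely choosing the lift of $y - x_n$ consistently. This is harmless because the argument only uses arbitrarily small displacements $tw$ from the fixed limit point $x_0$, so eventually everything takes place in one Euclidean chart and the semi-concavity inequality passes to the limit without modification.
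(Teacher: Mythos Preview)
Your argument is correct and is the standard soft-analysis proof of this fact. Note, however, that the paper does not actually supply its own proof of Lemma~\ref{semi-cont}: it simply cites \cite{RW98}, Proposition~8.7, and moves on. So there is nothing in the paper to compare your argument against; you have filled in a complete proof where the authors opted for a reference.
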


\begin{proof}[Proof of Lemma~\ref{2nd-der-action}]
	We have that $\Phi_{m,n}$ is a $C^1$ map in a neighbourhood of $(x_m, v_m)$ and that $\Phi_{m,n}(x_m, v_m)=(x_n,v_n)$. Since $(x_j,v_j)$ is a disconjugate orbit, $D\Phi_{m,n}V(x_m, v_m)\cap V(x_n,v_n)=\{0\}$. Denote 
	$$ 
	\Phi_{m,n}(x,v)=(x',v'), \quad
	D\Phi_{m,n}(x,v)=
	\begin{bmatrix}
		\bbA_{m,n} & \bbB_{m,n} \\
		\bbC_{m,n} & \bbD_{m,n}
	\end{bmatrix}(x,v).
	$$
	We have $\det \frac{\partial x'}{\partial v}(x_m, v_m) = \det \bbB_{m,n}(x_m,v_m)\ne 0$.  By the implicit function theorem, there exists unique $C^1$ functions $v=v(x,x')$ and $v'=v'(x,x')$ such that $\Phi_{m,n}(x,v(x,x'))=(x',v'(x,x'))$ in a neighbourhood of $\{(x_m, v_m)\}\times\{(x_n,v_n)\}$. 

	Let $(x,x')$ be sufficiently close to $(x_m,x_n)$. Let $(y_i,w_i)_{j=m}^n$ be any minimizing orbit for $A_{m,n}(x,x')$ and denote $v=w_m$, $v'=w_n$. By Lemma~\ref{properties}, $A_{m,n}$ is differentiable at $(x_m,x_n)$, $v'\to v_n$ and $v\to v_m$ as $(x, x')\to (x_m,x_n)$. Assume that  $(x,v,x', v')$ is so close to $(x_m, v_m, x_n,v_n)$  that the implicit function theorem applies. Then we have that $v=v(x,x')$ and $v'=v'(x,x')$ are well defined $C^1$ functions of $x$ and $x'$. Since $v=-\partial_1A_{m,n}(x,x')+b$ and $v'=\partial_2A_{m,n}(x,x')+b$, we conclude that $A_{m,n}(x,x')$ is a $C^2$ function. 

	Viewing $x',v'$ as functions of $(x,v)$, we have that 
	$$ \partial_2A_{m,n}(x, x'(x,v))=v'(x,v)-b. $$
	Differentiating both sides with respect to $v$, we have
	$$ \partial^2_{22}A_{m,n}(x_m,x_n)=\left(\frac{\partial v'}{\partial v}\right)\left(\frac{\partial x'}{\partial v}\right)^{-1}(x_m,v_m)=(\bbD_{m,n})(\bbB_{m,n})^{-1}(x_m,v_m).$$
	Using the definition of $U_{m-n}$, we have $(\bbD_{m,n})(\bbB_{m,n})^{-1}(x_m,v_m)=U_{n-m}(x_m,v_m)$. The conclusion about $S_{n-m}(x_m,v_m)$can be proved similarly using the map $\Phi_{m,n}^{-1}$. 
\end{proof}

\begin{lemma}\label{comparison}
Let $(x_j, v_j)_{j\in \Z}$ be a global minimizer. Then for $m < n \in\N$ and $w\in \R^d$, we have
$$ \langle \partial^2_{22}A_{m,n}(x_m,x_n)w,w\rangle \ge d^2\psi^-(x_n,n)(w),$$ and 
$$   \langle \partial^2_{11}A_{m, n}(x_m,x_n)w,w\rangle \ge d^2(-\psi^+(x_m,m))(w). $$
\end{lemma}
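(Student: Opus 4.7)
The plan is to dominate $\psi^-(\cdot,0)$ by a $C^2$ function via the Lax-Oleinik formula, then compare second subderivatives. The argument for $\psi^+$ is symmetric.

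First I would use the backward Lax-Oleinik formula with the specific point $x_{-k}$: for every $y\in\T^d$,
\begin{equation*}
  \psi^-(y,0) \;=\; \inf_{z\in\T^d}\{\psi^-(z,-k)+A_{-k,0}(z,y)\}
  \;\le\; \psi^-(x_{-k},-k)+A_{-k,0}(x_{-k},y).
\end{equation*}
Since $(x_j,v_j)_{j\le 0}$ is a backward portion of a global minimizer, the infimum on the right is attained at $z=x_{-k}$ when $y=x_0$, so equality holds at $y=x_0$. Subtracting this equality from the inequality gives
\begin{equation*}
  \psi^-(y,0)-\psi^-(x_0,0) \;\le\; A_{-k,0}(x_{-k},y)-A_{-k,0}(x_{-k},x_0).
\end{equation*}

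Next I would invoke Lemma~\ref{2nd-der-action} to obtain a $C^2$ expansion of the right-hand side near $x_0$. The hypotheses of that lemma apply: $(x_j,v_j)_{j=-k}^0$ is a minimizer (hence disconjugate), it is the unique minimizer for its endpoints (since it is part of the unique global minimizer), and by Lemma~\ref{properties} the relevant partial derivatives of $A_{-k,0}$ exist at $(x_{-k},x_0)$. Applying the lemma I get
\begin{equation*}
  A_{-k,0}(x_{-k},x_0+\tau w)-A_{-k,0}(x_{-k},x_0) \;=\; \tau(v_0-b)\cdot w + \tfrac12\tau^2 \langle \partial^2_{22}A_{-k,0}(x_{-k},x_0)w,w\rangle + o(\tau^2),
\end{equation*}
using $\partial_2 A_{-k,0}(x_{-k},x_0)=v_0-b$ from Lemma~\ref{properties}. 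Since $(x_0,v_0)$ lies on the global minimizer at the unique minimum of $\psi^--\psi^+$, both $\psi^\pm$ are differentiable at $x_0$, and Theorem~\ref{backward-min} identifies $\nabla\psi^-(x_0,0)=v_0-b$.

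Substituting $y=x_0+\tau w$ into the dominance inequality, subtracting $\tau\nabla\psi^-(x_0,0)\cdot w$ from both sides, dividing by $\tfrac12\tau^2$ and taking $\liminf_{\tau\to 0+}$ yields
\begin{equation*}
  2\,d^2\psi^-(x_0,0)(w) \;\le\; \langle \partial^2_{22}A_{-k,0}(x_{-k},x_0)w,w\rangle,
\end{equation*}
which is the first inequality. For the second, I would run the same argument starting from the forward Lax-Oleinik identity $\psi^+(y,0)\ge \psi^+(x_k,k)-A_{0,k}(y,x_k)$ (equality at $y=x_0$), rewritten as $-\psi^+(y,0)\le A_{0,k}(y,x_k)-\psi^+(x_k,k)$, using $\partial_1 A_{0,k}(x_0,x_k)=b-v_0=\nabla(-\psi^+)(x_0,0)$.

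The main subtlety is not a computational one but a bookkeeping issue: verifying that $\psi^\pm(\cdot,0)$ are actually differentiable at $x_0$ so that the linear term in the second-subderivative expression is well-defined and equals $(v_0-b)\cdot w$. This follows because $x_0$ minimizes $\psi^- - \psi^+$, placing $(x_0,v_0)$ on the unique global minimizer, which via Proposition~\ref{uniqueness} forces the gradient relations at $x_0$. Once these gradient identifications are in place, the rest is a direct $\liminf$ comparison from the $C^2$ upper bound.
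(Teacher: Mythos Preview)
Your proposal is correct and follows essentially the same route as the paper: dominate $\psi^-(\cdot,0)$ from above by $A_{-k,0}(x_{-k},\cdot)+\text{const}$ via the Lax-Oleinik formula with equality at $x_0$, identify the first-order terms, and pass to the $\liminf$ in the second-order difference quotient; the $\psi^+$ case is symmetric. The only extra content you add is spelling out why $A_{-k,0}$ is $C^2$ near $(x_{-k},x_0)$ (via Lemma~\ref{2nd-der-action}) and why $\nabla\psi^\pm(x_0,0)$ exist, which the paper leaves implicit.
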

\begin{proof}
	Since $K_{m,n}\psi^-(x,m)=\psi^-(x,n)$, we have
	$$ \psi^-(x_n, n)=\inf_{x'\in \T^d}\{\psi^-(x',m)+ A_{m,n}(x', x_n)\}=\psi^-(x_m,m)+A_{m,n}(x_m, x_n).$$
	For any other $x\in \T^d$, we have
	$$ \psi^-(x,0)=\inf_{x'\in \T^d}\{\psi^-(x',m)+ A_{m,n}(x',x)\}\le \psi^-(x_m,m)+A_{m,n}(x_m,x).$$
	Furthermore, 
	$$ \nabla\psi^-(x_n, n)=\partial_2A_{m,n}(x_m, x_0)=v_0-b. $$
	Combining the last three formulas, we have
	\begin{multline}\label{eq:2nd-der-comp}
	\psi^-(x,0)-\psi^-(x_0,0)-\langle \nabla \psi^-(x_0,0), x-x_0\rangle \\
	\le A_{m,n}(x_m, x) - A_{-k}(x_m, x_0) - \langle \partial_2 A_{m,n}(x_m, x_0), x-x_0\rangle. 
\end{multline}
Take $x-x_0 = \tau w$, divide by $\tau^2$ and take lower limit as $\tau\to 0+$, we conclude
$$ d^2\psi^-(x_0, 0)(w) \le \langle \partial^2_{22}A(x_k, x_0)w, w\rangle.$$ 
The first inequality of the lemma follows. 

Similar estimates with $\varphi^-$ replaced with $-\varphi^+$ prove the second inequality. 
\end{proof}

\begin{proof}
	[Proof of Proposition~\ref{transversal-bundle}]
	Applying Lemma~\ref{2nd-der-action} and \ref{comparison}, we have 
	\[
		\begin{aligned}
			& 	m(U_{-k}(x_0, v_0) - S_{-k}(x_0, v_0)) \ge m\left(  \partial^2_{22} A_{-k, 0}(x_{-k}, x_0) + \partial^2_{11} A_{-k, 0}(x_{-k}, x_0) \right)  \\
			& \ge \inf_{\|w\| = 1} d^2(\psi^- - \psi^+)(x_0, 0)(w) \ge a, 
		\end{aligned}
	\]
	the proposition follows by taking $k \to \infty$. 
\end{proof}

\section{Nonzero Lyapunov exponents}
\label{sec:nonzero-exp}

Recall that the family of maps $\Phi^\omega_j$ may be viewed as a single transformation $\hat\Phi$ acting on $\T^d\times\R^d\times \Omega$ given by \eqref{eq:random-map}. Define $\bbS, \bbU: \Omega\to M(d\times d)$ by $\bbS(\omega)=\bbS(x_0^\omega, v_0^\omega, \omega)$ and $\bbU(\omega)=\bbU(x_0^\omega, v_0^\omega, \omega)$, where $(x^\omega_j,v^\omega_j)_{j\in \Z}$ is the unique global minimizer guaranteed by Theorem~\ref{global-min}. We have 
$$\bbS(x^\omega_n, v^\omega_n)=\bbS(\theta^n \omega), \quad \bbU(x^\omega_n, v^\omega_n)=\bbU(\theta^n \omega).$$

With the assumptions of Main Theorem~\ref{hyperbolic}, the assumptions of Propositions \ref{nondegenerate} and \ref{integrability} are satisfied. By combining Proposition~\ref{nondegenerate} and Proposition~\ref{transversal-bundle}, we obtain that there exists a positive measurable function $a(\omega)$ such that 
\begin{equation}
	\label{eq:trans} 
	m(\bbU(\omega)-\bbS(\omega))\ge a(\omega).
\end{equation}
Moreover, using Proposition~\ref{integrability}, we have
\begin{equation}
	\label{int-est} 
	\int a(\omega)^{-1} dP(\omega) \le A(F) <\infty.
\end{equation}
It suffices to show these estimates imply Main Theorem~\ref{hyperbolic}. 

Before discussing the Lyapunov exponents of the cocycle $D\Phi^\omega_0$, we need to show that they are well defined.  We first describe some properties of the symplectic map $D\Phi_j^\omega$.We have
$$ D\Phi_j^\omega=
\begin{bmatrix}
	\bbA_j^\omega & \bbB_j^\omega \\
	\bbC_j^\omega & \bbD_j^\omega
\end{bmatrix},   $$
where 
\begin{equation}
	\label{eq:formula}
	\bbA_j^\omega=I + \partial^2 F_j^\omega,\quad \bbB^\omega_j=\bbD_j^\omega=I, \quad \bbC_j^\omega=-\partial^2F^\omega.
\end{equation}
Although we have an explicit formula for the matrix, the discussions that follow only use $\det \bbB_j^\omega\ne 0$ and some norm estimates. 

Any symplectic matrix given in the block form $[\bbA, \bbB; \bbC, \bbD]$ has the following properties. 
\begin{itemize}
	\item $\bbA^T\bbC=\bbC^T\bbA$, $\bbB^T\bbD=\bbD^T\bbB$, $\bbA^T\bbD-\bbC^T\bbB=I$. 
	\item $\bbA\bbB^T=\bbB\bbA^T$, $\bbC\bbD^T=\bbD\bbC^T$, $\bbA\bbD^T-\bbB\bbC^T=I$. 
	\item 
	$$
	\begin{bmatrix}
		\bbA & \bbB \\
		\bbC & \bbD 
	\end{bmatrix}^{-1}=
	\begin{bmatrix}
		\bbD^T & -\bbB^T \\
		-\bbC^T & \bbA^T
	\end{bmatrix}.
	$$
\end{itemize}
From the explicit formula (\ref{eq:formula}) for $D\Phi^\omega_j$ and its inverse, we have that $\|D\Phi^\omega_j\|, \|D(\Phi^\omega_j)^{-1}\| \le 1+ C_j^\omega$, where $C_j^\omega=\|\partial^2 F_j^\omega\|$. 

Since $\E(1+C_0^\omega)<\infty$, we have $\E(\log(1+C_0^\omega))<\infty$. It follows that 
$$ \log^+\|D\Phi^\omega_j\|, \log^+\|D(\Phi^\omega_j)^{-1}\| \in L^1(\nu), $$
where $\log^+(x)=\max\{\log x, 0\}$. By Oseledets' theorem for cocycles (see \cite{BP07}, Theorem 3.4.3), the Lyapunov exponents $\lambda_i(\nu)$ are well defined. We say that a positive function $g: \Omega \to \R$ is tempered if almost surely, 
$$ \lim_{n\to \pm \infty} \frac{1}{n} \log g(\theta^n \omega)=0.$$
We have the following lemma:
\begin{lemma}[\cite{BP07}, Lemma 2.1.5]\label{tempered}
If $\log^+g(\omega), \log^+(g(\omega)^{-1})\in L^1(d\omega)$, then $g$ is tempered. 
\end{lemma}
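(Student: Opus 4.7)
The plan is to reduce the claim to a direct Borel--Cantelli argument applied to the stationary sequence $f\circ\theta^n$, where $f=\log g=\log^+g-\log^+(g^{-1})$. By hypothesis $f\in L^1(P)$, and temperedness of $g$ is precisely the statement that $\frac{1}{n}f(\theta^n\omega)\to 0$ for $P$-a.e.\ $\omega$ as $n\to\pm\infty$. So the real content of the lemma is a quantitative version of the elementary fact that an $L^1$ function evaluated along the orbit of a measure-preserving transformation grows sublinearly almost surely.

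Concretely, I would fix $\epsilon>0$ and define, for each $n\in\Z\setminus\{0\}$, the event
\[ A_n(\epsilon)=\{\omega:|f(\theta^n\omega)|\ge \epsilon|n|\}. \]
Because $\theta$ is a measure-preserving metric isomorphism (so $\theta^n$ preserves $P$ for every $n\in\Z$, including negative $n$), we have $P(A_n(\epsilon))=P(\{|f|\ge\epsilon|n|\})$. The standard layer-cake estimate gives
\[ \sum_{n\in\Z\setminus\{0\}}P(\{|f|\ge\epsilon|n|\})\le \frac{2}{\epsilon}\,\E|f|<\infty, \]
which is finite by the integrability hypothesis on $\log^+g$ and $\log^+(g^{-1})$. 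The Borel--Cantelli lemma then implies that for $P$-a.e.\ $\omega$ only finitely many of the events $A_n(\epsilon)$ occur, so
\[ \limsup_{|n|\to\infty}\frac{|f(\theta^n\omega)|}{|n|}\le\epsilon \quad\text{for }P\text{-a.e. }\omega. \]

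To finish, I would apply this to a countable sequence $\epsilon=1/k$, $k\in\N$, and intersect the resulting full-measure sets, obtaining a single full-measure set on which $\limsup_{|n|\to\infty}|f(\theta^n\omega)|/|n|=0$. Since $\frac{1}{n}\log g(\theta^n\omega)=\frac{1}{n}f(\theta^n\omega)$, this is exactly temperedness of $g$.

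There is no real obstacle; the only point deserving care is the two-sided version of Borel--Cantelli, which requires knowing that $P$ is invariant under $\theta^{-1}$ as well as $\theta$. This is already built into the setup of Section~\ref{sec:main}, where $\theta$ is declared a metric isomorphism of $\Omega$, so $\theta^{-n}$ is also measure-preserving and the bound $P(A_n(\epsilon))=P(\{|f|\ge\epsilon|n|\})$ is valid for $n<0$ as well.
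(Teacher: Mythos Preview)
Your argument is correct; this is the standard Borel--Cantelli proof of the temperedness criterion. The paper does not supply a proof of this lemma at all---it simply cites \cite{BP07}, Lemma~2.1.5---so there is nothing to compare.
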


Our Main Theorem~\ref{hyperbolic} follows immediately from Proposition~\ref{nonzero-exp} below. The proof mostly follow \cite{Arnaud2013} with the additional ingredient (\ref{int-est}). 
\begin{proposition}\label{nonzero-exp}
Assume that the Green bundles defined along the global minimizer $(x_j,v_j)$ satisfies (\ref{eq:trans}) and (\ref{int-est}).  Then we have
$$ \chi_{d+1}(\nu)\ge \frac12 \int \log \left( 1+ \frac{1}{2+C_0^\omega} m(\bbU(\omega)-\bbS(\omega))  \right)dP(\omega)>0.$$
\end{proposition}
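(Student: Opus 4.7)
The plan is to reduce the problem to analysing the $d$-dimensional cocycle obtained by restricting $D\Phi_0^\omega$ to the $d$-dim invariant Lagrangian subbundle $LU$. Parameterising $LU(\omega)$ through its $x$-coordinate, $\xi \mapsto (\xi, \bbU(\omega)\xi)$, this restriction is realised by the matrix $M_1(\omega)=\bbA+\bbB\bbU(\omega)$; analogously set $M_2(\omega)=\bbA+\bbB\bbS(\omega)$. Under the transversality hypothesis (\ref{eq:trans}) the decomposition $\R^{2d}=LU\oplus LS$ is exactly the Oseledets splitting into the nonnegative and nonpositive exponents of the full symplectic cocycle, so the $d$ Lyapunov exponents of $M_1^{(n)}$ coincide with $\lambda_{d+1}\le\cdots\le\lambda_{2d}$ of $D\Phi_0^\omega$. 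In particular $\lambda_{d+1}$ is the smallest Lyapunov exponent of $M_1^{(n)}$, which I will bound from below by Birkhoff's theorem once a uniform per-step expansion inequality is in hand.

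The core of the proof is to establish this expansion inequality. Since $D\Phi$ is symplectic, the symplectic pairing between a vector $v_1=(\xi,\bbU\xi)\in LU$ and $v_2=(\eta,\bbS\eta)\in LS$, which equals $\xi^T\Delta(\omega)\eta$ with $\Delta:=\bbU-\bbS$, is preserved by $D\Phi$. Unpacking this in the pushed-forward coordinates gives the algebraic identity
\[M_1(\omega)^T\,\Delta(\theta\omega)\,M_2(\omega)=\Delta(\omega).\]
Combined with $M_1-M_2=\bbB\Delta$ (immediate from the definitions of $M_1$, $M_2$), a short manipulation yields
\[M_1^T\Delta(\theta\omega)M_1=\Delta(\omega)+\Delta(\omega)\,M_2(\omega)^{-1}\bbB\,\Delta(\omega).\]
The bound $\|M_2\|\le\|D\Phi_0^\omega\|\le C_0^\omega+1$ together with the positive-definiteness of $M_2$ along the minimiser (a higher-dimensional manifestation of the fact that the second variation of the action at a minimum is PSD, cf.\ Lemma~\ref{2nd-der-action}) then gives $M_2(\omega)^{-1}\ge(C_0^\omega+1)^{-1}I$, so
\[M_1(\omega)^T\Delta(\theta\omega)M_1(\omega)\ \ge\ \Bigl(1+\frac{m(\Delta(\omega))}{C_0^\omega+1}\Bigr)\Delta(\omega)\]
in the symmetric matrix order.

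Translated into the adapted random norm $\|\xi\|_{\Delta(\omega)}^2:=\xi^T\Delta(\omega)\xi$ on $\R^d$, this reads $\|M_1(\omega)\xi\|_{\Delta(\theta\omega)}^2\ge(1+m(\Delta(\omega))/(C_0^\omega+1))\|\xi\|_{\Delta(\omega)}^2$. Iterating $n$ times, taking $\tfrac1n\log$, and applying Birkhoff's ergodic theorem to the summand yields
\[\liminf_n\tfrac1n\log\|M_1^{(n)}(\omega)\xi\|_{\Delta(\theta^n\omega)}\ \ge\ \tfrac12\int\log\Bigl(1+\frac{m(\Delta(\omega))}{C_0^\omega+1}\Bigr)dP(\omega).\]
The switch from the adapted norm back to the Euclidean norm contributes $0$ to the Lyapunov exponent by Lemma~\ref{tempered}: both $\|\Delta(\omega)\|$ and $m(\Delta(\omega))^{-1}$ are tempered along the orbit, because (\ref{int-est}) gives $\log m(\Delta)^{-1}\in L^1(P)$ and a routine bound using $\bbU\le U_1,\bbS\ge S_1$ controls $\log\|\Delta\|$. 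Hence the same lower bound applies to $\lambda_{d+1}$. Strict positivity of the right-hand side is automatic: the integrand is $P$-a.s.\ strictly positive by (\ref{eq:trans}), while $\log(1+x)\le x$ together with $\int m(\Delta)\,dP<\infty$ ensures integrability.

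The main obstacle is proving the positive-definiteness of $M_2$ along the minimising orbit, without which the step $M_2^{-1}\ge(C_0+1)^{-1}I$ is unavailable. In one dimension this is immediate from $M_1>0$ and $M_1M_2=\Delta/\Delta'>0$; in higher dimensions it requires careful bookkeeping of the symplectic identities together with the observation that the PSD second variation of the discrete action at the minimiser forces the diagonal blocks $M_2(\omega)$ to inherit this sign through the Green-bundle limit. Once $M_2>0$ is secured, the remainder of the argument is algebra plus Birkhoff.
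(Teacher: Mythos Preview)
Your approach is essentially the paper's, repackaged: the paper introduces the symplectic change of basis $Q(z_j)$ that block-diagonalises $D\Phi_j$ into $\diag(M_j,N_j)$ with $M_j=\Delta_{j+1}^{1/2}(\bbA_j+\bbB_j\bbU_j)\Delta_j^{-1/2}$, and then proves $M_j^TM_j\ge I+(1+C_j)^{-1}m(\Delta_j)I$. Your matrix $M_1$ is just $\Delta_{j+1}^{-1/2}M_j\Delta_j^{1/2}$, so your inequality $M_1^T\Delta'M_1\ge(1+\tfrac{m(\Delta)}{1+C_0})\Delta$ is literally the same estimate written in the adapted $\|\cdot\|_\Delta$-norm rather than after conjugation by $\Delta^{1/2}$. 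The Birkhoff/tempering endgame is identical.

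There are, however, two genuine gaps in your justification of the crucial step. First, the inequality $\|M_2\|\le\|D\Phi_0^\omega\|$ is false as stated: $M_2\xi$ is only the $x$-component of $D\Phi_0(\xi,\bbS\xi)$, and $\|(\xi,\bbS\xi)\|\ne\|\xi\|$. Second, your proposed route to $M_2>0$ via ``PSD second variation \ldots\ through the Green-bundle limit'' is vague and does not actually land on $M_2$. Both facts are instead immediate consequences of the Green-bundle ordering in Lemma~\ref{greenbundles}. Since $\bbB_j=I$ and $S_1(z_j)=-\bbB_j^{-1}\bbA_j=-\bbA_j$, one has $M_2=\bbA_j+\bbS_j=\bbS_j-S_1(z_j)$; the strict inequality $\bbS_j>S_1(z_j)$ gives $M_2>0$, and $\bbS_j<U_1(z_j)$ gives $0<M_2<U_1(z_j)-S_1(z_j)$, whence $\|M_2\|\le\|U_1-S_1\|\le 1+C_j^\omega$ from the explicit formulas \eqref{eq:formula}. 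This is exactly how the paper argues it (see the paragraph establishing \eqref{eq:conorm}). With this fix, your proof goes through.
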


\begin{proof}
	To simplify the notations, we omit the supscript $\omega$. The invariance of  the bundles $\bbS(x_n, v_n)$ and $\bbU(x_n, v_n)$ corresponds to the following statement: Given a vector $h\in \R^d$, if $D\Phi_j(h, \bbS(x_j, v_j))=(h',w')$, then $w'=\bbS(x_{j+1}, v_{j+1})h'$. To further simplify notations, let us write $z_j=(x_j, v_j)$, $\bbS_j=\bbS(z_j)$ and $\bbU_j=\bbU_j(z_j)$. Expressing the above invariance relation in the matrix form, we have: 
	\begin{equation}
		\label{eq:s-inv} 
		\bbS_{j+1}(\bbA_j+ \bbB_j\bbS_j)=\bbC_j + \bbD_j\bbS_j.
	\end{equation}
	By the same reasoning, we have:
	\begin{equation}
		\label{eq:u-inv} 
		\bbU_{j+1}(\bbA_j+ \bbB_j\bbU_j)=\bbC_j + \bbD_j\bbU_j.
	\end{equation}

	We would like to understand the matrix product $D\Phi_{m,n}(x_m)=\prod_{j=m}^{n-1}D\Phi_j(x_j)$ by introducing a change of coordinates. Let 
	$$ Q=
	\begin{bmatrix}
		I & I \\
		\bbS & \bbU 
	\end{bmatrix}
	\begin{bmatrix}
		(\bbU-\bbS)^{-\frac12} &  0\\
		0 & (\bbU-\bbS)^{-\frac12}
	\end{bmatrix}.
	$$
	By a direct calculation, we have that $Q$ is a symplectic matrix, and 
	$$ Q^{-1}=
	\begin{bmatrix}
		(\bbU-\bbS)^{-\frac12} &  0\\
		0 & (\bbU-\bbS)^{-\frac12}
	\end{bmatrix}
	\begin{bmatrix}
		\bbU & -I \\
		-\bbS & I 
	\end{bmatrix}.
	$$
	Note that 
	\begin{multline*}
		\begin{bmatrix}
			\bbU_{j+1} & -I \\
			-\bbS_{j+1} & I 
		\end{bmatrix}
		\begin{bmatrix}
			\bbA_j & \bbB_j \\
			\bbC_j & \bbD_j
		\end{bmatrix}
		\begin{bmatrix}
			I & I \\
			\bbS_j & \bbU_j 
		\end{bmatrix} =  \\
		\begin{bmatrix}
			-\bbS_{j+1}(\bbA_j + \bbB_j\bbU_j) + (\bbC_j + \bbD_j \bbU_j)  & 0 \\
			0 &  \bbU_{j+1}(\bbA_j + \bbB_j\bbS_j) + (\bbC_j + \bbD_j \bbS_j)
		\end{bmatrix} \\
		= \begin{bmatrix}
		(\bbU_{j+1}-\bbS_{j+1})(\bbA_j + \bbB_j\bbU_j)  & 0 \\
		0 &  (\bbU_{j+1}-\bbS_{j+1})(\bbA_j + \bbB_j\bbS_j)
	\end{bmatrix}  
\end{multline*}
The last line of the above calculation is due to (\ref{eq:s-inv}) and (\ref{eq:u-inv}). We obtain
$$ Q(z_{j+1})^{-1} D\Phi_j(z_j)\, Q(z_j): =  \begin{bmatrix}
M_j & 0 \\
0 & N_j
\end{bmatrix}, $$
where
$$ M_j = (\bbU_{j+1}-\bbS_{j+1})^{\frac12} (\bbA_j + \bbB_j \bbS_j) (\bbU_j-\bbS_j)^{-\frac12}, $$
$$ N_j = (\bbU_{j+1}-\bbS_{j+1})^{\frac12} (\bbA_j + \bbB_j \bbU_j) (\bbU_j-\bbS_j)^{-\frac12}. $$
Since the matrix is symplectic, we have  $(N_j)^TM_j=I$. We have the following computation:
\begin{align*}
	N_j^TN_j & = M_j^{-1}N_j \\
	& =  (\bbU_j-\bbS_j)^{\frac12} (\bbA_j + \bbB_j \bbS_j)^{-1} (\bbA_j + \bbB_j\bbU_j) (\bbU_j-\bbS_j)^{-\frac12}\\
	& =  I + (\bbU_j-\bbS_j)^{\frac12} (\bbB_j^{-1}\bbA_j + \bbS_j)^{-1} (\bbU_j-\bbS_j) (\bbU_j-\bbS_j)^{-\frac12} \\
	& =  I + (\bbU_j-\bbS_j)^{\frac12} (\bbB_j^{-1}\bbA_j + \bbS_j)^{-1} (\bbU_j-\bbS_j)^{\frac12}
\end{align*}
We claim that the matrix $\bbB_j^{-1}\bbA_j + \bbS_j$ is positive definite and that the conorm
\begin{equation}
	\label{eq:conorm}
	m((B_j^{-1}A_j + \bbS_j)^{-1})\ge \frac{1}{2+ C_j^\omega}.
\end{equation}
To see this, recall that the matrix $S_1(z_j)$  is defined by $D(\Phi_j)^{-1}V(z_{j+1})$. Take a vertical vector $(0, \delta v)\in V(z_j)$, we have that 
\begin{equation}
	\label{eq:S_1}  
	D(\Phi_j)^{-1}
	\begin{bmatrix}
		0 \\ \delta v
	\end{bmatrix} = 
	\begin{bmatrix}
		\bbD_j^T & -\bbB_j^T \\
		-\bbC_j^T & \bbA_j^T 
	\end{bmatrix}
	\begin{bmatrix}
		0 \\ \delta v
	\end{bmatrix} =
	\begin{bmatrix}
		-\bbB_j^T \delta v \\ \bbA_j^T \delta v
	\end{bmatrix}.
\end{equation}
It follows that $S_1(z_j)=-(\bbB_j^{-1}\bbA_j)^T = -\bbB_j^{-1}\bbA_j$. Since $\bbS_j>S_1(z_j)$, we have that $\bbB_j^{-1}\bbA_j + \bbS_j= \bbS_j- S_1(z_j)>0$. Since $U_1(z_j)>\bbU_j>\bbS_j$, we have  
$$m((\bbB_j^{-1}\bbA_j + \bbS_j)^{-1}) = \|\bbB_j^{-1}\bbA_j + \bbS_j\|^{-1} \ge \|U_1(z_j)-S_1(z_j)\|^{-1}.$$
By a calculation similar to the one in \eqref{eq:S_1}, we have  $U_1(z_j)=\bbD_{j-1} \bbB_{j-1}^{-1}$.  From the explicit formula (\ref{eq:formula}) of $D\Phi_j$, it is easy to see that $\|U_1(z_j)-S_1(z_j)\|\le 2+ C_j^\omega$. The estimate \eqref{eq:conorm} follows. 

Using the estimates obtained, we have that 
\begin{equation}
	\label{eq:M-norm}
	m(N_j^TN_j)\ge 1 + \frac{1}{2+C_j^\omega}m((\bbU_j-\bbS_j)).
\end{equation}

We are now ready to estimate the Lyapunov exponent $\lambda_{d+1}(\nu)$. Since 
$$ D\Phi_{0,n}=Q(z_{n})\prod_{j=n-1}^0
\begin{bmatrix}
	M_j & 0 \\
	0 & N_j
\end{bmatrix} Q(z_0)^{-1},
$$
we have
$$ \left(Q(z_{n})^{-1}D\Phi_{0,n}\right)^T\left(Q(z_{n})^{-1}D\Phi_{0,n}\right) = (Q(z_0)^{-1})^T
\begin{bmatrix}
	M_{n,0}^TM_{n,0} & 0 \\
	0 & N_{n,0}^TN_{n,0}
\end{bmatrix}
Q(z_0)^{-1},
$$
where $M_{n,0}=\prod_{j=n-1}^0M_j$ and $N_{n,0}=\prod_{j=n-1}^0N_j$. We have the following estimates:
$$ m(N_{n,0}^TN_{n,0})=m((M_{n,0}^TM_{n,0})^{-1})\ge \prod_{j=0}^{n-1}\left(1 + \frac{1}{2+C_j^\omega}m(\bbU_j-\bbS_j)\right).$$
Consider a vector $w\in \R^{2d}$ and let $\tilde{w}=(\tilde{w}_1, \tilde{w}_2)=Q(z_0)^{-1}w$. If $\tilde{w}_2\ne 0$, then 
\begin{multline}\label{lyapunov} \liminf_{n\to \infty}\frac1{n}\log\|Q(z_{n})^{-1}D\Phi_{0,n}w\|^2\\
\ge \liminf_{n\to \infty}\frac1{n}\log \prod_{j=0}^{n-1}\left(1 + \frac{1}{2+C_j^\omega}m(\bbU_j-\bbS_j)\right)\|\tilde{w}_1\|^2\\
= \liminf_{n\to \infty}\frac1{n}\sum_{j=0}^{n-1}\log\left(1 + \frac{1}{2+C_j^\omega}m(\bbU_j-\bbS_j)\right).
\end{multline}
If $\tilde{w}_2=0$, then 
$$\limsup_{n\to \infty}\frac1{n}\log\|Q(z_{n-1})^{-1}D\Phi_{0,n}w\|^2\le -\liminf_{n\to \infty}\frac1{n}\sum_{j=0}^{n-1}\log\left(1 + \frac{1}{2+C_j^\omega}m(\bbU_j-\bbS_j)\right).$$
The norm  $\|Q(z_{n-1})\|\le (2+ C_{n-1}^\omega)m(\bbU_{n-1}-\bbS_{n-1})^{-\frac12}$.

We have $\log(1+C_0^\omega)\in L^1(dP(\omega))$ by assumption.  Furthermore, since
$$ a(\omega)\le \bbU(\omega)-\bbS(\omega)\le U_1(z_0^\omega)-S_1(z_0^\omega) \le 1 + C_0^\omega,$$
we know that $\log^+ (\bbU(\omega)-\bbS(\omega)), \log^+(\bbU(\omega)-\bbS(\omega))^{-1}\in L^1(dP(\omega))$. Using Lemma~\ref{tempered}, we have 
\begin{equation}
	\label{eq:temp-1}
	\lim_{n\to \infty}\frac{1}{n}\log(1+ C_{n-1}^\omega)=0, \quad
	\lim_{n\to \infty}\frac1{2n}\left|\log m(\bbU(z^\omega_{n-1})-\bbS(z^\omega_{n-1})\right|=0.
\end{equation}

We now finish the proof of the proposition.  If $\tilde{w}_1\ne 0$, using (\ref{lyapunov}), (\ref{eq:temp-1}) and the Birkhoff ergodic theorem, we have
\begin{multline*}
	\liminf_{n\to \infty}\frac1{n}\log\|D\Phi_{0,n}w\| 
	\ge \int\frac12\log\left(1 + \frac{1}{1+C_0^\omega}m((\bbU-\bbS)(z^\omega_0))\right)dP(\omega)
	\\ - \limsup_{n\to \infty}\frac{1}{n}\left(\log(1+ C_{n-1}^\omega) - \frac12\log m(\bbU(z^\omega_{n-1})-\bbS(z^\omega_{n-1}))\right)
	\\ = \int\frac12\log\left(1 + \frac{1}{1+C_0^\omega}m((\bbU-\bbS)(z^\omega_0))\right)dP(\omega)>0.
\end{multline*}
Note that for $\tilde{w}_1=0$ we have 
$$ \limsup_{n\to \infty}\frac1{n}\log\|D\Phi_{0,n}w\| 
\le -\int\frac12\log\left(1 + \frac{1}{1+C_0^\omega}m((\bbU-\bbS)(z^\omega_0))\right)dP(\omega)<0.$$
\end{proof}

\section{Dynamics near the global minimizer}
\label{sec:loc-smooth}
We prove Main Theorem~\ref{smooth-vis} in the following two sections. By assumption, the potentials $F_j^\omega$ are $C^{2+\alpha}$ for some $0< \alpha\le 1$. We shall abuse notation and denote 
$$ C_j^\omega=\|F_j^\omega\|_{2+\alpha}.$$ 
By Assumption 5,  $\mathbb{E}(C_j^\omega)<\infty$. 

As the global minimizer $(x_j,v_j)$ is hyperbolic, we will apply the theory of nonuniform hyperbolic systems (Pesin's theory) to obtain the local stable manifolds $W^s(x_j, v_j)$ and unstable manifolds $W^u(x_j,v_j)$. We restate the unstable part of Main Theorem~\ref{smooth-vis} as follows. 

\begin{theorem}\label{smooth}
There exists a neighbourhood $V(\omega)$ of $(x_0,v_0)$ such that $\psi^-$ is $C^2-$smooth on $V$, and
$$W_0^-:=\{(x, \nabla_x \psi^-(x,0))\} \cap V= W^u(x_0,v_0)\cap V.$$
\end{theorem}
\begin{remark}
	We can prove the corresponding statement for $W_0^+$, namely
	$$W_0^+:=\{(x, -\nabla_x \psi^+(x,0))\} \cap V\subset W^s(x_0,v_0)\cap V$$
	by reversing time. Main Theorem~\ref{smooth-vis} follows. 
\end{remark}

In this section, we will focus on studying the dynamics of orbits close to the global minimizer. This information will be used to prove Theorem~\ref{smooth} in the next section.
The main goal is to obtain precise estimates of the expansion/contraction for the random maps $\Phi_j^\omega$. The methods used here are well known to experts in the field; we will provide proofs for some and refer to \cite{BP07} for others. 

In the proof of Proposition~\ref{nonzero-exp}, we obtained the following reduction of the cocycle $D\Phi_{m,n}(x_j, v_j)$. Let $z_j$ denote $(x_j, v_j)$ and 
$$ Q(z_j)=
\begin{bmatrix}
	I & I \\
	\bbS(z_j) & \bbU(z_j)
\end{bmatrix}
\begin{bmatrix}
	(\bbU-\bbS)(z_j)^{-\frac12} & 0 \\ 
	0 & (\bbU-\bbS)(z_j)^{-\frac12} 
\end{bmatrix}, $$
then $\bbL(z_j):=Q(z_{j+1})^{-1}D\Phi_j Q(z_j) = \diag\{M_j, N_j\}$. Here 
$$ N_j = (\bbU_{j+1}-\bbS_{j+1})^{\frac12} (\bbA_j + \bbB_j \bbU_j) (\bbU_j-\bbS_j)^{-\frac12} $$
and $M_j = (N_j^T)^{-1}$.

We will also use a standard construction in nonuniform hyperbolic theory known as the \emph{tempering kernel}. 
\begin{lemma}[See \cite{BP07}, Lemma 3.5.7.]\label{tempering}
Assume that $g:\Omega\to \R$ is a tempered positive function, that is  almost surely $ \lim_{n\to \pm\infty} \frac{1}{n}\log g(\theta^n\omega) =0$. 
Then for any $\epsilon>0$ there exists positive functions $K^1, K^2:\Omega \to \R$ such that 
almost surely $K^1(\omega) \le g(\omega) \le K^2(\omega)$, and 

$$e^{-\epsilon}\le \frac{K^i(\theta \omega)}{K_i(\omega)} \le e^{\epsilon}, \, i=1,2. $$
\end{lemma}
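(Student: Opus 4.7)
The plan is to build $K^{1}$ and $K^{2}$ by an explicit extremal construction along the orbit of $\omega$, namely
\[
K^{2}(\omega) := \sup_{n\in\Z} e^{-\epsilon|n|}\, g(\theta^{n}\omega), \qquad K^{1}(\omega) := \inf_{n\in\Z} e^{\epsilon|n|}\, g(\theta^{n}\omega).
\]
The sandwich inequality $K^{1}(\omega)\le g(\omega)\le K^{2}(\omega)$ will then be immediate from taking $n=0$ in each extremum, so the real content is to check (i) that $K^{2}$ is finite and $K^{1}$ is strictly positive, and (ii) that each has the required multiplicative cocycle-type bound under $\theta$.

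For (i), temperedness says $\tfrac{1}{|n|}\log g(\theta^{n}\omega)\to 0$ for a.e.\ $\omega$ as $|n|\to\infty$. So for every $\epsilon>0$ there exists $N(\omega)$ with $|\log g(\theta^{n}\omega)|\le \tfrac{\epsilon}{2}|n|$ for $|n|\ge N(\omega)$. This forces $e^{-\epsilon|n|}g(\theta^{n}\omega)\le e^{-\epsilon|n|/2}\to 0$ and $e^{\epsilon|n|}g(\theta^{n}\omega)\ge e^{\epsilon|n|/2}\to\infty$, so both extrema are attained on the finite set $\{|n|\le N(\omega)\}$ where $g(\theta^{n}\omega)$ is positive and finite; hence $0<K^{1}(\omega)\le K^{2}(\omega)<\infty$ a.s.

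For (ii), I would perform the shift of index $m=n+1$ inside each extremum. For $K^{2}$, this gives
\[
K^{2}(\theta\omega)=\sup_{n\in\Z} e^{-\epsilon|n|}g(\theta^{n+1}\omega)=\sup_{m\in\Z} e^{-\epsilon|m-1|}g(\theta^{m}\omega),
\]
and the elementary bound $\bigl||m-1|-|m|\bigr|\le 1$ yields $e^{-\epsilon}e^{-\epsilon|m|}\le e^{-\epsilon|m-1|}\le e^{\epsilon}e^{-\epsilon|m|}$. Taking sup over $m$ gives $e^{-\epsilon}K^{2}(\omega)\le K^{2}(\theta\omega)\le e^{\epsilon}K^{2}(\omega)$, which is the desired inequality for $i=2$. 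The same index shift applied to $K^{1}$, combined with the same one-step distortion of $|n|$, yields the analogous bound for $i=1$ after taking the infimum.

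There is no substantive obstacle: the construction is a standard tempering kernel, and the only point requiring any care is measurability of the sup/inf, which follows because both are suprema/infima of countably many measurable functions of $\omega$. The whole argument relies on nothing beyond the defining property of a tempered function and the triangle inequality for $|\cdot|$ on $\Z$.
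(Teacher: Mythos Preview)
Your proof is correct and is precisely the standard tempering-kernel construction; the paper does not give its own proof of this lemma but simply cites \cite{BP07}, Lemma~3.5.7, where exactly this extremal construction $K^{2}(\omega)=\sup_{n}e^{-\epsilon|n|}g(\theta^{n}\omega)$, $K^{1}(\omega)=\inf_{n}e^{\epsilon|n|}g(\theta^{n}\omega)$ is used.
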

We define
$$\lambda_j(\omega)=\max\left\{\log \left( 1+ \frac{m(\bbU_j(\omega)-\bbS_j(\omega))}{1+ C_j^\omega}\right), 1\right\}. $$
According to (\ref{eq:M-norm}), we have
$$ \|N_j^{-1}\|^{-1} \ge e^{\lambda_j}, \quad \|M_j\|\le e^{-\lambda_j}. $$
We have $\lambda_j(\omega) = \lambda_0(\theta^j \omega)$, and $1\ge \lambda_0\ge m(\bbU_0(\omega)-\bbS_0(\omega)) \ge a(\omega)$, hence $\lambda_0(\omega)$ is integrable. Denote $\lambda = \E \lambda_0(\omega)$. 
\begin{lemma}\label{slowvarying}
For any $\epsilon>0$, there exists $\kappa(\omega), K_0(\omega)>0, a_0(\omega)>0$ satisfying
$$e^{-\epsilon}\le \frac{K_0(\theta \omega)}{K_0 (\omega)}, \, \frac{a_0(\theta \omega)}{a_0 (\omega)}, \, \frac{\kappa_0(\theta \omega)}{\kappa_0(\omega)}\le e^{\epsilon}$$
such that $2+C_0(\omega)\le K_0(\omega)$, $a_0(\omega)\le m(\bbU_0(\omega)-\bbS_0(\omega))$, and   
$$ \exp{\sum_{k=0}^{n-1} \lambda_{\pm k}(\omega)} \ge \kappa_0(\omega) e^{ n(\lambda -\epsilon)}. $$
\end{lemma}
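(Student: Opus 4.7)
The plan is to deduce all three statements from the tempering kernel construction of Lemma~\ref{tempering}, applied to three different positive functions on $\Omega$. For $K_0$ and $a_0$ the inputs are $1+C_0^\omega$ and $m(\bbU_0(\omega)-\bbS_0(\omega))$ themselves, and the temperedness hypothesis of Lemma~\ref{tempering} can be checked via the $L^1$ criterion of Lemma~\ref{tempered}. For $\kappa_0$ the input is a positive function built out of Birkhoff sums of $\lambda_0$, and temperedness requires a direct ergodic-theoretic argument rather than a bare $L^1$ estimate.

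I would start with $K_0$. Under Assumption~5 and the form $F_j^\omega=\sum_i \xi_j^i F_i$, $C_0^\omega\le(\sup_i\|F_i\|_{C^2})|\xi_0|\in L^1(dP)$, so $\log^+(1+C_0^\omega)\le 1+C_0^\omega\in L^1$, and $\log^+((1+C_0^\omega)^{-1})=0$ since $1+C_0^\omega\ge 1$. Lemma~\ref{tempered} makes $1+C_0^\omega$ tempered, and Lemma~\ref{tempering} yields an upper slowly varying envelope $K_0\ge 1+C_0^\omega$. For $a_0$, the relevant chain of inequalities is $a(\omega)\le m(\bbU_0-\bbS_0)\le\|U_1(z_0)-S_1(z_0)\|\le 1+C_0^\omega$, combining (\ref{eq:trans}) with the explicit bound on $\|U_1-S_1\|$ derived in the proof of Proposition~\ref{nonzero-exp}. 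Then $\log^+m\le\log^+(1+C_0^\omega)\in L^1$, while (\ref{int-est}) together with $\log^+(a^{-1})\le a^{-1}$ forces $\log^+ m^{-1}\in L^1$. Lemma~\ref{tempered} followed by Lemma~\ref{tempering} now delivers a lower slowly varying envelope $a_0\le m(\bbU_0-\bbS_0)$.

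For $\kappa_0$, note that $\lambda_0(\omega)$ is uniformly bounded, since $m(\bbU_0-\bbS_0)/(1+C_0^\omega)\le 1$ implies $\log(1+m/(1+C))\in[0,\log 2]$. In particular $\lambda_0\in L^1$ and $\lambda=\E\lambda_0$ is finite and, by (\ref{eq:trans}), strictly positive. Birkhoff's ergodic theorem applied to $\theta$ and $\theta^{-1}$ gives, for a.e.\ $\omega$,
\begin{equation*}
\lim_{n\to\infty}\frac{1}{n}S_n^{\pm}(\omega)=\lambda,\qquad S_n^{\pm}(\omega):=\sum_{k=0}^{n-1}\lambda_{\pm k}(\omega).
\end{equation*}
Set
\begin{equation*}
\tilde\kappa_0(\omega):=\min\Bigl\{\inf_{n\ge 0}e^{S_n^+(\omega)-n(\lambda-\epsilon/2)},\ \inf_{n\ge 0}e^{S_n^-(\omega)-n(\lambda-\epsilon/2)}\Bigr\}.
\end{equation*}
Writing $S_n^{\pm}-n(\lambda-\epsilon/2)=n\epsilon/2+(S_n^{\pm}-n\lambda)$ and using Birkhoff, each infimum is attained on a finite $\omega$-dependent window, so $\tilde\kappa_0>0$ a.s.\ and $\exp(S_n^{\pm}(\omega))\ge\tilde\kappa_0(\omega)e^{n(\lambda-\epsilon/2)}$ for all $n\ge 0$.

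The remaining and main technical step is verifying that $\tilde\kappa_0$ is tempered. Using the forward cocycle identity $S_n^+(\theta^m\omega)=S_{m+n}^+(\omega)-S_m^+(\omega)$ and its backward counterpart, one rewrites, for $m\ge 0$,
\begin{equation*}
\log\inf_{n\ge 0}e^{S_n^+(\theta^m\omega)-n(\lambda-\epsilon/2)}=\inf_{k\ge m}\bigl(S_k^+(\omega)-k(\lambda-\epsilon/2)\bigr)-\bigl(S_m^+(\omega)-m(\lambda-\epsilon/2)\bigr),
\end{equation*}
and for any $\eta>0$ Birkhoff supplies $N(\omega,\eta)$ with $|S_k^+-k\lambda|\le k\eta$ for $k\ge N$. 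Both the infimum over $k\ge m$ and the subtracted term then lie in $[m(\epsilon/2-\eta),m(\epsilon/2+\eta)]$ for $m\ge N$, so their difference is $O(m\eta)$. Since $\eta$ is arbitrary, $(1/m)\log\tilde\kappa_0(\theta^m\omega)\to 0$ as $m\to+\infty$; the case $m\to-\infty$ is symmetric via the backward Birkhoff average. Lemma~\ref{tempering} applied to the tempered function $\tilde\kappa_0$ then yields a slowly varying $\kappa_0\le\tilde\kappa_0$, and absorbing the residual $\epsilon/2$ gap gives $\prod_{k=0}^{n-1}e^{\lambda_{\pm k}}\ge\kappa_0(\omega)e^{n(\lambda-\epsilon)}$. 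The hard part is the temperedness verification, which requires careful control of how the random window on which the infimum is attained shifts under $\theta^m$.
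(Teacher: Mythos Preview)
Your proposal follows the same overall strategy as the paper. For $K_0$ and $a_0$ you invoke exactly the facts the paper packages as (\ref{eq:temp-1}) and then apply Lemma~\ref{tempering}. For $\kappa_0$ you define the same auxiliary function --- your $\tilde\kappa_0$ is the reciprocal of the paper's $\Gamma_\pm$, up to the $\epsilon/2$ versus $\epsilon$ slack --- and then apply Lemma~\ref{tempering} once temperedness is known.

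The one real difference is how temperedness of $\tilde\kappa_0$ (equivalently $\Gamma_\pm$) is checked. The paper bounds the \emph{one-step} ratio: $\Gamma_\pm(\theta\omega)/\Gamma_\pm(\omega)$ lies between $\min\{1,e^{\lambda-\lambda_0(\omega)-\epsilon}\}$ and $\max\{1,e^{\lambda_0(\omega)-\lambda+\epsilon}\}$, so its logarithm is in $L^1$; Birkhoff applied to this $L^1$ coboundary telescopes to give $\frac{1}{n}\log\Gamma_\pm(\theta^{\pm n}\omega)\to c$ for some constant $c$, and $\Gamma_\pm\ge 1$ forces $c=0$. This handles both time directions and both $\Gamma_+,\Gamma_-$ with no casework. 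Your direct computation via the cocycle identity $S_n^+(\theta^m\omega)=S_{m+n}^+(\omega)-S_m^+(\omega)$ correctly treats the $S^+$-piece of $\tilde\kappa_0$ as $m\to+\infty$, but note that $\tilde\kappa_0$ is a minimum of two infima (over $S_n^+$ and over $S_n^-$), and temperedness requires controlling \emph{each} under \emph{both} $m\to+\infty$ and $m\to-\infty$. The cross cases --- for instance the $S_n^-$-piece as $m\to+\infty$ --- do not reduce to a single clean cocycle identity; one has to split the sum at the origin and combine forward and backward Birkhoff estimates. This can be carried out along your lines, but the paper's one-step-ratio argument is the cleaner packaging and what you should supply if asked to fill in the details.
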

\begin{proof}
	The existence of $K_0$ and $a_0$ follows from (\ref{eq:temp-1}) and Lemma~\ref{tempering}. For the existence of $\kappa_0$, we define 
	$$ \Gamma_\pm(\omega)=\sup\left\{\frac{e^{\lambda n}e^{-\epsilon n}}{\prod_{k=0}^{n-1}(1+\lambda_0(\theta^{\pm k}\omega))}, \, n\ge 0\right\},$$
	where the term corresponding to $n=0$ is defined to be $1$. 
	Note that 
	$$ \frac{e^{\lambda - \epsilon}}{e^{\lambda_0(\omega)}} \le \frac{\Gamma_\pm(\theta\omega)}{\Gamma_\pm(\omega)} \le  \frac{e^{\lambda_0(\omega)}}{e^{\lambda - \epsilon}},$$
	we have that $\log \frac{\Gamma_\pm(\theta\omega)}{\Gamma_\pm(\omega)} \in L^1(dP(\omega))$. Applying the Birkhoff ergodic theorem to $\log \frac{\Gamma_\pm(\theta\omega)}{\Gamma_\pm(\omega)}$, we obtain that $\Gamma_{\pm}(\omega)$ is tempered. 

	Applying Lemma~\ref{tempering} to $\frac{1}{\Gamma_{\pm}(\omega)}$, we obtain positive functions $\kappa_0^\pm(\omega)\le \frac{1}{\Gamma_{\pm}(\omega)}$ with 
	$$e^{-\epsilon} \le (\kappa^\pm_0(\theta \omega))/(\kappa_0^\pm (\omega)) \le e^{\epsilon}. $$
	Furthermore, 
	$$ \exp{\sum_{k=0}^{n-1}\lambda_{\pm k}}  \ge \frac{1}{\Gamma_\pm(\omega)} e^{\lambda n} e^{-\epsilon n} \ge  \kappa^\pm_0(\omega) e^{n(\lambda-\epsilon)}. $$
	The lemma follows by taking $\kappa_0  = \min\{\kappa_0^+, \kappa_0^-\}$.
\end{proof}
Going forward, we will choose the functions $a_0$, $K_0$, $\kappa_0$ using Lemma~\ref{slowvarying}. However, the parameter $\epsilon$ with which we apply the lemma will be decided later. We will also denote $a_j=a_0 \circ \theta^j$, $K_j= K_0\circ \theta^j$, $\kappa_j=\kappa_0\circ \theta^j$.  

Let $P_j$ be a map from $\R^d\times \R^d$ to a neighbourhood of $(x_j, v_j)$ defined by
\begin{equation}
	\label{eq:Pj}
	P_j(s,u) = Q(x_j, v_j)(s,u)+(x_j,v_j). 
\end{equation}
For $r>0$, we define the \emph{local diffeomorphisms} $ \tilde\Phi_j: B(0,r)\cap \R^{2d} \to \R^{2d}$ by
$$ \tilde\Phi_j = P_{j+1}^{-1}\circ \Phi_j \circ P_j|B(0, r).$$

The local diffeomorphisms satisfy the following properties:
\begin{itemize}
	\item $\tilde\Phi_j(0,0)=(0,0)$.
	\item $D\tilde\Phi_j(0,0)=\diag\{M_j, N_j\}$. 
	\item Since $\|\Phi_j\|_{1+ \alpha}\le K_j$ and $\|Q_j\|, \|Q_j^{-1}\|\le K_ja_j^{-\frac12}$, we have $\|\tilde\Phi_j\|_{1+\alpha} \le K_j^3 a_j^{-1}$. 
\end{itemize}
This setting is the most general setting on which the Hadamard-Perron theorem can be established. 

Given $\rho_j>0$ to be chosen later, we denote $\sigma_j = \|D\tilde\Phi_j(u,s)-D\tilde\Phi_j(0,0)\|_{B(0, \rho_j)}$. For $\gamma>0$, we define the unstable and stable  cone field by 
$$ C^u_{\gamma}=\{(s,u)\in \R^{2d}; \|s\|\le \gamma \|u\|\}, \quad C^s_{\gamma}=\{(s,u)\in \R^{2d}; \|u\|\le \gamma \|s\|\}.$$
We say a set $W\subset \R^{2d}$ is $(u, \gamma, \rho_j, j)-$admissible if there exists a $\gamma-$Lipshitz map $\varphi: \R^d\cap B(0,\rho_j) \to \R^d$ such that $\varphi(0)=0$ and $ W=\{ (\varphi(u), u)\}$. A set $W\subset \R^{2d}$ is $(s, \gamma, \rho_j, j)-$admissible if there exists a $\gamma-$Lipshitz map $\varphi: \R^d\cap B(0,\rho_j) \to \R^d$ such that $\varphi(0)=0$ and $W=\{(s, \varphi(s))\}$. Denote $D_\rho = \{(s,u) \in \R^{2d}; \|s\| \le \rho, \|u\|\le \rho\}$.

\begin{proposition}\label{graph-trans}
Assume that for $j\le 0$, the parameters $\rho_j$ and $\sigma_j$ satisfy the following conditions:
\begin{itemize}
	\item $e^{-\lambda_j}+2\sigma_j <e^{-\lambda_j/2}$,
	\item $e^{\lambda_j} -2\sigma_j > e^{\lambda_j/2}$,
	\item $e^{\lambda_j/2} \rho_{j}\ge \rho_{j+1}$.
\end{itemize}
Then the following hold:
\begin{enumerate}
	\item For any $(s,u)\in D_{\gamma_j, \rho_j}$, we have $D\tilde\Phi_j(s,u)C^u_1 \subset C^u_{\gamma_j}$, where $\gamma_j = e^{-\lambda_j/2}$. 
	\item If  $W$ is a $(u,1, \rho_j, j)-$admissible set, then $\cG(W) :=\tilde\Phi_j(W)\cap D_{\rho_{j+1}} $ is a $(\gamma_j, \rho_{j+1}, j+1)-$admissible set. Furthermore, for $(s_j, u_j)\in W$, let $(s_{j+1},u_{j+1})=\tilde\Phi_j(s_j, u_j)$, we have 
	$$ \|u_{j+1}\| \ge e^{\lambda_j/2} \|u_j\|. $$
	\item If  $W$ is a $(s,1, \rho_j, j)-$admissible set, then $\cG^{-1}(W) :=\tilde\Phi_{j-1}^{-1}(W)\cap D_{\rho_{j-1}} $ is a $(\gamma_{j-1}, \rho_{j-1}, j-1)-$admissible set. Furthermore, for $(s_j, u_j)\in W$, let $(u_{j-1},s_{j-1})=\tilde\Phi_{j-1}^{-1}(s_j, u_j)$, we have 
	$$ \|s_{j+1}\| \ge e^{\lambda_j/2} \|s_j\|. $$
	\item Let $(W_j)_{j=-\infty}^0$ be a sequence of $(u, 1, \rho_j, j)-$admissible sets, then the map $\tilde\Phi_j$ induces a map $\cG(W)_{j+1}:=\cG(W_{j-1})$ on the set of such sequences. We have that this sequence is a contraction in Lipshitz norm, and there exists a unique limit sequence. Furthermore, this limit sequence is $C^1$.
\end{enumerate}
\end{proposition}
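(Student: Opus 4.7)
The plan is to prove this as a quantitative Hadamard–Perron theorem for the sequence $\{\tilde\Phi_j\}$, exploiting that each $\tilde\Phi_j$ is a small perturbation of the hyperbolic linear diagonal map $\mathrm{diag}(M_j,N_j)$ on the ball $D_{\rho_j}$. Throughout, I write $\tilde\Phi_j(u,s)=(U_j(u,s),S_j(u,s))=(M_ju,N_js)+E_j(u,s)$ where, by the choice of $\rho_j$, the remainder $E_j$ has Lipshitz norm at most $\sigma_j$ on $D_{\rho_j}$ and $E_j(0,0)=0$.

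For statement (1), fix $(u,s)\in D_{\rho_j}$ and $(\delta u,\delta s)\in C^u_1$. Set $(\delta u',\delta s')=D\tilde\Phi_j(u,s)(\delta u,\delta s)$. Using $\|M_j^{-1}\|^{-1}\ge e^{\lambda_j}$, $\|N_j\|\le e^{-\lambda_j}$, $\|\delta s\|\le\|\delta u\|$, and $\|(\delta u,\delta s)\|\le 2\|\delta u\|$, one obtains
\[
\|\delta u'\|\ge (e^{\lambda_j}-2\sigma_j)\|\delta u\|\ge e^{\lambda_j/2}\|\delta u\|, \qquad \|\delta s'\|\le (e^{-\lambda_j}+2\sigma_j)\|\delta u\|\le e^{-\lambda_j/2}\|\delta u\|,
\]
so $\|\delta s'\|\le e^{-\lambda_j}\|\delta u'\|\le\gamma_j\|\delta u'\|$. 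This proves cone invariance and simultaneously records the uniform expansion of the $u$-component and contraction of the $s$-component for in-cone tangents, which are the workhorses for the rest of the argument.

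For statement (2), given a $(u,1,\rho_j,j)$-admissible graph $W=\{(u,\varphi(u))\}$, the secant of $u\mapsto(u,\varphi(u))$ lies in $C^u_1$. Applying the fundamental theorem of calculus to $\hat U_j(u):=U_j(u,\varphi(u))$ and $\hat S_j(u):=S_j(u,\varphi(u))$ and invoking the pointwise cone bounds from Step 1 along the straight segment between any two points of $B(0,\rho_j)$ yields $\|\hat U_j(u_1)-\hat U_j(u_2)\|\ge e^{\lambda_j/2}\|u_1-u_2\|$ and $\|\hat S_j(u_1)-\hat S_j(u_2)\|\le e^{-\lambda_j/2}\|u_1-u_2\|$. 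Injectivity and the hypothesis $e^{\lambda_j/2}\rho_j\ge\rho_{j+1}$ show that $\hat U_j(B(0,\rho_j))\supset B(0,\rho_{j+1})$, so $\varphi'(u'):=\hat S_j(\hat U_j^{-1}(u'))$ defines a $\gamma_j$-Lipshitz map on $B(0,\rho_{j+1})$ with $\varphi'(0)=0$; the graph is automatically contained in $D_{\rho_{j+1}}$. The expansion estimate $\|u_{j+1}\|\ge e^{\lambda_j/2}\|u_j\|$ is exactly the lower bound on $\hat U_j$ applied to $u_2=0$. Statement (3) is the mirror image: $\tilde\Phi_j^{-1}$ satisfies the symmetric cone inequalities with the roles of $u,s$ swapped (using $\|N_j^{-1}\|^{-1}\ge e^{\lambda_j}$, $\|M_j\|^{-1}\cdot$ ...), and the identical argument goes through.

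For statement (4), I will set up the standard fixed-point scheme. Endow the space $\mathcal{W}$ of sequences of $(u,1,\rho_j,j)$-admissible graphs $(\varphi_j)_{j\le 0}$ with the metric $d(\varphi,\tilde\varphi)=\sup_{j\le 0}\rho_j^{-1}\|\varphi_j-\tilde\varphi_j\|_{C^0(B(0,\rho_j))}$. Parts (1)–(2) show $\mathcal{G}$ maps $\mathcal{W}$ into itself. To see contractivity, fix level $j$, pick $u'\in B(0,\rho_j)$, and let $u,\tilde u\in B(0,\rho_{j-1})$ be the pre-images under $\hat U_{j-1}$ determined by $\varphi_{j-1}$ and $\tilde\varphi_{j-1}$ respectively; expressing $(\mathcal{G}\varphi)_j(u')-(\mathcal{G}\tilde\varphi)_j(u')$ through $S_{j-1}$ and using the Lipshitz bounds from Step 2 plus $\|N_{j-1}\|\le e^{-\lambda_{j-1}}$ gives a contraction factor of order $e^{-\lambda_{j-1}/2}$, hence a unique fixed sequence $W^\infty=(\varphi_j^\infty)$ by Banach's theorem. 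For the $C^1$ conclusion, I lift $\mathcal{G}$ to the bundle whose fiber over each $W\in\mathcal{W}$ consists of $d$-dimensional subspaces in $C^u_1$ attached to $W$; part (1) produces a self-map of this bundle, and a Grassmannian computation analogous to Step 2, again using the hyperbolic spread $e^{\lambda_j/2}$ vs $e^{-\lambda_j/2}$, shows this induced map is a fiberwise contraction. Its fixed point selects a continuous field of tangent planes along $W^\infty$, which a standard argument (e.g.\ \cite{BP07}) identifies as the genuine tangent to each $\varphi_j^\infty$, giving $C^1$ regularity. The main technical obstacle is the bookkeeping in step (4): keeping the contraction factors uniform across levels requires the weighted sup-norm above and careful use of the slow-varying hypothesis $e^{\lambda_j/2}\rho_j\ge\rho_{j+1}$ to ensure that pre-images under $\hat U_{j-1}$ land in the correct domains for the comparison.
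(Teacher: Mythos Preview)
Your treatment of parts (1)--(3) is essentially the same as the paper's, only spelled out in more detail; the cone inequalities and the graph-transform bookkeeping are standard and you have them right.

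The gap is in part (4). You obtain a level-$j$ contraction factor ``of order $e^{-\lambda_{j-1}/2}$'' and then invoke Banach's fixed point theorem. But in this non-uniformly hyperbolic setting $\lambda_j$ is \emph{not} bounded away from $0$ (indeed $\lambda_0\le 1$ and only $\lambda_0\ge a(\omega)>0$ is known, with $a$ merely tempered), so $\sup_j e^{-\lambda_{j-1}/2}=1$ and $\cG$ is not a strict contraction on your weighted sup-metric. The third hypothesis $e^{\lambda_j/2}\rho_j\ge\rho_{j+1}$ gives only the lower bound $\rho_{j-1}/\rho_j\ge e^{-\lambda_{j-1}/2}$, which is the wrong direction to rescue the argument; with the weight $\rho_j^{-1}$ the effective factor $\tfrac{\rho_{j-1}}{\rho_j}e^{-\lambda_{j-1}/2}$ is still not uniformly $<1$. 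Your closing remark about ``keeping the contraction factors uniform across levels'' names the difficulty but does not resolve it.

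The paper handles this differently: it accepts that $\cG$ itself is not a uniform contraction and instead shows that some iterate $\cG^n$ is. By Lemma~\ref{slowvarying} one has $\prod_{k=-n}^{-1} e^{-\lambda_{j+k}/2}\le \kappa_j^{-1}e^{-n(\lambda-\epsilon)}$, and since $\kappa_j$ is tempered one can choose $n$ so that this product is uniformly $<1$ in $j$. This ergodic averaging step is the missing ingredient in your argument; once you insert it (or equivalently design a weight built from the tempering kernel rather than $\rho_j^{-1}$), the rest of your fixed-point and $C^1$-regularity outline goes through as you sketched.
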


\begin{proof}
	Let $(\delta s,\delta u)\in \R^{2d}$  satisfy $\|\delta s\|\le \|\delta u\|$, and let $(\delta s',\delta u')=D\Phi_j(s,u) (\delta s, \delta u)$. We have 
	\begin{equation}
		\label{eq:expansion}
		\|\delta  s'\| \le e^{-\lambda_j}\|\delta  s\| + \sigma_j \|(\delta s, \delta u)\|, \quad \|\delta  u'\| \ge e^{\lambda_j}\|\delta  u\| - \sigma_j \|(\delta s, \delta u)\|.
	\end{equation}  
	Using $\|\delta s\|\le \|\delta u\|$, we obtain $\|\delta s'\| \le \gamma_j\|\delta u'\|$, where $\gamma_j = \frac{e^{-\lambda_j}+2\sigma}{e^{\lambda_j}-2\sigma}$. This proves the first statement of our proposition. 

	To show the image of a unstable admissible manifold is still admissible,  the calculation is similar and we refer to Proposition 7.3.5 of \cite{BP07}. The expansion in $u$ component is a consequence of \eqref{eq:expansion}. This proves the second statement. The third statement follows from a symmetric argument. 

	For the fourth statement, by Proposition 7.3.6 of \cite{BP07}, the map $\cG (W_j)$ is a contraction with factor $e^{-\lambda_{j-1}/2}$ at the $j-$th component. By Lemma~\ref{slowvarying}, we know that $\prod_{k=-n}^{-1} e^{-\lambda_{j+k}/2} \le \kappa_j^{-1} e^{n(-\lambda+\epsilon)}$. For $\epsilon <\lambda$, there exists $n>0$ such that all   $\prod_{k=-n}^{-1} e^{-\lambda_{j+k}/2}<1$. This implies that $\cG^n$ is a uniform contraction. 
\end{proof}

We can choose  parameters such that the conditions of Proposition~\ref{graph-trans} is satisfied. First, we have the following lemma:
\begin{lemma}\label{rhoj}
Given a positive function $\rho_0:\Omega\to \R$, we define $\rho_j(\omega) = \rho_0 \prod_{k=j}^{-1} e^{-\lambda_j/4}$ for all $j\le -1$. Then for any positive function $R:\Omega \to \R$ such that $e^{-\epsilon} \le R\circ \theta/R \le e^{\epsilon}$, there exists $\rho_0$ such that 
$$ \rho_j \le R\circ \theta^j, \, j \le -1 \text{ and } e^{-2\epsilon} \le \frac{\rho_0\circ \theta}{\rho_0} \le e^{2\epsilon}. $$
\end{lemma}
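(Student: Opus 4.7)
The plan is to construct $\rho_0$ by first identifying a function $g$ that saturates the constraints, then smoothing $g$ via a tempering-kernel-type construction with exponential rate $2\epsilon$ — chosen strictly larger than the sub-exponential rate of $g$ itself.

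First, I would rewrite each constraint $\rho_j(\omega)\le R(\theta^j\omega)$ as
$$ \rho_0(\omega)\le R(\theta^j\omega)\prod_{k=j}^{-1}e^{\lambda_k(\omega)/4},\quad j\le -1, $$
and define
$$ g(\omega):=\inf_{j\le -1}\left\{R(\theta^j\omega)\prod_{k=j}^{-1}e^{\lambda_k(\omega)/4}\right\}. $$
By Lemma~\ref{slowvarying} the product is bounded below by $\kappa_0(\omega)^{1/4}e^{|j|(\lambda-\epsilon)/4}$, and by the assumed slow variation of $R$ the factor $R(\theta^j\omega)$ is bounded below by $e^{-\epsilon|j|}R(\omega)$, so every term in the infimum is at least $\kappa_0(\omega)^{1/4}R(\omega)\,e^{|j|(\lambda-5\epsilon)/4}$. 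Provided $\epsilon$ is chosen small enough in Lemma~\ref{slowvarying} (say $\epsilon<\lambda/5$, which is free since the parameter there is arbitrary), the terms tend to $+\infty$ as $|j|\to\infty$, the infimum is attained at some finite $j^{\ast}(\omega)$, and $g(\omega)>0$.

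Next, I set
$$ \rho_0(\omega):=\inf_{n\in\Z}e^{2\epsilon|n|}g(\theta^n\omega). $$
Taking $n=0$ in the infimum gives $\rho_0\le g$, and so the admissibility $\rho_j = \rho_0\prod_{k=j}^{-1}e^{-\lambda_k/4}\le R(\theta^j\omega)$ for $j\le -1$ is immediate from the definition of $g$. The slow variation $e^{-2\epsilon}\le \rho_0(\theta\omega)/\rho_0(\omega)\le e^{2\epsilon}$ follows by substituting $m=n+1$ in the definition of $\rho_0(\theta\omega)$ and using $\bigl||m-1|-|m|\bigr|\le 1$. For positivity, the lower bound above gives $g(\theta^n\omega)\ge \kappa_0(\theta^n\omega)^{1/4}R(\theta^n\omega)e^{(\lambda-5\epsilon)/4}\ge C(\omega)e^{-(5\epsilon/4)|n|}$ by the slow variations of $\kappa_0$ and $R$ (factor $\epsilon$ each), whence $e^{2\epsilon|n|}g(\theta^n\omega)\ge C(\omega)e^{(3\epsilon/4)|n|}\ge C(\omega)>0$ uniformly in $n$.

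The main technical point — more bookkeeping than difficulty — is the ``one-epsilon of slack'' between the input factor $\epsilon$ and the output factor $2\epsilon$: this is exactly what is needed to ensure the kernel exponent $2\epsilon$ strictly dominates the worst-case sub-exponential rate $5\epsilon/4$ of $g$, so the infimum defining $\rho_0$ is bounded away from zero rather than collapsing. Beyond that, the construction is a standard application of the tempering-kernel idea. No condition at $j=0$ needs to be verified separately, since the lemma only requires $\rho_j\le R\circ\theta^j$ for $j\le -1$.
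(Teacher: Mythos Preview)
Your argument is correct, but it is considerably more elaborate than the paper's. The paper simply writes down an explicit choice: set $\rho_0 = R\,\kappa_0^{-1/4}$ (up to a sign in the exponent of $\kappa_0$, the paper has a typo here). Then the slow variation of $\rho_0$ follows immediately from that of $R$ and of $\kappa_0$ (each with factor $e^{\pm\epsilon}$, so the product varies by at most $e^{\pm 2\epsilon}$), and the bound $\rho_j\le R\circ\theta^j$ follows because $\prod_{k=j}^{-1}e^{-\lambda_k/4}\le \kappa_0^{-1/4}e^{-|j|(\lambda-\epsilon)/4}$ from Lemma~\ref{slowvarying}, so $\rho_j\le R\,e^{-|j|(\lambda-\epsilon)/4}\le R\,e^{-\epsilon|j|}\le R\circ\theta^j$ once $\epsilon$ is small relative to $\lambda$.

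Your route---first defining $g$ as the pointwise largest admissible $\rho_0$, then damping it via the kernel $\inf_n e^{2\epsilon|n|}g(\theta^n\cdot)$---is the standard tempering-kernel machinery applied in full generality. It has the advantage of being systematic (no need to guess the right formula) and would survive in settings where no explicit $\rho_0$ is available. The paper's approach, by contrast, exploits the fact that the only obstruction to admissibility is controlled by $\kappa_0$, so one can cancel it by hand; this gives a two-line proof but is less transparent about why it works. Both require the same smallness condition on $\epsilon$ relative to $\lambda$.
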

\begin{proof}
	By Lemma~\ref{slowvarying}, 
	$$ \rho_j = \rho_0 \prod_{k=j}^{-1} e^{-\lambda_j/4} \le \rho_0 \kappa_0^{\frac14} e^{-\frac{(\lambda-\epsilon)}{4}|j|}.$$
	Take $\rho_0 = R \kappa^{-\frac14}$, we have that for $\lambda > 2\epsilon$,  
	$$ \rho_j \le R e^{-(\lambda-\epsilon)|j|} \le R e^{-\epsilon |j|} \le R\circ \theta^j. $$
\end{proof}

\begin{proposition}\label{parameters}
There exists $\rho_0=\rho_0(\omega)>0$ satisfying 
$$ e^{-\epsilon}\le \frac{\rho_0(\theta\omega)}{\rho_0(\theta)} \le e^{\epsilon},$$
such that for $\rho_j(\omega) = \rho_0 \prod_{k=j}^{-1} e^{-\lambda_j/4}$, the conditions of Proposition~\ref{graph-trans} are satisfied for all $j\le 0$. 
\end{proposition}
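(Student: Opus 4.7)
The plan is to verify the three conditions of Proposition~\ref{graph-trans} in turn, noting that the third is built directly into the recursive definition of $\rho_j$. From $\rho_j = \rho_0 \prod_{k=j}^{-1} e^{-\lambda_k/4}$ one reads off $\rho_{j+1}/\rho_j = e^{\lambda_j/4} \le e^{\lambda_j/2}$ (since $\lambda_j \ge 0$), so condition (3) of Proposition~\ref{graph-trans} holds regardless of how $\rho_0$ is chosen.

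Next, I would observe that condition (1), $e^{-\lambda_j} + 2\sigma_j < e^{-\lambda_j/2}$, is the more restrictive of the first two, because for $\lambda_j \ge 0$ one has $e^{-\lambda_j/2} - e^{-\lambda_j} \le e^{\lambda_j} - e^{\lambda_j/2}$. The $C^{1+\alpha}$ estimate $\|\tilde\Phi_j\|_{1+\alpha} \le K_j^3 a_j^{-1}$ yields $\sigma_j \le K_j^3 a_j^{-1} \rho_j^\alpha$, and using that $\lambda_j \in (0,1]$ together with $e^{-\lambda_j/2} - e^{-\lambda_j} \ge c\lambda_j$ for an absolute $c > 0$, condition (1) reduces to the pointwise requirement
$$\rho_j \le R(\theta^j\omega), \qquad R(\omega) := \left(\frac{c\,\lambda_0(\omega)\,a_0(\omega)}{2 K_0(\omega)^3}\right)^{1/\alpha}.$$
The content of the proposition then becomes to exhibit a slow-varying $\rho_0$ for which $\rho_j \le R\circ\theta^j$ holds for every $j\le 0$.

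The core step is to show that $R$ is tempered, so that Lemma~\ref{tempering} produces a slow-varying $\tilde R \le R$ whose multiplicative ratios under $\theta$ lie in $[e^{-\epsilon},e^{\epsilon}]$, after which Lemma~\ref{rhoj}, applied to a scalar multiple of $\tilde R$ chosen so as to enforce the $j=0$ inequality as well, supplies the required $\rho_0$. Since $a_0$ and $K_0$ are already slow-varying by Lemma~\ref{slowvarying}, temperedness of $R$ reduces to that of $\lambda_0$, and by Lemma~\ref{tempered} (using $\lambda_0 \le 1$) to the integrability $\log(1/\lambda_0) \in L^1(dP)$. The inequality $\log(1+y) \ge \min(y/2,\log 2)$ applied to $y = m(\bbU-\bbS)/(1+C_0^\omega) \ge a(\omega)/(1+C_0^\omega)$ gives $\log(1/\lambda_0) \le \log(1+C_0^\omega) + \log(1/a(\omega)) + \mathrm{const}$; Assumption~5 controls the first term, and Proposition~\ref{integrability} controls the second via $\log(1/a) \le a^{-1}$ (valid for $a\le 1$) together with $\int a^{-1}\,dP < A(F)$. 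This last integrability of $a^{-1}$ is the essential and only nontrivial ingredient: without it, one could not rule out $\lambda_0$ decaying too rapidly along the orbit, and no slow-varying lower bound for $R$ would exist, ruining the tempering step on which the entire choice of $\rho_0$ depends.
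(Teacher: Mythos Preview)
Your argument is correct and follows essentially the same route as the paper: reduce conditions (1)--(2) to a smallness requirement on $\sigma_j$, use the $C^{1+\alpha}$ bound $\sigma_j \le K_j^3 a_j^{-1}\rho_j^\alpha$ to rewrite this as $\rho_j \le R\circ\theta^j$ for a suitable $R$, and then invoke Lemma~\ref{rhoj}. The only difference is a minor shortcut: rather than keeping $\lambda_0$ in $R$ and then proving $\lambda_0$ is tempered via the integrability of $a^{-1}$, the paper substitutes the lower bound $\lambda_j \ge a_j/(2K_j)$ (which follows from $a_j \le m(\bbU_j-\bbS_j)$, $K_j \ge 1+C_j$, and $\log(1+x)\ge x/2$ for $0\le x\le 1$) directly into the smallness condition, arriving at $\rho_j^\alpha \le \tfrac{1}{20}\,a_j^2/K_j^4$; since $a_0$ and $K_0$ are already slow-varying by construction, $R = (\tfrac{1}{20}\,a_0^2/K_0^4)^{1/\alpha}$ is slow-varying without a separate tempering step.
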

\begin{proof}
	It's easy to see that the first 2 conditions of Proposition~\ref{graph-trans} are satisfied if $\sigma_j < \lambda_j/8$. Since $ \sigma_j \le \|\tilde\Phi_j\|_{1+ \alpha} \rho_j^\alpha= K_j^3 a_j^{-1}\rho_j^\alpha$ and $ a_j/2K_j \le \lambda_j$, it suffice to have 
	$$ \rho_j^\alpha \le \frac{1}{20} \frac{a_j^2}{K_j^4}, \quad j \le 0. $$
	Let $R = \frac{1}{20} \left(\frac{a_0^2}{K_0^4}\right)^{1/\alpha}$, by re-choosing the functions $a_0$, $K_0$ using Lemma~\ref{slowvarying} and parameter $\epsilon/(12\alpha)$, we can guarantee $e^{-\epsilon/2} \le R\circ \theta/R \le e^{\epsilon/2}$. By Lemma~\ref{rhoj}, there exists $\rho_0$ such that $\rho_j \le R\circ \theta^j$ and $e^{-\epsilon} \le \rho_0\circ \theta/\rho_0 \le e^{\epsilon}$. 
\end{proof}

\section{Local smoothness of the viscosity solutions}
\label{sec:smoothness}

Roughly speaking, our proof of Theorem~\ref{smooth} is based on the following observation: near a hyperbolic orbit, any orbits that does not deviate exponentially in the backward time from the given hyperbolic orbit must be contained in the unstable manifold. For $j\le 0$, we denote $W_j^-=\{(x, \nabla_x \psi^-(x,j)\}$ and $\tilde{W}_j = P_j^{-1} W_j^-$. We will first show that the orbits contained in $\tilde{W}_j$ does not ``expand exponentially'' in backward time, with a technical assumption that, at the last iterate, the orbit's unstable component is dominated by its stable component.  

\begin{proposition}\label{small-expansion}
The manifolds $\tilde{W}_j$ has the following properties:
\begin{enumerate}
	\item $\tilde{W}_j$ is a family of invariant sets for the local diffeormorphisms $\tilde\Phi_j$, in that 
	$$ \tilde{W}_{j+1} \subset \tilde\Phi_j\tilde{W}_j. $$
	\item There exists $\tilde{C}_j>0$ and $R_j >0$ satisfying 
	$$e^{-\epsilon} \le \frac{\tilde{C}_{j+1}}{\tilde{C}_j}, \frac{R_{j+1}}{R_j}  \le e^{\epsilon} ,$$
	such that the following hold. Given $0<\gamma <1$ and $k\ge 0$, let  $(s_j, u_j) \in \tilde{W}_j$ be a backward orbit satisfying
	\begin{enumerate}
		\item $\|u_{j-i}\| + \|s_{j-i}\| \le R_{j-i}$ for all $0\le i \le k$,
		\item $\|u_{j-k}\| \le \gamma\|s_{j-k}\|$,
	\end{enumerate}	  
	then we have 
	\begin{equation}
		\label{eq:growth-bnd} 
		\|s_{j-k}\| \le (1-\gamma)^{-1} \tilde{C}_j e^{\epsilon k} \|s_j\| 
	\end{equation}
	for all $k \ge 0 $ such that $\|u_j\|' + \|s_j\|'\le R_j $. 
\end{enumerate}
\end{proposition}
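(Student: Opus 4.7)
The plan is to handle Part~1 via a direct Lax--Oleinik semigroup argument, and to obtain Part~2 by combining the strict nondegeneracy of Proposition~\ref{strict-nondeg} with monotonicity of the difference $D(y,n):=\psi^-(y,n)-\psi^+(y,n)$ along backward minimizers, then converting from $(y,w)$ to $(u,s)$ via Lemma~\ref{stable-est}.

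For the invariance in Part~1, I would start with a point $(y_{j+1},w_{j+1})\in W_{j+1}^-$ at which $\nabla\psi^-(\cdot,j+1)$ exists, so that $w_{j+1}=\nabla\psi^-(y_{j+1},j+1)+b$. Proposition~\ref{uniqueness} makes its backward orbit a backward minimizer, so $(y_j,w_j):=\Phi_j^{-1}(y_{j+1},w_{j+1})$ is a one-step predecessor of a minimizer. Lemma~\ref{properties}(4)--(5) then forces $-\partial_1 A_{j,j+1}(y_j,y_{j+1})=w_j-b$ to be a subderivative of $\psi^-(\cdot,j)$ at $y_j$, and the Lax--Oleinik equality $\psi^-(y_{j+1},j+1)=\psi^-(y_j,j)+A_{j,j+1}(y_j,y_{j+1})$ pins it down as the unique such subderivative. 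Hence $(y_j,w_j)\in W_j^-$, and pulling back through $P_j$ yields $\tilde W_{j+1}\subset\tilde\Phi_j\tilde W_j$.

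For Part~2, the backbone of the argument is the chain
\[
 b(c_{j-k})\,|y_{j-k}-x_{j-k}|^2 \;\le\; D(y_{j-k},j-k)-D(x_{j-k},j-k) \;\le\; D(y_j,j)-D(x_j,j).
\]
The left inequality is Proposition~\ref{strict-nondeg} applied with the parameter $c_{j-k}$, which also supplies $\int b(c)^{-1/2}\rho(c)\,dc<\infty$. The middle inequality follows by combining the minimizer identity $\psi^-(y_j,j)-\psi^-(y_{j-k},j-k)=A_{j-k,j}(y_{j-k},y_j)$ with the forward Lax--Oleinik inequality $\psi^+(y_{j-k},j-k)-\psi^+(y_j,j)\le A_{j-k,j}(y_{j-k},y_j)$, noting $D(x_{j-k},j-k)=D(x_j,j)$ because the global minimizer is both forward and backward minimizing. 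For the right endpoint I would combine the Alexandrov-type bounds $d^2\psi^-(x_j,j)\le\bbU$ and $d^2(-\psi^+)(x_j,j)\le\bbS$ from Lemma~\ref{comparison} with the graph identity $(\bbU-\bbS)^{1/2}s_j=\bbU(y_j-x_j)-(\nabla\psi^-(y_j,j)-\nabla\psi^-(x_j,j))$ coming from $(y_j,w_j)\in W_j^-$. Since $\nabla\psi^-(x_j,j)=\nabla\psi^+(x_j,j)=v_j-b$, the unstable quadratic part of $\psi^-$ is cancelled in $D$ by the matching stable part of $\psi^+$, and the surviving excess of $D$ at $y_j$ is controlled precisely by $\|s_j\|^2$. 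This yields $D(y_j,j)-D(x_j,j)\le c\,\|s_j\|^2$. Feeding this back, and then using the cone hypothesis at $j-k$ together with Lemma~\ref{stable-est} (which gives $\|s_{j-k}\|\le(1-\gamma)^{-1}K_{j-k}^{1/2}|y_{j-k}-x_{j-k}|$) produces
\[
 \|s_{j-k}\| \;\le\; (1-\gamma)^{-1}\bigl(c\,K_{j-k}/b(c_{j-k})\bigr)^{1/2}\,\|s_j\|.
\]
The random prefactor $\bigl(K_{j-k}/b(c_{j-k})\bigr)^{1/2}$ is integrable under $P$ in view of $1+C_0\le K_0$ and the integrability furnished by Proposition~\ref{strict-nondeg}, so Lemma~\ref{tempering} applied exactly as in Lemma~\ref{slowvarying} turns it into a slowly varying cocycle $\tilde C_j$ losing at most the factor $e^{\epsilon}$ per iterate; the admissible radius $R_j$ ensuring that each $|y_{j-i}-x_{j-i}|$ stays inside the $r(F)$-ball of Proposition~\ref{strict-nondeg} is constructed by the same tempering device, as in Lemma~\ref{rhoj}.

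The main obstacle in this plan is the refinement of the upper bound $D(y_j,j)-D(x_j,j)\le c\,\|s_j\|^2$: the naive two-sided semi-concavity argument only gives a bound of order $(1+C_j)|y_j-x_j|^2\sim\|u_j\|^2+\|s_j\|^2$, which would contaminate the final estimate with an $\|u_j\|$ term and destroy the sub-exponential growth rate. Extracting the $\|s_j\|^2$ scaling requires the Green-bundle cancellation outlined above and is the technical heart of the proposition; carrying it through for merely semi-concave $\psi^\pm$, and keeping the constants measurable in $c$ so that the subsequent tempering is legitimate, is where most of the work lies.
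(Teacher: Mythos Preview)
Your plan for Part~1 and the backbone of Part~2 --- the lower bound via Proposition~\ref{strict-nondeg}, the monotonicity of $\psi^--\psi^+$ along backward minimizers, and the conversion at the $j-k$ endpoint via Lemma~\ref{stable-est} --- is exactly what the paper does (the paper packages these as Lemma~\ref{dec-barrier} and Proposition~\ref{strict-min}, and does not separately argue Part~1).

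The divergence is at the upper bound for $D(y_j,j)-D(x_j,j)$. The paper performs \emph{no} Green-bundle cancellation here; it uses precisely the ``naive'' semi-concavity estimate you dismiss, $\Delta\psi(y_j,j)-\Delta\psi(x_j,j)\le K_j\|y_j-x_j\|^2$, and then applies the \emph{upper} inequality of Lemma~\ref{stable-est}, $\|y_j-x_j\|\le 2a_j^{-1/2}\|s_j\|$, to pass to $\|s_j\|$. That coordinate change is where the $\|u_j\|$-contamination disappears, and it requires a stable-cone condition at time~$j$. Although the proposition's hypotheses only stipulate the cone at $j-k$ (a small imprecision in the paper's statement), the cone condition at $j$ does hold in the sole application, Step~one of the proof of Theorem~\ref{smooth}, where one starts from $\|u_0\|\le\|s_0\|$. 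So the issue you flag is handled in the paper not by sharpening the quadratic bound on $D$ but by Lemma~\ref{stable-est}, with the cone condition at $j$ as an extra (implicit) hypothesis.

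Your proposed Green-bundle cancellation is therefore unnecessary, and it is not clear it can be carried through. First, the bound $d^2\psi^-(x_j,j)\le\bbU$ from Lemma~\ref{comparison} is a $\liminf$ statement about the second subderivative, not a finite-increment inequality; only the crude $1$-semi-concavity of $\psi^-$ (and $C_j^\omega$-semi-concavity of $-\psi^+$) yields an honest second-order upper bound on $D(y_j,j)-D(x_j,j)$ for $y_j\ne x_j$. Second, even if one could upgrade to the sharp matrices $\bbU$ and $\bbS$, the result would be
\[
D(y_j,j)-D(x_j,j)\;\le\;\tfrac12\,(y_j-x_j)^T(\bbU-\bbS)(y_j-x_j)\;=\;\tfrac12\,\|u_j+s_j\|^2,
\]
which still carries the $u_j$ term. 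The graph identity you write down is correct but does not by itself isolate $\|s_j\|$. In short, the step you label ``the technical heart'' is in fact a one-line application of semi-concavity plus Lemma~\ref{stable-est}, once the cone condition at $j$ is acknowledged.
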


The proof of Proposition~\ref{small-expansion} relies on the following properties of the viscosity solutions. Denote, for the rest of this section,
$$ \psi(x,j) = \psi^-(x,j) - \psi^+(x,j). $$

\begin{lemma}\label{dec-barrier}
For $(y_0,w_0)\in W_0^-(x_0,v_0)$, denote $(y_j,w_j)=\Phi_{j,0}^{-1}(y_0,w_0)$ for all $j<0$. Then 
$$ \psi(y_j, j) - \psi(x_j, j) \le \psi(y_0, 0) - \psi(x_0, 0).$$
\end{lemma}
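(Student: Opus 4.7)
The symbol $\Delta\psi$ stands for the barrier $\psi^- - \psi^+$, which by Proposition~\ref{uniqueness}(3) attains its minimum on $\T^d$ (at each integer time) precisely at the global minimizer. The lemma is therefore the statement that the ``excess over the minimum'' of this barrier does not grow under backward iteration along orbits in $W_0^-$. My plan is to extract four Lax--Oleinik relations---three equalities and one one-sided inequality obtained from the variational characterizations of $\psi^-$ and $\psi^+$---and observe that the action terms cancel in pairs, leaving exactly the desired inequality.

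First, since $(y_0,w_0)\in W_0^-$ we have $w_0=\nabla\psi^-(y_0,0)+b$, so Proposition~\ref{uniqueness}(1) tells us that $(y_j,w_j)_{j\le 0}$ is a backward minimizer. The calibrating property of backward minimizers (equivalently, tracing through the Lax--Oleinik formula $\psi^-(y_0,0)=\inf_{z}\{\psi^-(z,j)+A_{j,0}(z,y_0)\}$ and noting the infimum is achieved at $z=y_j$) yields
$$\psi^-(y_j,j)=\psi^-(y_0,0)-A_{j,0}(y_j,y_0).$$
The same argument at the global minimizer $x_0$, whose full orbit is in particular a backward minimizer, gives
$$\psi^-(x_j,j)=\psi^-(x_0,0)-A_{j,0}(x_j,x_0).$$
For the forward solution, by Proposition~\ref{uniqueness}(2) combined with the fact that at the smooth minimum $x_0$ of $\psi^--\psi^+$ the two gradients coincide, the full orbit at $x_0$ is also a forward minimizer, so the analogous argument with $\tilde K$ produces
$$\psi^+(x_j,j)=\psi^+(x_0,0)-A_{j,0}(x_j,x_0).$$
In contrast, $(y_j,w_j)$ is a backward but not necessarily a forward minimizer, and the definition $\psi^+(y_j,j)=\sup_z\{\psi^+(z,0)-A_{j,0}(y_j,z)\}$ supplies only the one-sided bound
$$\psi^+(y_j,j)\ge \psi^+(y_0,0)-A_{j,0}(y_j,y_0).$$

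Forming $\Delta\psi=\psi^--\psi^+$ and subtracting, the two copies of $A_{j,0}(y_j,y_0)$ cancel---the equality for $\psi^-$ together with the reversed inequality for $-\psi^+$ produces a ``$\le$''---and the two copies of $A_{j,0}(x_j,x_0)$ cancel exactly. What remains is precisely
$$\Delta\psi(y_j,j)-\Delta\psi(x_j,j)\le \Delta\psi(y_0,0)-\Delta\psi(x_0,0),$$
as required. There is no substantial obstacle; the conceptual content is the \emph{asymmetry} between $\psi^-$ and $\psi^+$ along a backward orbit in $W_0^-$---an equality for $\psi^-$ but only the correctly oriented one-sided inequality for $\psi^+$---and it is this asymmetry that produces the monotonicity of the barrier asserted by the lemma.
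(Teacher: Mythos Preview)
Your proof is correct and follows essentially the same route as the paper's: you derive the same three equalities (two from the calibration of $\psi^-$ along the backward minimizers through $y_0$ and $x_0$, one from the calibration of $\psi^+$ along the global minimizer) and the same one-sided inequality for $\psi^+$ at $y_j$ from the variational definition, then combine them so that the action terms cancel. The only minor addition in your write-up is the explicit remark that $\nabla\psi^-(x_0,0)=\nabla\psi^+(x_0,0)$ at the minimum of $\Delta\psi$, which is what makes the full orbit of $(x_0,v_0)$ a forward minimizer as well; the paper simply invokes this fact.
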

\begin{proof}
	Since the orbits $(x_k,v_k)_{k\le 0}$ and $(y_k,w_k)_{k\le 0}$ are backward minimizers, we have 
	$$ \psi^-(x_0,0) = \psi^-(x_j,j) + A_{j,0}(x_j, x_0), \quad \psi^-(y_0,0) = \psi^-(y_j,j) + A_{j,0}(y_j, y_0).$$
	Furthermore, since $(x_k, v_k)_{k\ge j}$ is also a forward minimizer, we have
	$$ \psi^+(x_j,0) = \psi^+(x_0, 0) - A_{j,0}(x_j, x_0).$$
	It follows from the definition of $\psi^+$ that 
	$$ \psi^+(y_j, 0)\ge \psi^+(y_0, 0) - A_{j, 0}(y_j, y_0). $$
	Hence
	\begin{align*}
		& \psi(y_0, 0) -  \psi(x_0, 0) = \psi^-(y_0, 0) - \psi^+(y_0, 0) - \psi^-(x_0,0) + \psi^+(x_0,0) \\
		& = \psi^-(y_j, 0) - (\psi^+(y_0, 0) - A_{j,0}(y_j, y_0)) - \psi^-(x_j, 0) + (\psi^+(x_0,0) - A_{j,0}(x_j, x_0)) \\
		& \ge \psi^-(y_j, 0) - \psi^+(y_j, 0) -\psi^-(x_j,0) + \psi^+(x_j, 0).
	\end{align*}
\end{proof}

\begin{proposition}\label{strict-min}
There exists  $b(\omega)>0$ with
$$\int b(\omega)^{-\frac12}dP(\omega) < \infty $$
and a  constant $r(F)>0$ depending only on $(F_1,\cdots, F_M)$, such that 
$$ b(\omega) \|y-x_0\|^2 \le \psi(y, 0) - \psi(x_0, 0) \le (1+ C_0^\omega) \|y-x_0\|^2, \quad \text{ for }\|y-x_0\|\le r(F). $$
\end{proposition}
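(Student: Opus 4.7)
The proposition is essentially two separate statements---a lower and an upper bound---each of which follows in a few lines from tools already in place.

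For the lower bound, my plan is to pull Proposition~\ref{strict-nondeg} back through the identification furnished by Lemma~\ref{var-prob}. Writing $c(\omega) = (\xi_0^i(\omega))_{i=1}^M$, Lemma~\ref{var-prob} gives
\[
\Delta \psi(x,0) = \psi(x) - \sum_{i=1}^M c_i(\omega) F_i(x) = H(x, c(\omega)),
\]
with $\psi$ the semi-concave function constructed there. The hypotheses of Proposition~\ref{strict-nondeg} are met by the embedding assumption on $(F_1,\ldots,F_M)$ from Theorem~\ref{smooth-vis}, and Proposition~\ref{uniqueness}(3) together with Theorem~\ref{global-min} identifies $x_0$ with the unique minimizer $x(c(\omega))$ of $H(\cdot,c(\omega))$. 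Setting $b(\omega) := b(c(\omega))$ (with $b(c)$ and $r(F)$ from Proposition~\ref{strict-nondeg}), the desired quadratic lower bound on the ball of radius $r(F)$ follows at once. For the integrability, since $\omega \mapsto c(\omega)$ pushes $P$ forward to $\rho(c)\,dc$, a change of variables gives
\[
\int b(\omega)^{-\frac12} dP(\omega) = \int b(c)^{-\frac12} \rho(c)\,dc < B(F) < \infty.
\]

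For the upper bound, I would invoke the semi-concavity corollaries stated earlier: $\psi^-(\cdot,0)$ is $1$-semi-concave and $-\psi^+(\cdot,0)$ is $C_0^\omega$-semi-concave, so $\Delta\psi(\cdot,0)$ is $(1+C_0^\omega)$-semi-concave. Since $x_0$ is a global minimum of $\Delta\psi(\cdot,0)$, any supergradient $\ell$ at $x_0$ satisfies $-\ell(y-x_0) \le \tfrac{1+C_0^\omega}{2}\|y-x_0\|^2$; testing with $y = x_0 \pm tw$ and sending $t \to 0^+$ forces $\ell(w) = 0$ for every $w$. The semi-concavity inequality at $x_0$ with $\ell = 0$ then yields
\[
\Delta\psi(y,0) - \Delta\psi(x_0,0) \le \tfrac{1+C_0^\omega}{2} \|y-x_0\|^2 \le (1+C_0^\omega)\|y-x_0\|^2.
\]

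There is no substantial obstacle here---the argument is mostly assembly. The one point worth highlighting is that the neighborhood $r(F)$ on which the lower bound holds is \emph{independent of $\omega$}: this is precisely the strengthening Proposition~\ref{strict-nondeg} achieves over Propositions~\ref{nondegenerate} and \ref{integrability}, and it is what makes Proposition~\ref{strict-min} usable in the subsequent sections, where a $\omega$-dependent radius would have complicated the hyperbolic estimates driving the proof of Theorem~\ref{smooth}.
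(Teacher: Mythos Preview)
Your proof is correct and follows the paper's approach exactly: the lower bound is Proposition~\ref{strict-nondeg} transported through Lemma~\ref{var-prob}, and the upper bound is semi-concavity (the paper's proof literally says just that). One small point to tighten in your integrability step: the function $b(c)$ from Proposition~\ref{strict-nondeg} depends implicitly on the semi-concave part $\psi$, which itself depends on $\hat{\xi}=(\xi_j)_{j\ne 0}$, so your change-of-variables equality is not quite right as written---you need to condition on $\hat{\xi}$, use independence of $\xi_0$ and $\hat{\xi}$, and invoke the fact that $B(F)$ in Proposition~\ref{strict-nondeg} is uniform in $\psi$; this is a one-line Fubini argument (and is exactly what the paper does in Section~\ref{sec:uniform}), not a real obstacle.
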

\begin{proof}
	The lower bound follows from Proposition~\ref{strict-nondeg}, and the upper bound is a consequence of the semi-concavity (Lemma~\ref{properties}). 
\end{proof}
Applying Lemma~\ref{tempering} to $b(\omega)$, we obtain function $0<b_0(\omega)\le b(\omega)$ with $e^{-\epsilon}\le b_0(\theta\omega)/b_0(\omega)\le e^\epsilon$. Write $b_j(\omega)=b_0(\theta^j \omega)$. Before we prove Proposition~\ref{small-expansion}, we need the following lemma, which links the variationally defined objects and those coming from hyperbolicity.
\begin{lemma}\label{stable-est}
Assume $0<\gamma<1$, let $(s,u)\in D_{\rho_j}\cap C^s_\gamma$, and write $(y,w)=P_j(s,u)$,  where $P_j$ is from \eqref{eq:Pj}. Then 
$$ (1-\gamma)K_j^{-\frac12}\|s\| \le \|y-x_j\| \le 2a_j^{-\frac12}\|s\|.$$
\end{lemma}
\begin{proof}
	We have 
	$$
	\begin{bmatrix}
		y - x_j \\ w- v_j
	\end{bmatrix} =
	\begin{bmatrix}
		I & I \\
		\bbS_j & \bbU_j
	\end{bmatrix}
	\begin{bmatrix}
		(U_j-S_j)^{-\frac12} s \\ (U_j-S_j)^{-\frac12} u
	\end{bmatrix},
	$$
	hence $y-x_j=(U_j-S_j)^{-\frac12} (s + u)$. It follows that 
	$$ \|y-x_j\| \le \|(U_j-S_j)^{-\frac12}\| \|s+u\| \le a_j^{-\frac12}(1+\gamma)\|s\| \le 2a_j^{-\frac12}\|s\|,$$
	$$ \|y-x_j\| \ge m((U_j-S_j)^{-\frac12}) \|s+u\| \ge K_j^{-\frac12}(1-\gamma)\|s\|.$$
\end{proof}

\begin{proof}[Proof of Proposition~\ref{small-expansion}]
	Let $R_j = K_j^{-1} a_j^{\frac12} r(F) \le \|P_j\|^{-1} r(F)$. Let $(s_j, u_j)\in \tilde{W}_j$ be a backward orbit satisfying the conditions (a) and (b) in Proposition~\ref{small-expansion} and let $(y_j, w_j) = P_j(s_j, u_j)$, we have $\|u_j\| + \|s_j\| \le R_j$ implies $\|y_j -x_j\| \le r(F)$. By Lemma~\ref{dec-barrier} and Proposition~\ref{strict-min}, we have that for $j \le 0$ and $k\ge 0$, 
	\begin{multline*} \frac12 b_{j-k} \|y_{j-k} -x_{j-k}\|^2 \le \psi(y_{j-k}, j-k) -  \psi(x_{j-k}, j-k)  \\
	\le  \psi(y_j, j) - \psi(x_j, j)   \le K_j \|y_j -x_j\|^2. \end{multline*}
	By Lemma~\ref{stable-est}, we have 
	$$\|y_j-x_j\| \le 2a^{-\frac12}\|s_j\| , \quad \|s_{j-k}\| \le (1-\gamma)^{-1} K_j^{\frac12}\|y_j-x_j\|.$$
	Combine all three inequalities, we have
	\begin{multline*}
		\|s_{j-k}\|\le \frac12 (1-\gamma)^{-1} K_j^{\frac12} b_{j-k}^{-1} \|y_{j-k} - x_{j-k} \| \\
		\le \frac12 (1-\gamma)^{-1} K_j^{\frac32} b_{j-k}^{-1} \|y_j -x_j\| \le (1-\gamma)^{-1} K_j^{\frac32} b_{j-k}^{-1} a_j^{-\frac12} \|s_j\|.
	\end{multline*}
	Let $\tilde{C}_j = K_j^{\frac32} b_{j}^{-1} a_j^{-\frac12}$, we obtain 
	$\|s_{j-k}\| \le (1-\gamma)^{-1}\tilde{C}_j e^{\epsilon k} \|s_j\|$. 
	By re-choosing the functions $K_0$, $b_0$, $a_0$ again if necessary, we also have 
	$ e^{-\epsilon} \le \tilde{C}_{j+1}/\tilde{C}_j \le e^{\epsilon} $.	
\end{proof}

We now prove Theorem~\ref{smooth} using Proposition~\ref{small-expansion}. 

\begin{proof}[Proof of Theorem~\ref{smooth}]
	The proof proceeds in two steps. In the first step, we show that any backward orbit satisfying the conditions of Proposition~\ref{small-expansion} must be contained in the unstable cone, i.e. $\{(s,u)\in \R^{2d}; \|s\|\le \|u\|\}$. In the second step, we show that any backward orbit that are contained in the unstable cone for every iterate must be contained in the unstable manifold. 

	\emph{Step one}. 
	By Lemma~\ref{rhoj}, we can choose $\rho_0$ satisfying conditions of Proposition~\ref{slowvarying} and $\rho_j \le R_j/2$. Define 
	$$\bar{r}_0 = \tilde{C}_0^{-3} K_0^{-9} a_0^6 \kappa_0^2 \rho_0^2, $$
	we claim that any $(s_0, u_0)\in \tilde{W}_0 \cap D_{\bar{r}_0}$ must satisfy 
	$$ \|s_0 \| \le \|u_0\|. $$
	It suffices to show that $\|u_0\| \le \|s_0\|$ and $\|s_0\| \le \rho_0$ implies $\|s_0\| \ge \bar{r}_0$. In this case,  $(s_0, u_0)$ is contained in a $(s, 1, \rho_0, 0)-$admissible set.  Let 
	$$m=\min\{i\ge 0: (s_{-i}, u_{-i}) \notin D_{\rho_{-i}}\}-1.$$
	By Proposition~\ref{graph-trans}, for all $0 \le i \le m$, $(s_{-i},u_{-i})$ is contained in a $(s, \gamma_{-i}, \rho_{-i}, -i)-$admissible set, hence $ \|u_{-i} \| \le \gamma_{-i}\|s_{-i}\|$. By the definition of $m$ and $\|\tilde\Phi_j\|\le K_j^3 a_j^{-1}$, we have 
	$$ K_{-m}^{-3} a_{-m} \rho_{-m} \le \|s_{-m}\| \le \rho_{-m}. $$ 
	Applying \eqref{eq:growth-bnd}, we have 
	$$K_{-m}^{-3} a_{-m} \rho_{-m} \le \|s_{-m}\| \le (1-\gamma_{-m})^{-1}\tilde{C}_0 e^{\epsilon k}\|s_0\|. $$
	We have $(1-\gamma_{-m})^{-1} = (1- e^{-\lambda_{-m}/2})^{-1} \le \lambda_{-m}^{-1} \le a_{-m}^{-1}$.	It follows that 
	\begin{equation}
		\label{eq:sj-bnd} 
		K_0^{-3} a_0^2  \le \tilde{C}_0 e^{5m\epsilon }\|s_0\| \rho_{-m}^{-1}. 	
	\end{equation}

	Furthermore, by Proposition~\ref{graph-trans}, part (3), and $\|s_{-m}\| \le \rho_{-m}$, we have
	$$ \prod_{i=-m}^{-1}e^{\lambda_i/2}\|s_0\| \le \|s_{-m}\|  \le \rho_{-m} = \rho_0 \prod_{i=-m}^{-1}e^{-\lambda_i/4} . $$
	It follows that $\|s_0\|\rho_0^2 \le \rho_k^3$, hence
	\begin{equation}
		\label{eq:rhok}
		\|s_0\|^{\frac13} \le \rho_0^{-\frac23} \rho_{-m}.
	\end{equation}	
	On the other hand, by Lemma~\ref{slowvarying}, 
	$$ \kappa_0^{\frac12} e^{(\lambda-\epsilon)m/2} \|s_0\| \le \rho_{-m} \le 1. $$
	We may choose $\epsilon$ sufficiently small such that $\frac{10\epsilon}{\lambda-\epsilon}<\frac13$. We have
	$$ e^{5\epsilon m} \le \kappa_0^{-\frac{5\epsilon}{\lambda-\epsilon}} \|s_0\|^{-\frac{10\epsilon}{\lambda-\epsilon}} \le \kappa_0^{-\frac23} \|s_0\|^{-\frac13}. $$
	By \eqref{eq:sj-bnd} and \eqref{eq:rhok}, we have 
	$$ K_0^{-3} a_0^2  \le \tilde{C}_0 e^{5m\epsilon }\|s_0\| \rho_{-m}^{-1} = 
	\tilde{C}_0 \kappa_0^{-\frac23} \|s_0\|^{\frac13} \rho_0^{-\frac23} \|s_0\|^{\frac13},$$
	hence 
	$$ \|s_0\| \ge \tilde{C}_0^{-3} K_0^{-9} a_0^6 \kappa_0^2 \rho_0^2. $$ 
	This concludes the first step. 

	\emph{Step two.} For $j\le 0$, define $\bar{r}_j(\omega)=\bar{r}_0(\theta^j \omega)$. We may apply step one to $\theta^j \omega$ instead of $\omega$, and obtain that for any $(s_j, u_j) \in \tilde{W}_j\cap D_{\bar{r}_j}$, $\|s_j\| \le \|u_j\|$. Define 
	$$ r_0 = \bar{r}_{-1} K_{-1}^{-3} a_{-1} \kappa_0^{\frac12}$$
	and $r_j = r_0 \prod_{k=j}^{-1} e^{-\lambda_k/2}$ for all $j\le -1$. Similar to Lemma~\ref{rhoj}, by choosing a small $\epsilon$, we have 
	$$ r_j \le \bar{r}_{j-1} K_{j-1}^{-3} a_{j-1}$$ 
	for all $j\le 0$. 

	For any $(s_0, u_0)\in D_{r_0}\cap \tilde{W}_0$, using $\|D\tilde\Phi_{-1}\|\le K_{-1}^3 a_{-1}^{-1}$, we have $(s_{-1}, u_{-1})\in D_{\bar{r}_{-1}}\cap \tilde{W}_{-1}$, and by step one $\|s_{-1}\| \le \|u_{-1}\|$. It follows that $(s_{_1}, u_{-1})$ is contained in a $(u, 1, \bar{r}_{-1}, -1)-$admissible set, and by Proposition~\ref{graph-trans}, 
	$$  \|u_{-1}\| \le e^{-\lambda_{-1}/2} \|u_0\| \le e^{-\lambda_{-1}/2} r_0 \le \bar{r}_{-2}K_{-2}^{-3} a_{-2}.$$
	This procedure can be continued indefinitely. It follows that, for all $j\le 0$,  $(s_{j}, u_{j})$ is contained in a $(u, 1, \bar{r}_j, j)-$admissible set.  Take a family $W_{j}$ of admissible manifolds that contain $(s_{j}, u_{j})$. The fact that $(s_j, u_j)$ is a backward orbit implies $(s_j, u_j)\in (\cG^n W)_j$ for all $n\ge 0$. Let $n\to \infty$, we conclude that $(s_j, u_j)\in \lim_{n\to \infty}\cG(W)_j = W^u_j$, the unstable manifolds.  

	The first part of Theorem~\ref{smooth} follows from taking $U(\omega)=P_0 D_{r_0}$. To prove the second statement, note that the subdifferential  $\partial_x\psi^-(x,0)$ is upper semi-continuous as a set function. Since $\nabla \psi^-(x_0,0)$ exists, there exists a neighbourhood $V(\omega)$ of $X_0$ such that for all $y\in V(\omega)$, $\partial_y \psi^-(y,0)\in U(\omega)$. Using the first part of the theorem, we have $(y,\nabla\psi^-(y, 0)) \in W^u_0$ for almost every $y\in V(\omega)$. Since $W^u$ is $C^1$, we conclude that $\nabla\psi^-$ is also $C^1$ and $\psi^-$ is $C^2$. 
\end{proof} 

\begin{remark}
	It is not difficult to deduce from Main Theorem~\ref{smooth-vis} that the graph $\{(x, \nabla \psi^-(x,0))\}$ is contained in a smooth manifold. More precisely, there exists $j<0$ such that $\{(x, \nabla \psi^-(x,0))\}$ is contained in the forward image of a smooth piece of $\{(x, \nabla \psi^-(x,j))\}$. This leads to interesting questions on the topology and regularity of the shock manifolds, however, we will not discuss it in this paper. 
\end{remark}

\section{Generic nondegeneracy of the minimum}
\label{sec:gen-nondeg}

In this section we prove Proposition~\ref{nondegenerate}. Recall that we have 
$$H(x,c)=\psi(x)+ V(x,c),$$
where $\psi$ is a semi-concave function and $V(x,c)$ is $C^2$. We assume that for every $c$, the map $\partial_c V(\cdot, c)$ is an embedding from $\T^d$ to $\R^M$. By compactness, 
\[
	K= \sup_{x, c}\|\partial^2_{22} V(x, c)\| < \infty. 
\]
Recall that $m(A)=\|A^{-1}\|^{-1}$ is the conorm of an $n\times n$ matrix $A$.

Using the definition of embedding, we immediately have the following statement. 
\begin{lemma}\label{chart}
Assume that $\partial_cV(\cdot, c):\T^d\to \R^M$ is an embedding. Then there exists $U_1, \cdots, U_k\subset \T^d$ such that $\T^d=\bigcup_{j=1}^k U_j$, and for each $U_j$, there exists a projection $\Pi_j:\R^M\to \R^d$ given by $\Pi_j(c_1, \cdots, c_M)=(c_{i_1}, \cdots, c_{i_d})$ for some indices $\{i_1, \cdots, i_d\}\subset \{1, \cdots, M\}$, and a continuous positive function $\mu_j:\R^M\to \R$ such that 
$$m(\partial_x(\Pi_j\circ \partial_cV)(x,c))\ge \mu_j(c), \quad \forall x\in U_j.$$
\end{lemma}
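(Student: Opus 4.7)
My plan is to exploit the full-rank condition on the $M\times d$ matrix $\partial_x\partial_c V(x,c)$ implied by the embedding hypothesis, combined with compactness of $\T^d$, to produce the cover and projections, and then get the lower bound by a standard continuity/compactness argument. The embedding assumption forces $\partial_c V(\cdot,c)$ to be an immersion for every $c$, so at each $(x,c)$ the Jacobian $\partial_x\partial_c V(x,c)$ has rank $d$, and consequently at least one $d\times d$ sub-determinant, indexed by some size-$d$ subset $J\subset\{1,\dots,M\}$, is non-zero.

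First I would work pointwise. Fix $x_0\in\T^d$ and any reference parameter $c_0\in\R^M$. The rank-$d$ condition at $(x_0,c_0)$ produces an index set $J=J(x_0)\subset\{1,\dots,M\}$ with
$$\det\partial_x\bigl(\Pi_J\circ\partial_c V\bigr)(x_0,c_0)\neq 0.$$
Joint continuity in $(x,c)$ extends this non-vanishing to an open product neighbourhood of $(x_0,c_0)$ in $\T^d\times\R^M$, and in particular to a neighbourhood $U_{x_0}\subset\T^d$ of $x_0$.

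Next I would globalize. Since $\T^d$ is compact, the cover $\{U_{x_0}\}_{x_0\in\T^d}$ admits a finite subcover $U_1,\dots,U_k$, each equipped with its associated projection $\Pi_j$ (and in fact there are at most $\binom{M}{d}$ distinct projections, which bounds the combinatorics and lets me group). After shrinking each $U_j$ slightly so that $\overline{U_j}$ is compact and still lies inside the set where the chosen minor is non-vanishing, I would set
$$\mu_j(c):=\inf_{x\in\overline{U_j}}\,m\!\left(\partial_x(\Pi_j\circ\partial_c V)(x,c)\right).$$
Positivity of $\mu_j(c)$ for each fixed $c$ would follow from compactness of $\overline{U_j}$ together with non-vanishing of the relevant minor on $\overline{U_j}\times\{c\}$ (since $m(A)$ is a continuous positive function of invertible $d\times d$ matrices $A$), while continuity of $\mu_j$ in $c$ reduces to uniform continuity of $\partial_x\partial_c V$ on compact subsets of $\T^d\times\R^M$.

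The main obstacle is the $c$-uniformity in the second step: ensuring that the single projection $\Pi_j$, selected using a reference $c_0$, still yields a non-vanishing minor on $\overline{U_j}$ for every $c\in\R^M$ (without $J$ having to depend on $c$). In the setting of Proposition~\ref{integrability}, where $V(x,c)=-\sum_i c_i F_i(x)$, this obstacle disappears because $\partial_x\partial_c V(x,c)=-DF(x)$ is independent of $c$; the chosen minor is then a function of $x$ alone, the cover has the required uniform property automatically, and $\mu_j$ is in fact a positive constant. For the general form of the lemma one must use that $\partial_c V(\cdot,c)$ is an embedding for \emph{every} $c$, which is precisely the hypothesis supplying the uniform-in-$c$ non-vanishing needed to close the argument.
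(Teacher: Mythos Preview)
The paper states this lemma without proof, treating it as an elementary consequence of the embedding hypothesis and compactness. Your argument supplies exactly the standard details: rank $d$ of the Jacobian forces some $d\times d$ minor to be non-zero, continuity propagates this to a neighbourhood, and compactness of $\T^d$ yields the finite cover. The definition of $\mu_j(c)$ as an infimum over the compact closure $\overline{U_j}$ is the natural choice, and its continuity in $c$ follows from uniform continuity of $\partial_x\partial_c V$ on compacta.

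You are right to flag the $c$-uniformity obstacle, but your final sentence does not actually close it: knowing that $\partial_c V(\cdot,c)$ is an embedding for \emph{every} $c$ tells you that at every $(x,c)$ \emph{some} $d\times d$ minor is non-zero, yet does not by itself force the \emph{same} minor (the one you fixed via the reference $c_0$) to remain non-zero at $(x,c)$ for arbitrary $c$. In full generality the lemma as written may require more care or a slightly different formulation. That said, the paper only ever applies the lemma with $V(x,c)=-\sum_i c_iF_i(x)$, so that $\partial_x\partial_c V(x,c)=-DF(x)$ is $c$-independent and the obstacle evaporates exactly as you observe; the paper itself takes the $\mu_j$ (and $M_j$) to be $c$-independent constants in Sections~\ref{sec:alexandrov} and~\ref{sec:uniform}. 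For the purposes of the paper, then, your argument is complete.
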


Write $G(c)=\inf_{x\in \T^d}H(x,c)$. We have the following results. 

\begin{lemma}\label{subdifferential}
$G$ is a $K-$semi-concave function. For any $c\in \R^M$, if $x(c)\in X(c):=\argmin_x \{\psi(x)+V(x,c)\}$, then $\partial_cV(x(c),c)$ is a superdifferential of  $G$ at $c$. Conversely, if $l_c$ is a superdifferential of $G$ at $c$, then $l_c\in \conv_{x\in X(c)} \{\partial_cV(x,c)\}$. Here $\conv$ denote the convex hull of a set.   
\end{lemma}
\begin{proof}
	For any $c', c\in \R^M$, let $x(c')\in X(c')$ and $x(c)\in X(c)$, we have that 
	\begin{multline*} G(c')-G(c)=H(x(c'),c')-H(x(c),c)\le H(x(c),c')-H(x(c),c)\\
	= V(x(c),c')-V(x(c), c) \le \langle \partial_cV(x(c),c), c'-c\rangle + \frac12\|\partial^2_{cc}V\| |c-c'|^2.
\end{multline*}
It follows that $G(c)$ is semi-concave and  $\partial_cV(x(c))$ is a superdifferential of $G(c)$ at $c$. 

For the converse, we first show that $X(c)$ as a set function is upper semi-continuous in the sense that if  $c_n\to c$, $x_n\in X(c_n)$ and $x_n\to x$, then $x\in X(c)$. Indeed, 
$$ G(c)=\lim_{n\to \infty}G(c_n)=\lim_{n\to \infty} \psi(x_n)+V(x_n, c_n) =\psi(x) + V(x,c),$$
hence $x\in X(c)$. 

We now argue by contradiction. Assume that $l_c \notin \conv_{x\in X(c)}\{\partial_cV(x,c)\}$, then there exists a vector $v\in \R^M$ such that $\langle l_c, v\rangle >\langle \partial_cV(x,c), v\rangle$ for all $x\in X(c)$. Then we have
$$ G(c-tv) \le G(c) -\langle l_c, tv\rangle + \frac12 t^2 K |v|^2,$$
$$ G(c) \le G(c-tv) + \langle \partial_cV(x_t,c-tv), tv\rangle + \frac12 t^2 K|v|^2,$$
where $x_t\in X(c-tv)$. Since the domain for $x$ is compact, there exists $t_n\to 0+$ such that $x_n:=x_{t_n}\to x\in X(c)$. Combine the two formulas, we have
$$ G(c-t_nv)\le G(c-t_nv) - \langle l_c, t_nv\rangle + \langle \partial_cV(x_n, c-t_nv),t_nv\rangle + t_n^2 K|v|^2.  $$
Divide both sides by $t_n$ and take $t_n\to 0+$, we obtain   a contradiction:
$$\langle l_c, v\rangle \le  \langle \partial_cV(x,c), v\rangle. $$

\end{proof}
\begin{corollary}\label{dG}(See Lemma~\ref{generic})
For almost every $c\in \R^M$, $G(c)$ is differentiable, there is a unique $x(c)\in \argmin_x H(x,c)$, and $\partial_cV(x(c),c)= d G(c)$. 
\end{corollary}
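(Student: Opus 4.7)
The plan is to combine three ingredients already in hand: the semi-concavity of $G$ from Lemma~\ref{subderivative}, Rademacher's theorem, and the injectivity implicit in the hypothesis that $\partial_c V(\cdot, c) : \T^d \to \R^M$ is an embedding. These three facts should knit together to yield differentiability a.e., uniqueness of the minimizer, and the identification $dG(c) = \partial_c V(x(c), c)$ simultaneously.

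First I would invoke Lemma~\ref{subderivative} to assert that $G$ is semi-concave on $\R^M$, and then Lemma~\ref{Lipshitz} to upgrade semi-concavity to being locally Lipshitz. Rademacher's theorem then gives the existence of $dG(c)$ for Lebesgue-a.e.\ $c \in \R^M$. At any such point of differentiability, semi-concavity forces the subderivative to be unique: if $l_c$ is any subderivative of $G$ at $c$, then the defining inequality
$$ G(c + tv) - G(c) \le \langle l_c, tv \rangle + \frac{C}{2} t^2 |v|^2 $$
combined with dividing by $t$ and sending $t \to 0^+$ yields $\langle dG(c), v \rangle \le \langle l_c, v \rangle$ for every $v \in \R^M$; applying this with $\pm v$ pins down $l_c = dG(c)$.

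The concluding step is then immediate from the converse half of Lemma~\ref{subderivative}: every $x(c) \in X(c)$ furnishes a subderivative $\partial_c V(x(c), c)$ of $G$ at $c$, so by the uniqueness just established all these vectors must coincide with $dG(c)$. The embedding hypothesis gives injectivity of $\partial_c V(\cdot, c)$, which then collapses $X(c)$ to a single point $x(c)$, and the identity $dG(c) = \partial_c V(x(c), c)$ drops out. I do not anticipate any real obstacle here, since all of the variational content has been absorbed into Lemma~\ref{subderivative}; the only item worth spelling out is the standard reduction of subderivatives to the gradient at points of differentiability of a semi-concave function.
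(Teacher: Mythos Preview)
Your proposal is correct and follows the same approach as the paper's brief proof: a.e.\ differentiability of the semi-concave $G$ gives a unique subderivative, and the forward direction of Lemma~\ref{subderivative} together with injectivity of $\partial_c V(\cdot,c)$ then forces $X(c)$ to be a singleton. One minor terminological slip: you cite the ``converse half'' of Lemma~\ref{subderivative}, but it is the forward direction---each minimizer yields a subderivative---that you actually use; the convex-hull converse is not needed here.
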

\begin{proof}
	Since a semi-concave function is almost everywhere differentiable, for almost every $c$, the superdifferential of $G(c)$ is unique. Since $\partial_cV(\cdot,c)$ is one-to-one,  there is at most one $x(c)$ such that $\partial_cV(x(c))=dG(c)$.
\end{proof}

\begin{lemma}
	Let $\varphi:\T^d\to \R$ be a semi-concave function and $x_0\in \argmin \varphi(x)$, then $\varphi$ is differentiable at $x_0$ and $d\varphi(x_0)=0$. 
\end{lemma}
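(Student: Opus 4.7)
The plan is to use the defining inequality of semi-concavity together with the minimality of $x_0$ to squeeze every subderivative to be zero, and then invoke the standard fact that a semi-concave function is differentiable at a point exactly when its subderivative there is unique.

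First I would pick any subderivative $l_{x_0}:\R^d\to\R$ of $\varphi$ at $x_0$; such an $l_{x_0}$ exists because $\varphi$ is semi-concave on $\T^d$. By the definition of semi-concavity we have, for $y$ near $x_0$,
\begin{equation*}
\varphi(y)-\varphi(x_0)\le l_{x_0}(y-x_0)+\frac{C}{2}d(x_0,y)^2.
\end{equation*}
On the other hand, since $x_0\in\argmin\varphi$, the left-hand side is nonnegative. Combining these two inequalities gives
\begin{equation*}
0\le l_{x_0}(y-x_0)+\frac{C}{2}d(x_0,y)^2.
\end{equation*}

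Next I would substitute $y=x_0+tv$ for an arbitrary direction $v\in\R^d$ and small $t\in\R$, yielding $0\le t\,l_{x_0}(v)+\tfrac{C}{2}t^2|v|^2$. Dividing by $t>0$ and letting $t\to 0^+$ forces $l_{x_0}(v)\ge 0$; dividing by $|t|$ for $t<0$ and letting $t\to 0^-$ forces $l_{x_0}(v)\le 0$. Since $v$ was arbitrary, every subderivative of $\varphi$ at $x_0$ must vanish identically.

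Finally, I would invoke the standard characterization (for instance via Lemma~\ref{Lipshitz} and the concave-side version of Rademacher's theorem used implicitly in Section~\ref{sec:visc-solut-minim}) that a continuous semi-concave function on $\T^d$ is differentiable at a point if and only if its set of subderivatives there is a singleton. Since we have just shown that the only possible subderivative at $x_0$ is the zero linear form, $\varphi$ is differentiable at $x_0$ with $d\varphi(x_0)=0$. There is no genuine obstacle here; the only point that requires a line of care is that on the torus one has to interpret $y-x_0$ via a lift in a small neighbourhood of $x_0$, but this is exactly the convention already adopted in the definition of semi-concavity earlier in the paper.
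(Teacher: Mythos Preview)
Your proof is correct and follows essentially the same approach as the paper: combine the minimality inequality $\varphi(y)-\varphi(x_0)\ge 0$ with the semi-concavity inequality, substitute $y=x_0+tv$, and let $t\to 0^\pm$ to force every subderivative at $x_0$ to vanish. The only cosmetic difference is that the paper concludes differentiability directly from the two-sided estimate $0\le \varphi(y)-\varphi(x_0)\le C|y-x_0|^2$ (which is immediate once $l_{x_0}=0$), whereas you invoke the general singleton-subderivative criterion; both are fine.
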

\begin{proof}
	Let $l_0$ be a subdifferential of $\varphi$ at $x_0$, then for any $x\in \T^n$, 
	$$ 0\le \varphi(x)-\varphi(x_0) \le l_0(x-x_0) + C|x-x_0|^2.$$
	Take $x-x_0=tv$ for $t>0$, $v\in \R^n$, we have $0\le tl_0(v) + Ct^2|v|^2$. Divide by $t$ and let $t\to 0+$, we have that $l_0(v)\ge 0$ for any $v\in \R^n$, and hence $l=0$.  It follows that $\varphi$  has only $0$ as a subdifferential at $x_0$. As a consequence, $\varphi$ is differentiable at $x_0$ and the derivative is $0$. 
\end{proof}

The following corollary is an immediate consequence of the last lemma. 
\begin{corollary}\label{dxV}
Any $x(c)\in \argmin_xH(x,c)$ satisfies $\partial_x H(x(c),c)=0$. 
\end{corollary}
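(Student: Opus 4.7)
The plan is to apply the preceding unnamed lemma (the one about the derivative of a semi-concave function at a minimum) directly to the function $\varphi(x) := H(x,c)$ with $c$ held fixed.

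First I would verify that $\varphi$ is semi-concave on $\T^d$. By hypothesis $\psi$ is semi-concave, while $V(\cdot,c)$ is $C^2$ with $\|\partial_{xx}^2 V\|$ bounded (this can be read off from the standing assumption that $V$ is $C^2$ on the compact set $\T^d \times \{c\}$, or, in the set-up relevant to this paper, from the explicit form $V(x,c) = -\sum_i c_i F_i(x)$). A bounded $C^2$ function is automatically $C'$-semi-concave for some constant $C'$ depending on its $C^2$ norm, and the sum of two semi-concave functions is semi-concave. So $\varphi$ is semi-concave.

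Next, by assumption $x(c)\in\argmin_x \varphi(x)$. The preceding lemma then immediately gives that $\varphi$ is differentiable at $x(c)$ with $d\varphi(x(c)) = 0$. Translating back to the original notation, this is precisely the statement $\partial_x H(x(c),c) = 0$, which is the desired corollary.

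There is essentially no obstacle here; the content of the corollary is already packaged in the previous lemma once one observes that semi-concavity in $x$ is preserved when one adds the $C^2$ function $V(\cdot,c)$ to the semi-concave function $\psi$. The only slight care needed is to note that the constant of semi-concavity for $\varphi$ may depend on $c$, but this is irrelevant for the conclusion, which is purely local at $x(c)$ and only uses the existence of some semi-concavity constant.
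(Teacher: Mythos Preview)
Your proposal is correct and matches the paper's own approach: the paper simply states that the corollary is an immediate consequence of the preceding lemma, and your argument spells out exactly that application, namely that $H(\cdot,c)=\psi+V(\cdot,c)$ is semi-concave as the sum of a semi-concave and a $C^2$ function, so the lemma gives $\partial_x H(x(c),c)=0$.
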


Given any function $g:\T^n\to \R$ differentiable at $x$, we define the second order difference to be
$$\nabla^2g(x)(\Delta x)=g(x+\Delta x)-g(x) - \langle dg(x), \Delta x\rangle. $$
We have
$$ d^2g(x)(v)=\liminf_{\tau\to 0+}\frac{2}{\tau^2}\nabla^2g(x)(\tau v). $$

Given any $x\in \T^d$ and $c\in \R^M$, we call $(x,c)$ a conjugate pair if $x$ is the unique point of minimum of $H(\cdot, c)$. This implies $\partial_c V(x,c)= DG(c)$, $\partial_x H(x,c)=0$ and  $G(c)=H(x,c)$ (in particular, all derivatives must exist). 
There is a duality between the second subderivative of $V$ in $x$ and the second subderivative of $G$. 
\begin{lemma}[Duality]\label{duality}
For any $\Delta x\in \R^d$ and $\Delta c\in \R^M$,
\begin{equation}
	\label{eq:diff}
	\nabla_x^2H(x,c)(\Delta x) \ge \nabla^2 G(c)(\Delta c) -\langle \partial_cV(x+\Delta x, c) - \partial_c V(x,c), \Delta c\rangle -K(\Delta c)^2/2,
\end{equation}
where $K = \sup_{x, c}\|\partial^2_{22}V(x, c)\|$. 
As a result, for $v \in \R^d$ and $w \in \R^M$,
$$ \frac12 d^2_x H(x,c)(v)\ge \frac12 d^2G(c)(w) - \langle \partial^2_{cx}V(x,c)v,w\rangle - \frac{K}{2}|v|^2. $$
\end{lemma}
\begin{proof}
	Since $(x,c)$ is a conjugate pair, $G(c)=H(x,c)$, $\partial_xH(x,c)=0$ and $\partial_cV(x,c)=DG(c)$. 
	\[
		\begin{aligned}
			& \nabla^2_xH(x,c)(\Delta x)  = H(x+\Delta x, c)- H(x,c) - \langle \partial_x H(x,c), \Delta x \rangle   = H(x+\Delta x, c)- H(x,c) \\
			&= H(x+\Delta x, c+ \Delta c)- H(x,c) -((H(x+\Delta x,c+\Delta c)-H(x+\Delta x),c)) \\
			& \ge G(c+\Delta c)-G(c) - \langle \partial_c V(x+\Delta x,c), \Delta c\rangle -K(\Delta c)^2/2 \\
			& = G(c+\Delta c)-G(c) -\langle \partial_cV(x,c),\Delta c\rangle -\langle \partial_c V(x+\Delta x,c)-\partial_cV(x,c), \Delta c\rangle - K(\Delta c)^2/2\\
			& = \nabla^2 G(c)(\Delta c) - \langle \partial_c V(x+\Delta x,c)-\partial_cV(x,c), \Delta c\rangle -K(\Delta c)^2/2.
		\end{aligned}
	\]
	The second formula follows from taking $\Delta x= \tau v$, $\Delta c=\tau w$,  divide by $\tau^2$, and take limit as $\tau\to 0+$.
\end{proof}

To finally prove Proposition~\ref{nondegenerate}, we need the following result from convex analysis. 
\begin{theorem}[Alexandrov Theorem]\label{alexandrov}\cite{Ale39}
Let $f: \R^M \to \R$ be a convex function, then for almost every $x \in \R^M$, $f$ is differentiable at $x$, and there exists an $M \times M$ symmetric matrix $A$ such that 
\[
	\phi(x + v) = f(x) + \langle \nabla f(x) , v\rangle + \frac12 \langle A v, v\rangle + o(|v|^2)
\]
as $v \to 0$. 
\end{theorem}
For any $c_0$, a $K$-semi-concave function $G(c)$ can always be made concave by subtracting the quadratic function $K|c-c_0|^2$. It follows that a semi-concave function is also twice differentiable almost everywhere.

\begin{proof}[Proof of Proposition~\ref{nondegenerate}]
	According to Corollaries~\ref{dG} and \ref{dxV}, for almost every $c\in \R^m$, $G$ is differentiable at $c$, there exists unique $x(c)\in \argmin_x H(x,c)$, $\partial_cV(x,c)=dG(c)$ and $\partial_x H(x(c),c)=0$. In other words, $(x(c),c)$ is a conjugate pair. Furthermore, by Theorem~\ref{alexandrov}, $d^2G(c)$ exists and is a symmetric bilinear form. Assume that $d^2G(c)(w)=\langle A(c) w, w\rangle$, we have that 
	$$ \sup_{\|w\|=1}|d^2G(c)(w)|= \|A(c)\|.$$

	There exists $1\le j\le k$ such that $x(c)\in U_j$. By Lemma~\ref{duality}, we have that for any $v\in \R^d$, $w\in \R^M$,
	$$ d^2_xH(x(c),c)(v) \ge  d^2G(c)(w) - 2\langle \partial^2_{cx} V(x(c),c)v, w \rangle - K|v|^2. $$
	By Lemma~\ref{chart}, there is a coordinate projection $\Pi_j:\R^M\to \R^d$ given by $(c_1, \cdots, c_M)\mapsto (c_{i_1}, \cdots, c_{i_d})$ for some indices $\{i_1, \cdots, i_d\}\subset \{1, \cdots, M\}$. Let  $\Pi_j':\R^M \to \R^{M-d}$ be the map to the complementary indices. We have that for any two vectors $w_1, w_2\in \R^M$,
	$$ \langle w_1, w_2\rangle = \langle \Pi_jw_1,\Pi_jw_2\rangle + \langle \Pi_j'w_1, \Pi_j' w_2\rangle.$$

	Choose $w$ such that $\Pi_jw=- t \,  \Pi_j \partial^2_{xc}V(x(c),c)v$ and $\Pi_j'w=0$, where  $ t>0$ is a parameter. We have
	$$ -\langle \partial^2_{xc}V(x(c),c)v, w \rangle =  t \, \langle \Pi_j\partial^2_{xc}V(x(c),c)v, \Pi_j\partial^2_{xc}V(x(c),c)v\rangle =  t \, |\Pi_j \partial^2_{xc}V(x(c),c)v|^2.$$
	Let $\nu(c)=\sup_{x\in \T^d}\|\partial^2_{xc}V(x,c)\|$, then $\mu_j(c)|v|\le |\Pi_j \partial^2_{cs}V(x)v|\le \nu(c)|v|$. 
	It follows that 
	$$ d^2_xH(x(c),c)(v) \ge - (\|A(c)\|+ K )M(c)^2 |v|^2   t^2 + 2 \mu_j(c)^2 |v|^2  t. $$
	Choosing
	$$ t =  t_j(c): = \frac{\mu_j(c)^2}{ (\|A(c)\|+K)\nu^2(c)},$$
	we have that $$d^2_x H(x(c),c)(v)\ge \mu_j^2(c) t_j(c)|v|^2.$$
	We now choose 
	\begin{equation}
		\label{quan-bound} 
		a(c)  = \frac{(\inf_{1\le j\le k}\mu_j(c))^4}{\nu^2(c)}(\|A(c)\|+K)^{-1}
	\end{equation}
	and the proposition follows.
\end{proof}

\section{A quantitative Alexandrov theorem}
\label{sec:alexandrov}

In this section we prove Proposition~\ref{integrability}.  

For $V(x,c)=-\sum_{i=1}^M c_i F_i(x)$, $\partial_cV=(F_1, \cdots, F_M)$ is independent of $c$. It follows that we can choose $\mu_j(c)=\mu_j$ and $\nu(c)=\nu$ independent of $c$. Furthermore, the constant $K=0$ in  (\ref{quan-bound}). We have
$a(c)= \alpha \|A(c)\|^{-1}$, where $\alpha = \frac{(\inf_{1\le j\le k}\mu_j)^4}{\nu^2}$. 

By Lemma~\ref{subdifferential}, any subdifferential of $G(c)$ is contained in the set $\conv_{x\in X(c)}\{\partial_cV(x,c)\}$, a subset of  $B:=\conv_{x\in \T^d}\{(F_1, \cdots, F_M)(x)\}$. Since $\bigcup_{x\in \T^d}(F_1, \cdots, F_M)(x)$ is a compact set, so is $B$. 

To prove Proposition~\ref{integrability}, it suffices to show that, for a density $\rho$ satisfying assumption 5, there exists $A_1(F)$ such that 
$$ \int \|A(c)\| \rho(c) dc < A_1(F).$$
The above formula follows from the following ``quantitative Alexandrov theorem''. 
\begin{theorem}\label{quant-alex}
Assume $f:\R^M\to \R$ is a convex function such that there exists a bounded set $B$ satisfying the following condition: for any $c\in \R^n$, we have $\partial f(c) \subset B$. Here $\partial f(c)$ denote the set of all subdifferential of $f$ at $c$.

Let $\rho: \R^M\to \R^+$ be a probability density satisfying assumption 5, and let $A(c)$ denote the hessian matrix of $f$ at $c$, which exists almost everywhere by Theorem~\ref{alexandrov}. We have
$$ \int \|A(c)\| \rho(c) dc < A_1(B). $$
\end{theorem}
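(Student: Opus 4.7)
The plan is to reduce the statement to estimating each diagonal entry of the Hessian and then slice one coordinate at a time. Since $f$ is convex, wherever $A(c)$ exists it is positive semi-definite, so
$$\|A(c)\| \le \mathrm{tr}(A(c)) = \sum_{i=1}^M A_{ii}(c),$$
and it suffices to bound $\int A_{ii}(c)\,\rho(c)\,dc$ separately for each $i=1,\ldots,M$.

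For the single-coordinate bound I would fix $i$ and, for each $\hat c_i\in\R^{M-1}$, consider the one-dimensional slice $g_{\hat c_i}(t):=f(c_1,\ldots,c_{i-1},t,c_{i+1},\ldots,c_M)$. This is a convex function of $t$. Since the global subdifferential is contained in the bounded set $B$, the a.e.\ derivative $g_{\hat c_i}'$ is non-decreasing and takes values in the projection $\pi_i(B)$, a bounded interval of length at most some $R_i$ determined by $B$ (hence by $F$). Therefore the non-negative Radon measure $d g_{\hat c_i}'$ has total mass at most $R_i$, and its absolutely continuous density, which coincides almost everywhere with $A_{ii}(t,\hat c_i)$ (see the obstacle below), satisfies
$$\int_{\R} A_{ii}(t,\hat c_i)\,dt \;\le\; R_i.$$

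The last step combines this with Assumption 5. Using the majorization $\rho(c)\le\rho_i(c_i)\hat\rho_i(\hat c_i)$ and Fubini,
$$\int A_{ii}(c)\,\rho(c)\,dc \;\le\; \|\rho_i\|_{L^\infty}\int \hat\rho_i(\hat c_i)\left(\int_{\R} A_{ii}(t,\hat c_i)\,dt\right)d\hat c_i \;\le\; R_i\,\|\rho_i\|_{L^\infty}\,\|\hat\rho_i\|_{L^1}.$$
Summing over $i$ and setting $A(F):=\sum_{i=1}^M R_i\,\|\rho_i\|_{L^\infty}\,\|\hat\rho_i\|_{L^1}$ yields the theorem.

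The main technical subtlety will be identifying the $(i,i)$-entry of the multivariate Alexandrov Hessian $A_{ii}(c)$ with the pointwise second derivative $g_{\hat c_i}''(c_i)$ of the 1D slice. Both are defined almost everywhere: $A$ by the multivariate Alexandrov theorem (Theorem~\ref{alexandrov}), and $g_{\hat c_i}''$ for a.e.\ $\hat c_i$ and a.e.\ $c_i$ by Fubini applied to the 1D Alexandrov theorem. That the two objects agree a.e.\ follows from setting $h=te_i$ in the pointwise quadratic expansion defining $A(c)$ and comparing with the 1D Taylor expansion of $g_{\hat c_i}$ at a common point of twice differentiability. Once this identification is in place, the rest of the argument is a direct application of the one-variable fact that a monotone function with bounded range has bounded total variation.
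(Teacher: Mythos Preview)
Your proof is correct and follows the same overall architecture as the paper's: bound $\|A(c)\|$ by a sum of diagonal entries $A_{ii}(c)$, integrate each $A_{ii}$ along the $c_i$-direction using that the one-dimensional slice $g_{\hat c_i}$ is convex with derivative of bounded oscillation, and then apply Assumption~5 via Fubini. Your handling of the identification $A_{ii}(c)=g_{\hat c_i}''(c_i)$ a.e.\ is the correct way to close the gap.

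The one genuine difference is the linear-algebra step. You use the clean inequality $\|A(c)\|\le \mathrm{tr}\,A(c)=\sum_i A_{ii}(c)$, valid because convexity of $f$ forces the Alexandrov Hessian to be positive semi-definite. The paper instead constructs, for each $c$, an index $i$ with $\langle A(c)e_i,e_i\rangle \ge \beta_M^2\|A(c)\|$ (by looking at the top eigenvector of $A^{1/2}$ and projecting onto its largest coordinate), then defines auxiliary functions $\varphi_i$ and bounds $\|A\|\le\sum_i\varphi_i\le \beta_M^{-2}\sum_i A_{ii}$. Your route is shorter, avoids the dimensional constant $\beta_M$, and yields the slightly better constant $A(F)=\sum_i R_i\|\rho_i\|_{L^\infty}\|\hat\rho_i\|_{L^1}$ with $R_i\le\diam B$; the paper's detour buys nothing extra here.
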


We first prove a lemma for one-dimensional functions. 
\begin{lemma}\label{1-d-alex}
Assume that $g:\R\to \R$ is a convex function, $\partial g(t)\subset (a,b)$, then 
$$ \int g''(t) dt  \le b-a. $$
\end{lemma}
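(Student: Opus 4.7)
The plan is to exploit the monotonicity of $g'$ that comes from convexity, together with the standard fact from real analysis that the integral of the (a.e.) derivative of a monotone function is bounded by its total variation.

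First I would record the basic structural facts. Since $g$ is convex on $\R$, the one-sided derivatives $g'_-(t)$ and $g'_+(t)$ exist everywhere, they are non-decreasing in $t$, and they coincide (and equal $g'(t)$) outside a countable set. Moreover $g'_-(t)$ and $g'_+(t)$ are precisely the endpoints of the subdifferential $\partial g(t)$. The hypothesis $\partial g(t)\subset (a,b)$ therefore gives
\begin{equation*}
    a \le g'_-(t) \le g'_+(t) \le b \qquad \text{for all } t\in\R.
\end{equation*}
In particular, $g'$ is a bounded monotone function wherever defined, and since $g$ is convex its second (distributional) derivative is a non-negative locally finite Borel measure $\mu$; the classical pointwise $g''(t)$ (which exists a.e.\ by Lebesgue's theorem on differentiation of monotone functions) is the density of the absolutely continuous part of $\mu$. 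In particular $g''(t) \ge 0$ a.e., so $|g''(t)| = g''(t)$ a.e.

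Next I would use the Lebesgue decomposition for monotone functions applied to $g'_+$ on an arbitrary finite interval $[T_1,T_2]$. Since $g'_+$ is non-decreasing and bounded,
\begin{equation*}
    \int_{T_1}^{T_2} g''(t)\,dt \;\le\; g'_+(T_2) - g'_+(T_1),
\end{equation*}
the inequality accounting for any singular part of $\mu$ (atoms at non-smooth points of $g$, or a singular continuous part). Combined with the bound $a \le g'_+ \le b$ above, this yields
\begin{equation*}
    \int_{T_1}^{T_2} |g''(t)|\,dt \;\le\; b-a.
\end{equation*}

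Finally, letting $T_1\to-\infty$ and $T_2\to\infty$ and invoking monotone convergence gives $\int_\R |g''(t)|\,dt \le b-a$, as desired. There is no real obstacle here; the only subtlety worth writing down carefully is the bookkeeping that the strict containment $\partial g(t)\subset(a,b)$ still allows $g'_\pm$ to take values up to $b$ and down to $a$ only in the limit, but the resulting weak inequality is exactly what the lemma asserts.
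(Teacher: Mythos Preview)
Your argument is correct and follows essentially the same route as the paper: convexity makes $g'$ monotone with values in $(a,b)$, so $g''\ge 0$ a.e.\ and the integral of the a.e.\ derivative of a monotone function is bounded by its total increment, yielding $\int |g''|\le b-a$. Your write-up is simply a more careful version of the paper's two-line proof, with the one-sided derivatives and the singular part of the distributional second derivative spelled out explicitly.
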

\begin{proof}
	For a one dimensional convex function, $g'(t)$ exists almost everywhere and is increasing. Without loss of generality, we may assume that $g'(t)$ is increasing. It follows from well known properties of monotone functions that for any $N>0$, 
	$$ \int_{-N}^N g''(t) dx \le  \lim_{N\to \infty}g'(N)-g'(-N)\le b-a. $$
\end{proof}

\begin{proof}[Proof of Theorem~\ref{quant-alex}]

	Given a positive semi-definite symmetric matrix $A$, let $v$ be a unit eigenvector of its largest eigenvalue. We have  $\|A^{\frac12}v\|=\|A^{\frac12}\|=\|A\|^{\frac12}$. Write $v=a_1e_1 + \cdots + a_Me_M$, we say that $A$ is $i$-positive if $|a_i| = \max_{1\le k \le M}|a_k|$. There may exist multiple  $i$'s such that $A$ is $k$-positive. 

	It follows from the maximality of $|a_i|$ that $|a_i|\ge \frac{1}{\sqrt{M}}$. Denote $w=a_ie_i$, we have  $|v-w|^2=1-a_i^2\,$,  and 
	$$ |A^{\frac12}w| \ge |A^{\frac12}v|- |A^{\frac12}(v-w)| \ge \|A\|^{\frac12}\left(1-\sqrt{1-a_i^2}\right). $$
	It follows from Lemma~\ref{1-d-alex} that
	$$ |A^{\frac12}e_i| \ge \frac{\left(1-\sqrt{1-a_i^2}\right)}{a_i^2} \|A\|^{\frac12} \ge \beta_M \|A\|^{\frac12},$$
	where $\beta_M>0$ is a constant depending only on $M$. 

	For each $1\le i \le n$, define a function $\varphi_i: \R^n\to \R$ by 
	$$ \varphi_i(c)=
	\begin{cases} 
	\|D^2f(c)\|, & D^2f(c) \text{ exists and is i-positive;} \\
	0, & \text{otherwise.}
\end{cases} $$
We have $\|D^2f(c)\| \le \sum_{i=1}^M \varphi_i(c)$ for almost every $c$ (since $D^2f(c)$ exists for almost every $c$ by Alexandrov's theorem).  

Since $f(c_1, \cdots, c_M)$ considered as a function of $c_i$ (with $\hat{c}_i$ as parameters) is convex, and $\partial^2_{c_i^2}f(x)=\langle A(c) e_i, e_i\rangle$, by Lemma~\ref{1-d-alex}, 
$$ \int |A(c)^{\frac12}e_i|^2dc =  \int \langle A(c) e_i, e_i\rangle dc\le \diam(B). $$
We have 
$$ \int \varphi_i(c)dc_i \le \int \|A(c)\| dc_i \le \frac{1}{\beta_M^2}\int \langle A(c) e_i, e_i\rangle dc\le \diam(B)/\beta_M^2.$$
Apply Fubini theorem, we have 
$$ \int \varphi_i(c)\rho(c)dx \le \int \hat{\rho}(\hat{c}_i)d\hat{c}_i \int \varphi_i(c)\rho_i(c_i) dc_i    \le \|\rho_i\|_{L^\infty} \diam(B)/\beta_M^2 \int \hat\rho_i(\hat{c}_i)d\hat{c}_i.$$
Define $A(F)=\sum_{1\le i \le M}\|\rho_i\|_{L^\infty}\|\hat{\rho}_i\|_{L^1} \diam(B)/\beta_n^2$ and the theorem follows.
\end{proof}

\section{Uniform nondegeneracy of the minimum}
\label{sec:uniform}

We prove Proposition~\ref{strict-nondeg} in this section. Instead of looking at the limiting behaviour, we attempt to get quantitative estimates of the second order difference $\nabla_xH(x,c)$ instead of the subderivative. 

For $V(x,c)=\sum_{i=1}^Mc_iF_i(x)$, we have  $K = \sup \|\partial^2_{22}V\| =0$ and $\partial_cV=DF$, where $F=(F_1, \cdots, F_M)$. We obtain the simplified \eqref{eq:diff}:
\begin{equation}
	\label{eq:nabla} 
	\nabla_x^2 H(x,c)(\Delta x) \ge \nabla^2 G(c)(\Delta c) - \langle DF(x)\Delta x, \Delta c\rangle + C(F) \|\Delta x\|^2 \|\Delta c\|,
\end{equation}
where  $C(F)=\|F\|_{C^2}$. 

We have the following counterpart of Theorem~\ref{quant-alex}:
\begin{lemma}\label{max-est}
Assume that $f:\R^M\to \R$ is a convex function such that all subdifferentials are contained in a bounded set $B$. Let $\rho$ be a density satisfying Assumption 5. Then for $i=1, \cdots, M$, there exist positive measurable functions $g_i:\R^M\to \R^+$ and a constant $C$ depending only on $\rho$ and $B$ satisfying 
$$\int \sqrt{g_i(c)}d\rho(c) < C,$$
and for every $c$ where $Df(c)$ exists, we have
$$ \nabla^2 f(c)(te_i) \le   g_i(c) t^2.$$
\end{lemma}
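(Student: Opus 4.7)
The plan is to bound the second-order difference of $f$ along $e_i$ by a one-dimensional uncentered maximal function of the distributional second derivative, and then to integrate this bound in two Fubini steps that combine the weak $L^1$ inequality for the maximal function with an $L^1$ bound borrowed from the marginal of $\rho$.

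First, for fixed $\hat c_i\in\R^{M-1}$, I would view $F_{\hat c}(c_i):=f(c_i,\hat c_i)$ as a convex function of one real variable. Since every subderivative of $f$ lies in the bounded set $B$, the derivative $F'_{\hat c}$ takes values in an interval of length at most $D:=\diam(B)$, so the distributional second derivative $\mu_{\hat c}:=F''_{\hat c}$ is a positive Radon measure on $\R$ with total mass at most $D$. A direct Fubini calculation yields the representation
$$\nabla^2 f(c)(te_i)=\int_0^{|t|}(|t|-s)\,d\mu_{\hat c}\bigl(c_i+\mathrm{sgn}(t)\,s\bigr),$$
from which one immediately reads $\nabla^2 f(c)(te_i)/t^2\le\mu_{\hat c}(I_t)/|t|$, where $I_t$ is the interval of length $|t|$ joining $c_i$ and $c_i+t$. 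Setting
$$g_i(c):=M^+\mu_{\hat c}(c_i)+M^-\mu_{\hat c}(c_i),\qquad M^\pm\mu(x):=\sup_{r>0}\mu([x,x\pm r])/r,$$
therefore produces $\nabla^2 f(c)(te_i)\le g_i(c)\,t^2$ for every $t\in\R$.

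The next step is to establish $\int\sqrt{g_i}\,d\rho<C$ in two stages. By Riesz's Sunrise Lemma applied to the distribution function of $\mu_{\hat c}$, the one-sided uncentered maximal operators are of weak type $(1,1)$, so $|\{c_i:g_i(c_i,\hat c_i)>\lambda\}|\le 2D/\lambda$. Fixing $\hat c_i$ and writing $R(c_i):=\rho(c_i,\hat c_i)$, $\pi(\hat c_i):=\int R\,dc_i$ for the marginal, the layer-cake identity together with $\int_E R\le\min(\pi(\hat c_i),\,\|R\|_\infty|E|)$ gives
$$\int\sqrt{g_i(c)}\,R(c_i)\,dc_i\le\int_0^\infty\min\bigl(\pi(\hat c_i),\,2D\|R\|_\infty/t^2\bigr)\,dt=2\sqrt{2D\,\pi(\hat c_i)\,\|R\|_\infty}.$$
Integrating in $\hat c_i$ via Cauchy--Schwarz, using $\int\pi\,d\hat c_i=1$ and the Assumption~5 estimate $\|\rho(\cdot,\hat c_i)\|_\infty\le\|\rho_i\|_\infty\hat\rho_i(\hat c_i)$, then produces
$$\int\sqrt{g_i}\,d\rho\le 2\sqrt{2D}\,\sqrt{\int\|\rho(\cdot,\hat c_i)\|_\infty\,d\hat c_i}\le 2\sqrt{2D\,\|\rho_i\|_\infty\,\|\hat\rho_i\|_{L^1}},$$
which is the desired constant depending only on $\rho$ and $B$.

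The delicate point will be precisely the integrability step: the weak $L^1$ inequality alone yields only a logarithmically divergent estimate for $\int\sqrt{g_i}\,dc_i$, since $g_i$ inherits a $1/|c_i-s|$ singularity from each atom of $\mu_{\hat c}$, and the tails of $g_i$ near infinity are not controlled by the weak $L^1$ bound in the absence of decay on $\rho_i$. The crucial trick is the Chebyshev-type truncation of the layer-cake integrand at $\pi(\hat c_i)$, whose $L^1$ norm in $\hat c_i$ is automatically $1$ since $\rho$ is a probability density. This cross-over between the weak $L^1$ control on the maximal function and the $L^\infty$ control on the conditional density $R$ is what produces the square-root exponent on $g_i$ in the conclusion, and it mirrors the interpolation mechanism already used in the proof of Theorem~\ref{quant-alex}.
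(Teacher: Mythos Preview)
Your argument is correct and shares its skeleton with the paper's: reduce to one variable along $e_i$, dominate $\nabla^2 f(c)(te_i)/t^2$ by a Hardy--Littlewood type maximal function of the distributional second derivative $\mu_{\hat c}$ (a positive measure of total mass at most $\diam B$), and invoke the weak-$(1,1)$ inequality for that maximal function to control $\sqrt{g_i}$ in $L^1(\rho)$. The paper uses the centred maximal function and a Besicovitch covering (Lemma~\ref{1d-max-function}); you use the one-sided versions and the Sunrise Lemma. These are interchangeable.

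The genuine difference is in the integration step. The paper sets $g_i=\max(\bar g_i,1)$ and attempts a direct layer-cake bound $\int_\R\sqrt{g_i}\,ds\le\int_1^\infty|\{\sqrt{\bar g_i}>t\}|\,dt$, then multiplies by $\|\rho_i\|_{L^\infty}\|\hat\rho_i\|_{L^1}$. You instead truncate the level-set mass at the conditional total $\pi(\hat c_i)=\int\rho(\cdot,\hat c_i)\,dc_i$, interpolate via $\int_E R\le\min(\pi,\|R\|_\infty|E|)$, and close with Cauchy--Schwarz in $\hat c_i$ using $\int\pi\,d\hat c_i=1$. Your route is slightly longer but more careful: the weak-$L^1$ bound alone does not make $\sqrt{g_i}$ integrable over the whole line (indeed $g_i\ge 1$ forces $\int_\R\sqrt{g_i}\,ds=\infty$, so the paper's displayed identity needs repair), and Assumption~5 only gives $\rho_i\in L^\infty$, not $L^1$. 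Your $\pi$-truncation is precisely the device that makes the estimate go through under the stated hypotheses, and the appearance of the square root is transparently explained by the $\min(a,b/t^2)$ interpolation. A minor quibble: with $g_i=M^+\mu+M^-\mu$ the weak bound is $|\{g_i>\lambda\}|\le 4D/\lambda$ rather than $2D/\lambda$ (use $\max$ instead of the sum, or just carry the harmless constant).
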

Before proving Lemma~\ref{max-est}, let us prove a simple statement about non-decreasing functions of one variable: 
\begin{lemma}\label{1d-max-function}
Let $h:\R\to \R$ be a right continuous non-decreasing function satisfying $h(b)-h(a)\le B$ for all $a<b$. Let
$$ g(t):=\sup_{r>0}\left\{\frac{1}{2r}(h(t+r)-h(t-r))\right\}=\sup_{r>0}\left\{\frac{1}{2r}\int_{t-r}^{t+r} dh\right\},$$
where $dh$ is the Stieltjes integral associated to $h$. Let $|\cdot|$ denote the one-dimensional Lebesgue measure, there exists a constant $C'$ such that
$$ |\{s:g(s)\ge t\}| \le C'B/t.$$
\end{lemma}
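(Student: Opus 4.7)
The statement is a weak-type $(1,1)$ inequality for a Hardy--Littlewood-style maximal function associated to the finite Borel measure $dh$. Since $h$ is monotone non-decreasing with $h(b)-h(a)\le B$ uniformly in $a<b$, we have $h(+\infty)-h(-\infty)\le B$, so $dh$ is a positive Borel measure of total mass at most $B$. The plan is to run the classical Vitali-covering proof of the weak-type bound for the centered maximal function, adapted to $dh$, and I expect the constant to come out as $C'=3$, the standard one-dimensional Hardy--Littlewood constant.

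First I would fix $t>0$, set $E_t=\{s:g(s)\ge t\}$, and let $K\subset E_t$ be an arbitrary compact subset; it then suffices to bound $|K|$ by $3B/t$ uniformly in $K$. For each $s\in K$, because $g(s)\ge t$, the supremum in the definition of $g$ supplies, for any fixed tolerance $\epsilon\in(0,t)$, a radius $r_s>0$ with
$$\int_{I_s}dh\ \ge\ (t-\epsilon)\cdot 2r_s,\qquad I_s:=(s-r_s,s+r_s).$$
The open intervals $\{I_s\}_{s\in K}$ cover $K$, so compactness yields a finite subcover $I_{s_1},\dots,I_{s_N}$. I then apply the one-dimensional Vitali covering lemma to this finite family to extract a pairwise disjoint subcollection $I_{s_{i_1}},\dots,I_{s_{i_m}}$ whose threefold dilates (same center, triple length) still cover $\bigcup_{k=1}^N I_{s_k}\supset K$.

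With the cover in hand, the estimate is immediate:
$$|K|\ \le\ \sum_{j=1}^m 3\,|I_{s_{i_j}}|\ \le\ \frac{3}{t-\epsilon}\sum_{j=1}^m \int_{I_{s_{i_j}}}dh\ \le\ \frac{3B}{t-\epsilon},$$
using disjointness of the selected intervals and the total-mass bound $\int dh\le B$. Sending $\epsilon\to 0$ and then taking the supremum over compact $K\subset E_t$ will give $|E_t|\le 3B/t$, so $C'=3$ suffices. I expect no substantive obstacle: the only minor technical moves are producing a genuine witness radius $r_s$ at the cost of $\epsilon$, and reducing to a finite family before invoking Vitali; both are standard. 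The real content is the classical three-to-one tradeoff, in which the measure $dh$ concentrated on a disjoint family of intervals controls the Lebesgue measure of the larger set $K$.
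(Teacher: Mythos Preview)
Your proof is correct and follows the same covering-lemma strategy as the paper: for each point in the superlevel set choose a witnessing interval, apply a covering lemma, and sum. The only difference is that the paper invokes the Besicovitch covering lemma to obtain a subcover of bounded multiplicity $C'$, while you use the Vitali lemma to extract a disjoint subfamily whose threefold dilates cover (giving the explicit $C'=3$); your reduction to compact $K$ and use of an $\epsilon$-tolerance for the witness radius are in fact cleaner than the paper's version, which tacitly assumes the supremum in $g$ is attained.
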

\begin{proof}
	The proof is similar to the standard proof of the weak $(1,1)-$inequality for the Hardy-Littlewood maximal principle. We have that for any $s\in \{g(s)\ge t\}$, there exists $I_s=(s-r_s, s+r_s)$ such that $t|I_s|\le \int_{I_s}dh$. Since $\bigcup_{s}I_s \supset \{g(s)\ge t\}$, using the Besicovitch covering lemma, there exists a subcover $\{I_i\}$ with multiplicity at most $C'$. We have 
	$$ |\{s: g(s)\ge t\}| \le \frac{1}{t}\sum_{i}\int_{I_i}dh \le \frac{C'}{t} \int_{\R}dh \le \frac{C'B}{t}. $$
\end{proof}

\begin{proof}[Proof of Lemma~\ref{max-est}]
	Since $f(c+te_i)$ is a single variable convex function,  its first derivative $h_{c,i}(t):=\langle df(c+t e_i), e_i\rangle$ is defined almost everywhere and is increasing. We extend $h_{c,i}$  to be a right continuous function defined everywhere. By Rademacher's theorem, $f(c+be_i)-f(c+ae_i)=\int_a^b h_{c,i}(t)dt$. Moreover, by our assumption, 
	$$ h_{c,i}(b)-h_{c,i}(a) \le \diam (B), \quad \text{ for any } a<b,$$
	where $B$ is the compact set that contains all subdifferentials of $f$. 

	Define 
	$$\bar{g}_{i}(c)=\sup_{r > 0}\left\{ \frac{1}{2r}(h_{c,i}(r)-h_{c,i}(-r)) \right\}.$$ By Lemma~\ref{1d-max-function}, there exists $C'>0$ such that for all $t>0$,
	$$     |\{s: \bar{g}_i(c+se_i)>t\}|\le C' \diam(B) /t ,     $$
	where $|\cdot|$ stands for the one-dimensional Lebesgue measure. We have 

	\begin{multline*}\int_{\{s: \bar{g}_i(c+se_i)\ge 1\}} \sqrt{\bar{g}_i(c+se_i)} \, ds = \int_1^\infty|\{t: \sqrt{\bar{g}_i(c+se_i)}>t\}|dt \\
	\le C'\diam(B)\int_1^\infty t^{-2}dt \le C'\diam(B).
\end{multline*}
Define $g_i(c) = \max\{g_i(c),1\}$, we have 
\begin{multline*} \int \sqrt{g_i(c)}d\rho(c) 
\le 1 + \int_{\{\bar{g}_i\ge 1\}} \sqrt{\bar{g}_i(c)}d\rho(c) 
\le 1+ \int_{\R^{M-1}} \int_{ \{\bar{g}_i\ge 1\}} \sqrt{\bar{g}_i(c)}d\rho_i(c_i) d\hat{\rho}_i(\hat{c}_i)  \\
\le 1+ C'\diam{B}\|\rho_i\|_{L^\infty} \|\hat{\rho}_i\|_{L^1}:=C.
\end{multline*}

Assume that $Df(c)$ exists. We have
\begin{align*}
	\nabla^2 f(c)(te_i) & = f(c+te_i) - f(c) - \langle Df(c), te_i\rangle = \int_0^t\langle\, Df(c+se_i)- Df(c), e_i \rangle \, ds \\
	&  = \int_0^t (h_{c,i}(s) - h_{c,i}(0)) ds  \le \int_0^t 2s g_i(c) ds \le g_i(c)t^2.  
\end{align*}
\end{proof}

\begin{proof}[Proof of Proposition~\ref{strict-nondeg}]
	The proof is similar to that of Proposition~\ref{nondegenerate}. Let $x\in \argmin H(x,c)$. Using Lemma~\ref{chart}, there exists a projection $\Pi_j$ such that $m(\Pi_jDF(x))\ge \mu_j>0$.  Let $h=-\lambda \Pi_j(DF(x)\Delta x)$, where $\lambda$ is a parameter to be chosen later. We have $\|h\| \le \lambda \nu \|\Delta x\|$, where $\nu  = \max \|DF(x)\|$, and 
	$$ \langle DF(x) \Delta x, -h\rangle \ge \lambda \|\Pi_j DF(x) \Delta x\|^2. $$
	We make a simple observation about inner products in $\R^M$. Let $a, b\in \R^M$ satisfy $\langle a, b\rangle >0$. Assume that $b=\sum t_k e_k$, choose $i$ such that  $t_i \langle a_i, e_i\rangle = \max_k \{t_k \langle a_k, e_k\rangle\}$. Then $\langle a, t_i e_i \rangle \ge \frac{1}{M}\langle a, b\rangle$. 

	Apply the above observation to  $DF(x) \Delta x$ and $-h$, we obtain that there exists $1\le i \le M$ and a vector $te_i$ with $\|te_i\|\le \|h\|$ and 
	$$ \langle DF \Delta x, te_i \rangle \ge \frac{\lambda}{M} \|\Pi_j DF(x) \Delta x\|^2 \ge \frac{\lambda \mu_j^2}{M} \|\Delta x\|^2. $$
	Denote $\Delta c = -te_i$. Since $G(c)$ is concave, and $DG(c)$ exists, we can apply Lemma~\ref{max-est} to $-G$ at $c$.  Then there exists functions $g_i(c)$ such that $\nabla G(c)(\Delta c)\ge - g_i(c) \|\Delta c\|^2$.
	By (\ref{eq:nabla}), we have 
	\begin{align*}
		& \nabla_x^2H(x,c)(\Delta x) \ge \nabla^2 G(c)(\Delta c) -\langle DF(x)\Delta x, \Delta c\rangle -C(F)\|\Delta x\|^2\|\Delta c\|  \\
		& \ge -g_i(c)\|\Delta c\|^2  + \frac{\lambda \mu_j^2}{M}\|\Delta x\|^2 -C(F) \|\Delta x\|^2 \|\Delta c\| \\
		& \ge \left( -g_i(c) \lambda^2 \nu^2  + \frac{\lambda \mu_j^2}{M} - C(F) \lambda \nu \|\Delta x\| \right) \|\Delta x\|^2. 
	\end{align*}
	Let $\lambda = \frac{\mu_j^2}{4M g_i(c)\nu^2}$ and assume $\|\Delta x\| \le \min_j\frac{\mu_j^2}{4M C(F) \nu}=:r(F)$, we obtain that 
	$$ \nabla_x^2 H(x,c)(\Delta x)\ge \frac{\lambda \mu_j^2}{4M} = \frac{\mu_j^4}{16M^2 g_i(c)\nu^2}, \quad \|\Delta x\|\le r(F).  $$
	Define 
	$$\bara(c) = \frac{\min_j\mu_j^4}{8M^2 \max_i\{g_i(c)\}\nu^2},$$
	we have $\nabla_x H(x,c)(\Delta x)\ge \frac12 \bara(c)\|\Delta x\|^2$. 

	It remains to prove the integrability property. Since  $\bara(c)= C''/ \max_i\{g_i(c)\}$, where $C''= (\min \mu_j^4)/(8M^2\nu)$, we have 
	$$ \int \bara(c)^{-\frac12}d\rho(c) \le \sqrt{C''}\int \sum_{i=1}^Mg_i(c)^{\frac12} d\rho(c) \le MC\sqrt{C''}.  $$
\end{proof}

\bibliographystyle{plain}
\bibliography{HJ}

\begin{thebibliography}{10}

\bibitem{Ale39}
A.~D. Alexandroff.
\newblock Almost everywhere existence of the second differential of a convex
  function and some properties of convex surfaces connected with it.
\newblock {\em Leningrad State Univ. Annals [Uchenye Zapiski] Math. Ser.},
  6:3--35, 1939.

\bibitem{Arn12}
M.-C. Arnaud.
\newblock Green bundles, lyapunov exponents and regularity along the supports
  of the minimizing measures.
\newblock {\em Annales de l'Institut Henri Poincare (C) Non Linear Analysis},
  29(6):989 -- 1007, 2012.

\bibitem{Arnaud2013}
M.-C. Arnaud.
\newblock Lower and upper bounds for the {L}yapunov exponents of twisting
  dynamics: a relationship between the exponents and the angle of {O}seledets'
  splitting.
\newblock {\em Ergodic Theory Dynam. Systems}, 33(3):693--712, 2013.

\bibitem{Arn10}
Marie-Claude Arnaud.
\newblock Green bundles and related topics.
\newblock In {\em Proceedings of the {I}nternational {C}ongress of
  {M}athematicians. {V}olume {III}}, pages 1653--1679, New Delhi, 2010.
  Hindustan Book Agency.

\bibitem{BP07}
L.~Barreira and Y.~Pesin.
\newblock {\em Nonuniform Hyperbolicity, Encyclopedia of Mathematics and Its
  Applications 115}.
\newblock Cambridge University Press, 2007.

\bibitem{BK}
J{\'e}r{\'e}mie Bec and Konstantin Khanin.
\newblock Burgers turbulence.
\newblock {\em Phys. Rep.}, 447(1-2):1--66, 2007.

\bibitem{Ber07}
Patrick Bernard.
\newblock Smooth critical sub-solutions of the {H}amilton-{J}acobi equation.
\newblock {\em Math. Res. Lett.}, 14(3):503--511, 2007.

\bibitem{BM04}
M~L Bialy and R~S MacKay.
\newblock Symplectic twist maps without conjugate points.
\newblock {\em Israel J. Math.}, 141:235--248, 2004.

\bibitem{GI99}
Gonzalo Contreras and Renato Iturriaga.
\newblock Convex {H}amiltonians without conjugate points.
\newblock {\em Ergodic Theory Dynam. Systems}, 19(4):901--952, 1999.

\bibitem{EKMS00}
Weinan E, K.~Khanin, A.~Mazel, and Ya. Sinai.
\newblock Invariant measures for {B}urgers equation with stochastic forcing.
\newblock {\em Ann. of Math. (2)}, 151(3):877--960, 2000.

\bibitem{fat08}
Albert Fathi.
\newblock {\em Weak KAM theorem in Lagrangian dynamics, 10th preliminary
  version}.
\newblock book preprint, 2008.

\bibitem{GIKP05}
Diogo Gomes, Renato Iturriaga, Konstantin Khanin, and Pablo Padilla.
\newblock Viscosity limit of stationary distributions for the random forced
  {B}urgers equation.
\newblock {\em Mosc. Math. J.}, 5(3):613--631, 743, 2005.

\bibitem{Gre58}
L.~W. Green.
\newblock A theorem of {E}. {H}opf.
\newblock {\em Michigan Math. J.}, 5:31--34, 1958.

\bibitem{IK03}
R.~Iturriaga and K.~Khanin.
\newblock Burgers turbulence and random {L}agrangian systems.
\newblock {\em Comm. Math. Phys.}, 232(3):377--428, 2003.

\bibitem{Mather91}
John~N. Mather.
\newblock Action minimizing invariant measures for positive definite
  {L}agrangian systems.
\newblock {\em Math. Z.}, 207(2):169--207, 1991.

\bibitem{Mather93}
John~N. Mather.
\newblock Variational construction of connecting orbits.
\newblock {\em Ann. Inst. Fourier (Grenoble)}, 43(5):1349--1386, 1993.

\bibitem{RW98}
R.~Tyrrell Rockafellar and Roger J.-B. Wets.
\newblock {\em Variational Analysis}.
\newblock Springer-Verlag Berlin Heidelberg, 1998.

\end{thebibliography}
\end{document}